\newtheorem{theorem}{Theorem}[section]
\newtheorem{lemma}[theorem]{Lemma}
\newtheorem{prop}[theorem]{Proposition}
\newtheorem{cor}[theorem]{Corollary}
\theoremstyle{definition}
\newtheorem{definition}[theorem]{Definition}
\newcommand\bn{{\mathbb N}}
\newcommand\bz{{\mathbb Z}}
\newcommand\bq{{\mathbb Q}}
\newcommand\bc{{\mathbb C}}
\newcommand\ba{{\mathbb A}}
\newcommand\br{{\mathbb R}}
\newcommand\mc{{\mathcal M}}
\newcommand\dc{{\mathcal D}}
\newcommand\Ic{{\mathcal I}}
\newcommand\nc{{\mathcal N}}
\newcommand\cl{{\mathcal L}}
\newcommand\cls{{\mathcal S}}
\newcommand\Gc{{\mathcal G}}
\newcommand\rc{{\mathcal R}}
\newcommand\pc{{\mathcal P}}
\newcommand\bl{{\mathbb L}}
\newcommand\tS{{\operatorname{Supp}}}
\newcommand\ti{{\operatorname{in}}}
\newcommand\ag{{\alpha}}
\newcommand\bg{{\beta}}
\newcommand\Dg{{\Delta}}
\newcommand\mg{{\mu}}
\newcommand\sg{{\sigma}}
\newcommand\Sg{{\Sigma}}
\newcommand\lbg{{\lambda}}
\newcommand\rg{{\rho}}
\newcommand\og{{\omega}}
\newcommand\zg{{\zeta}}
\newcommand\Gg{{\gamma}}
\numberwithin{equation}{section}
\theoremstyle{remark}
\newtheorem{remark}[theorem]{Remark}
\numberwithin{equation}{section}
\title{Newton trees for ideals in two variables and applications }
\author{Pierrette Cassou-Nogu\`es}
\address{Institut de Math\'ematiques de Bordeaux, Universit\'e Bordeaux I,
350, Cours de la Lib\'eration, 33405, Talence Cedex 05, FRANCE}
\email{cassou@math.u-bordeaux1.fr}
\thanks{}
\author{Willem Veys}
\address{K.U.Leuven, Dept. Wiskunde, Celestijnenlaan 200B, 3001 Leuven, Belgium}
\curraddr{}
\email{wim.veys@wis.kuleuven.be}
\thanks{}
\thanks{
First author is partially supported  by the grants
MTM2010-21740-C02-01 and MTM2010-21740-C02-02.}
\keywords{}
\dedicatory{Dedicated to Jan Denef on the occasion of his sixtieth birthday}
\subjclass[2000]{}
\begin{document}
\begin{abstract}
We introduce an efficient way, called Newton algorithm, to study arbitrary ideals in $\bc[[x, y]]$,
using a finite succession of Newton polygons. We codify most of the data of the algorithm in a useful combinatorial object, the Newton tree.
For instance when the ideal is of finite codimension, invariants like integral closure and Hilbert-Samuel multiplicity were already combinatorially determined in the very special cases of monomial or non degenerate ideals, using the Newton polygon of the ideal.
With our approach, we can generalize these results to arbitrary ideals. In particular
the Rees valuations of the ideal will correspond
to the so-called dicritical vertices of the tree, and its Hilbert-Samuel multiplicity
has a nice and easily computable description in terms of the tree.
\end{abstract}

\maketitle

\section{Introduction}\label{sec-intro}

Let $\Ic$ be an ideal in $\bc[[x,y]]$, given by a system of
generators $\Ic=(f_1,\cdots,f_r)$.  How to compute its integral
closure $\overline{\Ic}$, its Hilbert-Samuel multiplicity $e(\Ic)$ (when $I$ is of finite codimension) and so on?

The simplest case already studied is the case of monomial ideals, which
means ideals generated by monomials. In this case,  the results are
expressed in terms of the Newton polygon of the ideal. If $f=\sum
c_{\ag,\bg}x^{\ag}y^{\bg} \in \bc[[x,y]]$, denote $\tS
f=\{(\ag,\bg)\in \bn \times \bn \mid c_{\ag,\bg}\neq 0\} .$ The
Newton polygon $\mathcal{N}(\Ic)$ is the union of the
$1$-dimensional compact faces of the convex hull $\Delta(\Ic)$ of
$\cup_i \tS f_i$. In particular the Zariski decomposition of $\overline{\Ic}$ takes the easy form
\begin{equation}\label{closure-monomial}
\overline{\Ic}=x^ky^l\prod_{S\in \mathcal{N}(\Ic)}\Ic_{(p_S,q_S)}^{\delta_S},
\end{equation}
where $S$ is a face of $\mathcal{N}(\Ic)$ with equation
$p_S\ag+q_S\bg=N_S$, with $(p_S,q_S)=1$ and $\delta_S=N_S/(p_Sq_S)$,
and $\Ic_{(p_S,q_S)}$ is the  integrally closed simple ideal such
that its Newton polygon has a unique face with equation
$p_S\ag+q_S\bg=N_S/\delta_S$. Moreover, if $\Ic$ has finite
codimension, we have
$$e(\Ic)=\sum_{S\in \mathcal{N}(\Ic)} N_S\delta_S,$$ which is twice the area of the region $\br^2_+\setminus \Delta(\Ic)$ delimited by the coordinate axes and the Newton polygon.
Such results have been generalized in the case of non degenerate
ideals of finite codimension.

In this article we prove for instance that such results can be
generalized to {\it any} ideal, if we use a finite number of Newton
polygons instead of one.
The method we use is inspired by Newton's method to find roots of
$f\in \bc[[x,y]]$. We call it Newton algorithm.  We codify the
algorithm in two ways: the Newton tree which keeps the information
of the successive Newton polygons we encounter in the algorithm, and
the Newton process which keeps the information of all the Newton
maps we use in the algorithm. From the Newton process, we can
recover the Newton tree but not the other way around. We prove that
the Newton process characterizes the integral closure of the ideal
and allows to give its Zariski decomposition, whereas the Newton
tree suffices to compute the Hilbert-Samuel multiplicity.

It should be noted that the Newton algorithm has been very efficient in the proof of the monodromy conjecture for quasiordinary power series, see \cite{ACLM1}\cite{ACLM2}.

\bigskip
The paper is organized as follows.
In the next section we introduce the Newton algorithm, as a composition of Newton maps, that transforms an arbitrary ideal $\Ic$ of
$\bc[[x,y]]$ into a principal \lq monomial-like\rq\ ideal. We associate a notion of depth to $\Ic$, being roughly the length of this algorithm,
and we say that $\Ic$ is non degenerate if the depth of $\Ic$ is at most one. In the cases where $\Ic$ is principal or of finite codimension, this is consistent with usual notions of non degeneracy. In section 3 we describe the Newton tree and its combinatorial properties. In particular we compare the Newton tree of a non-principal ideal with the Newton tree of a generic curve of that ideal. The Newton process is treated in section 4, where we show that an ideal has the same Newton process as its integral closure. In section 5 we then study several invariants of an ideal $\Ic$ of $\bc[[x,y]]$ in terms of its Newton tree or process. First we identify the Rees valuations of $\Ic$ with certain elements in the Newton tree/process. This leads to the proof of the fact that two ideals have the same Newton process if and only if they have the same integral closure, using a result of Rees and Sharp. Further we interpret Zariski's decomposition of $\Ic$, as a product of principal and  simple integrally closed ideals of finite codimension, in terms of the Newton process, generalizing (\ref{closure-monomial}). Finally we compute, when $\Ic$ has finite codimension, its multiplicity, its \L ojasiewicz exponent and its Hilbert-Samuel multiplicity $e(\Ic)$ from the Newton tree, and we show an alternative formula for $e(\Ic)$ in terms of the area defined by the successive Newton
polygons encountered in the Newton algorithm.

\section{Newton Algorithm for an ideal}

\smallskip
\subsection{Newton polygon}
For any set $E\subset \bn \times \bn$, denote by $\Delta(E)$ the smallest convex set containing
$E+\br _{+}^2=\{a+b \mid a\in E, b\in \br _{+}^2\}$. A set $\Delta \subset \br ^2$ is a { \it Newton diagram}
if there exists a set  $E\subset \bn \times \bn$ such that $\Delta=\Delta (E)$. The smallest set $E_0\subset
 \bn \times \bn$ such that $\Delta=\Delta (E_0)$ is called the set of {\em vertices} of a Newton diagram $\Delta$; it is a finite set. Let $E_0=\{v_0,\cdots, v_m\}$, with  $v_i=(\ag _i,\bg_i)\in \bn \times \bn$ for $i=0,\cdots, m$, and $\ag_{i-1}<\ag_i$, $\bg_{i-1}>\bg_i$ for $i=1,\cdots, m$. For $i\in\{1,\cdots, m\}$, denote
 $S_i=[v_{i-1},v_i]$ and by $l_{S_i}$ the line supporting the segment $S_i$. We call $\nc (\Dg)=\cup_{1\leq i\leq m} S_i$ the {\it Newton polygon} of $\Dg$ and the $S_i$ its {\it faces}. The Newton polygon $\nc (\Dg)$ is empty if and only if
 $E_0 =\{v_0=(\ag_0,\bg_0)\}$. The integer $h(\Dg)=\bg _0-\bg_m$ is called the {\it height} of $\Dg$.
 Let
 $$f(x,y)=\sum_{(\ag,\bg)\in \bn \times \bn}c_{\ag,\bg}x^{\ag}y^{\bg}\in \bc[[x,y]] .$$
 We define the support of $f$ as
 $$\tS f=\{(\ag,\bg)\in \bn \times \bn \mid c_{\ag,\bg}\neq 0\} .$$
 We denote $\Delta(f)=\Delta(\tS f)$ and $\nc (f)=\nc(\Dg (f))$.
 Let $l$ be a line in $\br^2$. We define the initial part of $f$ with respect to $l$ as
 $$\ti (f,l)=\sum_{(\ag,\bg)\in l}c_{\ag,\bg}x^{\ag}y^{\bg} .$$
 If the line $l$ has equation $p\ag+q\bg =N$, with $(p,q)\in (\bn^*)^2$ and $\gcd (p,q)=1$, then $\ti (f,l)$ is zero or a monomial or, if $l=l_S$ for some segment $S$ of $\nc (\Dg)$, of the form
 $$\ti (f,l)=x^{a_l}y^{b_l}F_S(x^q,y^p) ,$$
   where $(a_l,b_l)\in \bn^2$ and
 $$F_S(x,y)=c\prod _{1\leq i\leq n }(y-\mg _ix)^{\nu_i} ,$$
 with $c\in \bc^*$, $n\in \bn^*$, $\mg_i\in \bc^*$ (all different) and $\nu_i \in \bn^*$.

 For example if $f(x,y)=(x^2+y^3)^2+\sum_{i\geq1} x^iy^{5i}$, then $\nc(f)=\{S\}$, where $l_S$ has equation $3\alpha+2\beta=12$ and $\ti (f,l_S)=(x^2+y^3)^2$.

 \bigskip
 Now let  $\Ic=(f_1,\cdots, f_r)$ be a non-trivial ideal in $\bc[[x,y]]$. We define
 $$\Dg(\Ic)=\Dg(\cup _{1\leq i\leq r}\tS f_i)  \quad\text{and}\quad \nc(\Ic)=\nc(\Dg(\Ic)) .$$
 When $\Ic=(f)$ we simply write $\Dg(f)$ and $\nc(f)$.
For a segment $S$ of $\nc(\Ic)$ we denote by $\ti (\Ic,S)$ the ideal generated by the
 $\ti (f_i,l_S), 1\leq i\leq r$, and call it the {\it initial ideal} of $\Ic$ with respect to $S$.

\begin{lemma}
 The sets $\Dg(\Ic)$ and $\nc(\Ic)$ and the ideals  $\ti (\Ic,S)$ depend only on $\Ic$, not on a system of generators of $\Ic$.
 \end{lemma}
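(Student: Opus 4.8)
The plan is to show separately that each of the three objects—$\Dg(\Ic)$, $\nc(\Ic)$, and, for a face $S$ of $\nc(\Ic)$, the initial ideal $\ti(\Ic,S)$—is unaffected by the choice of generators. Since $\nc(\Ic)$ is by definition determined by $\Dg(\Ic)$, it suffices to prove the statement for $\Dg(\Ic)$ and for $\ti(\Ic,S)$. The key elementary fact I would use throughout is the behaviour of supports under the ring operations: for $f,g\in\bc[[x,y]]$ and $h\in\bc[[x,y]]$ one has $\tS(f+g)\subset \tS f\cup\tS g$ and $\tS(hf)\subset \tS h+\tS f$ (Minkowski sum), hence in particular $\tS(hf)\subset \tS f+\br_+^2$ when $h\in\bc[[x,y]]$, because $\tS h\subset\br_+^2$. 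Consequently, if $g=\sum_i h_if_i$ lies in $\Ic$, then $\tS g\subset \bigcup_i(\tS f_i+\br_+^2)\subset \Dg(\Ic)$.

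First I would reduce to the key claim: if $(f_1,\dots,f_r)=(g_1,\dots,g_s)$ as ideals, then $\Dg(\cup_i\tS f_i)=\Dg(\cup_j\tS g_j)$. By symmetry it is enough to show $\Dg(\cup_j\tS g_j)\subset\Dg(\cup_i\tS f_i)$. Each $g_j$ belongs to $\Ic=(f_1,\dots,f_r)$, so $g_j=\sum_i h_{ji}f_i$ for some $h_{ji}\in\bc[[x,y]]$; by the support estimate above, $\tS g_j\subset\bigcup_i(\tS f_i+\br_+^2)$, which is contained in $\Dg(\cup_i\tS f_i)$ since that set already contains $\tS f_i+\br_+^2$ for every $i$ and is convex. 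Taking the union over $j$ and then the convex hull with $\br_+^2$ added, we get $\Dg(\cup_j\tS g_j)\subset\Dg(\cup_i\tS f_i)$. The reverse inclusion is identical, and hence $\Dg(\Ic)$, and therefore $\nc(\Ic)$, is well defined.

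For the initial ideal, fix a face $S$ of $\nc(\Ic)$ with supporting line $l_S\colon p\ag+q\bg=N_S$ and write $N(h)=\min\{p\ag+q\bg\mid(\ag,\bg)\in\tS h\}$ for the $(p,q)$-order of a nonzero $h\in\bc[[x,y]]$; set $\ti_{l_S}(h)$ to be the sum of the terms of $h$ on the line $p\ag+q\bg=N(h)$, so that $\ti(h,l_S)$ equals $\ti_{l_S}(h)$ when $N(h)=N_S$ and is $0$ when $N(h)>N_S$ (and a single monomial is irrelevant when $N(h)<N_S$, which cannot happen for $h\in\Ic$ since $\Dg(\Ic)$ lies in the half-plane $p\ag+q\bg\ge N_S$). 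The point is that $\ti_{l_S}$ is multiplicative, $\ti_{l_S}(hf)=\ti_{l_S}(h)\ti_{l_S}(f)$ and $N(hf)=N(h)+N(f)$, and that it is additive on the lowest line, $\ti_{l_S}(f+g)=\ti_{l_S}(f)+\ti_{l_S}(g)$ whenever $N(f)=N(g)$ and no cancellation drops the order. Now if $g\in\Ic$ with $N(g)=N_S$, write $g=\sum_i h_if_i$; grouping the summands by their $(p,q)$-order, the terms with $N(h_if_i)>N_S$ contribute nothing to $\ti_{l_S}(g)$, while the terms with $N(h_if_i)=N_S$ satisfy $N(h_i)=N_S-N(f_i)$, so $\ti_{l_S}(h_i)$ has order $N_S-N(f_i)$ and $\ti_{l_S}(h_if_i)=\ti_{l_S}(h_i)\,\ti(f_i,l_S)$. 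Hence $\ti(g,l_S)=\ti_{l_S}(g)$ is an element of the ideal generated by the $\ti(f_i,l_S)$. This shows the ideal generated by $\{\ti(g_j,l_S)\}$ is contained in the ideal generated by $\{\ti(f_i,l_S)\}$ whenever $(g_j)=(f_i)$, and symmetry finishes the proof.

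The main obstacle is the bookkeeping in the last step: one must be careful that when passing from $g=\sum_i h_if_i$ to initial parts, the summands $h_if_i$ can have strictly larger $(p,q)$-order than $g$ and may cancel among themselves on the line $l_S$, so the naive identity $\ti_{l_S}(\sum_i h_if_i)=\sum_i\ti_{l_S}(h_i)\ti(f_i,l_S)$ fails term-by-term; the correct statement is that $\ti_{l_S}(g)$ equals the sum of those $\ti_{l_S}(h_i)\ti(f_i,l_S)$ for which $N(h_i)+N(f_i)=N_S$, and this is still visibly a member of $\bigl(\ti(f_1,l_S),\dots,\ti(f_r,l_S)\bigr)$. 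Everything else is the routine support arithmetic recorded at the start.
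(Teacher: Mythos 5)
Your proof is correct and takes essentially the same route as the paper: $\Dg(\Ic)$-invariance follows from the support containment $\tS g_j \subset \cup_i\tS f_i + \br_+^2$ plus symmetry, and for the initial ideal you recover, via the $(p,q)$-order bookkeeping, the identity the paper writes directly as $\ti(g_j,l_S)=\sum_i h_{ij}(0,0)\,\ti(f_i,l_S)$. One small remark: since $S$ is a face of $\nc(\Ic)$, every $\tS f_i$ lies in the half-plane $p\ag+q\bg\ge N_S$ while $\tS h_i\subset\br_+^2$, so the surviving indices automatically have $N(h_i)=0$; thus the coefficients $\ti_{l_S}(h_i)$ in your final expression are in fact scalars, exactly the $h_{ij}(0,0)$ of the paper, and the cancellation subtlety you flag is harmless.
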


 \begin{proof}
 If $\Ic=(f_1,\cdots,f_m)=(g_1,\cdots ,g_s)$, then we have for $j=1,\cdots ,s$ that
 $$g_j(x,y)=\sum_ih_{ij}(x,y)f_i(x,y) ,$$
 where the $h_{ij} \in \bc[[x,y]]$. Then
 $$\tS \, g_j \subset \cup _i \tS f_i +\br_+^2$$
 for all $j$, and hence
 $$\cup _j \tS \, g_j +\br_+^2 \subset \cup _i \tS f_i +\br_+^2 .$$
 By symmetry we can conclude that indeed
 $$\cup _j \tS \, g_j +\br_+^2 = \cup _i \tS f_i +\br_+^2 .$$

 Moreover, since any such segment $S$ is a face of  $\nc(\Ic)$,
 we have for $j=1,\cdots, s$ that
 $$\ti (g_j,l_S)=\sum_i h_{ij}(0,0) \, \ti(f_i,l_S) ,$$
 and we can conclude analogously.
 \end{proof}

\begin{remark}
The proof of the lemma shows that $\Dg(\Ic)=\Dg(\cup _{f\in \Ic}\tS f)$.
\end{remark}

 \begin{remark}
 The Newton polygon of an ideal is empty if and only if the ideal is principal, generated by a monomial.
 \end{remark}


\medskip
 Let $S$ be a face of $\nc(\Ic)$ and $p_S\ag+q_S\bg =N_S$ be the equation of $l_S$, with $\gcd (p_S,q_S)=1$ as before. Then $\ti(\Ic,S)$ is of the form
 \begin{equation}\label{initialideal1}
 \ti(\Ic,S)=\left(x^{a_S}y^{b_S}F_{\Ic,S}(x^{q_S},y^{p_S})\right)
 \end{equation}
 or
 \begin{equation}\label{initialideal2}
 \ti(\Ic,S)=x^{a_S}y^{b_S}F_{\Ic,S}(x^{q_S},y^{p_S})\big(k_1(x^{q_S},y^{p_S}),\cdots,k_s(x^{q_S},y^{p_S})\big)
 \end{equation}
 with $s \geq 2$,
 where $F_{\Ic,S},k_1,\cdots,k_s$ are homogeneous polynomials, $F_{\Ic,S}$ is not divisible by $x$ or $y$  and $k_1,\cdots, k_s$ are coprime and of the same degree $d_S$. In the first case we put $d_S=0$.
 The polynomial $F_{\Ic,S}$, monic in $y$,  is called the {\it face polynomial} (it can be identically one). A face $S$ is called a {\it dicritical face} if $\ti(\Ic,S)$ is not a principal ideal.  Thus it is dicritical if and only if $d_S\geq 1$.

 The following equality is an immediate consequence of the definitions.

\begin{lemma}
Let $\Ic$ be a non-trivial ideal in $\bc[[x,y]]$. Write the face polynomial of each face $S$ of its Newton polygon  $\nc(\Ic)$ in the form
$$
F_{\Ic,S}(x,y)=\prod_{1\leq i\leq n_S }(y-\mg_{S,i}x)^{\nu_{S,i}}
,$$ where $n_S\in \bn$, $\mg_{S,i}\in \bc^*$ (all different) and
$\nu_{S,i} \in \bn^*$. Then the height $h$ of  $\Dg(\Ic)$ satisfies

\begin{equation}\label{hauteur0}
h=\sum _S p_S \left( d_S+\sum_{1\leq i\leq n_S }\nu_{S,i}\right) .
\end{equation}
\end{lemma}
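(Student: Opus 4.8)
The plan is to reduce the identity to a statement about a single face, and then read off both sides from the two presentations of the initial ideal. Writing $v_i=(\ag_i,\bg_i)$ and $S_i=[v_{i-1},v_i]$ as in the definition, we have $h=\bg_0-\bg_m=\sum_{i=1}^m(\bg_{i-1}-\bg_i)$, so it suffices to prove, for a face $S=S_i$ with $l_S\colon p_S\ag+q_S\bg=N_S$ and $\gcd(p_S,q_S)=1$, that $\bg_{i-1}-\bg_i=p_S\bigl(d_S+\sum_{j}\nu_{S,j}\bigr)$. Since the primitive direction vector of $l_S$ is $(q_S,-p_S)$ and $v_i-v_{i-1}$ is a positive integral multiple of it, one has $\bg_{i-1}-\bg_i=p_S\,L_S$, where $L_S\in\bn$ is the lattice length of $S_i$; and since $F_{\Ic,S}=\prod_j(y-\mg_{S,j}x)^{\nu_{S,j}}$ has degree $\sum_j\nu_{S,j}$, the claim becomes $L_S=d_S+\deg F_{\Ic,S}$.

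Next I would compute $L_S$, i.e. the span of $\bg$-values occurring among the monomials in (a generating set of) $\ti(\Ic,S)$, by comparing two generating sets. Write $\ti(\Ic,S)=(g_1,\dots,g_r)$ with $g_j=\ti(f_j,l_S)$, and, in the form (\ref{initialideal1})--(\ref{initialideal2}), $\ti(\Ic,S)=(G_1,\dots,G_s)$ with $G_l=x^{a_S}y^{b_S}F_{\Ic,S}(x^{q_S},y^{p_S})k_l(x^{q_S},y^{p_S})$, case (\ref{initialideal1}) being the degenerate subcase $s=1$, $k_1=1$, $d_S=0$. All the $g_j$ and all the $G_l$ are quasi-homogeneous of $(p_S,q_S)$-weighted degree $N_S$. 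The one technical ingredient is the elementary observation that the lowest weighted-degree part of any element of $\ti(\Ic,S)$ is a $\bc$-linear combination of the generators: if $P=\sum_\mu h_\mu Q_\mu$ in $\bc[[x,y]]$ with each $Q_\mu$ quasi-homogeneous of weighted degree $N_S$ and $P$ likewise, then comparing weighted-degree-$N_S$ parts and using $\min(p_S,q_S)\ge 1$ gives $P=\sum_\mu h_\mu(0,0)Q_\mu$. Applying this in both directions between the $g_j$ and the $G_l$ shows $\bigcup_j\tS g_j=\bigcup_l\tS G_l$. Now $\bigcup_j\tS g_j\subseteq l_S\cap\Dg(\Ic)=S_i$, so all occurring $\bg$-values lie in $[\bg_i,\bg_{i-1}]$; conversely $v_{i-1}$ and $v_i$ are vertices of $\Dg(\Ic)=\Dg(\cup_j\tS f_j)$, hence lie in some $\tS f_j$ and therefore, being on $l_S$, in $\tS g_j$, so the values $\bg_{i-1}$ and $\bg_i$ are both attained. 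Thus the $\bg$-span of $\ti(\Ic,S)$ is exactly $\bg_{i-1}-\bg_i$.

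Finally I would compute the same $\bg$-span directly from the $G_l$. Since $F_{\Ic,S}k_l$ is homogeneous of degree $\deg F_{\Ic,S}+d_S$, the $\bg$-values in $G_l$ are among $b_S+tp_S$ with $0\le t\le\deg F_{\Ic,S}+d_S$, and the extreme values $t=0$ and $t=\deg F_{\Ic,S}+d_S$ are realized for suitable $l$: indeed $F_{\Ic,S}$ is divisible by neither $x$ nor $y$, so its top and bottom homogeneous coefficients are nonzero, while the $k_l$ are coprime homogeneous of common degree $d_S$, so they are not all divisible by $x$, nor all by $y$. Hence the $\bg$-span of $\bigcup_l\tS G_l$ equals $p_S(\deg F_{\Ic,S}+d_S)$, and comparison with the previous step yields $\bg_{i-1}-\bg_i=p_S\bigl(d_S+\sum_j\nu_{S,j}\bigr)$; summing over $i$ gives (\ref{hauteur0}). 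I expect the only genuine obstacle to be the middle step, namely pinning the $\bg$-span of $\ti(\Ic,S)$ to $\bg_{i-1}-\bg_i$: this combines the confinement of the supports of the generators to the segment $S_i$ with the fact that the endpoints $v_{i-1},v_i$ are honest support points of the $f_j$, glued together by the quasi-homogeneity argument relating the two generating sets. Everything else is bookkeeping with the line $l_S$ and with degrees of homogeneous forms.
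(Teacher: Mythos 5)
Your argument is correct, and it fills in what the paper treats as "an immediate consequence of the definitions": the paper gives no proof of this lemma. The core of your argument---that the union of supports of the quasi\hyp{}homogeneous degree-$N_S$ generators of $\ti(\Ic,S)$ is intrinsic to the ideal (since the degree-$N_S$ graded piece of the ideal is the $\bc$-span of those generators, for any choice of generators of that weighted degree), and that this support spans exactly from $v_{i-1}$ to $v_i$ on one hand and has $\bg$-span $p_S(\deg F_{\Ic,S}+d_S)$ on the other---is exactly the right way to make rigorous the ``immediate'' comparison between the lattice length of the face and the degrees in the factored form of the initial ideal. The only thing I would tighten is the phrasing about comparing ``two generating sets'': what you really establish, and what you use, is that $\bigcup_j \tS g_j = \bigcup_l \tS G_l$ because both equal $\bigcup_{P\in V}\tS P$ where $V$ is the $\bc$-span; stating it that way avoids any suggestion that you are merely matching the generators up pairwise.
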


\medskip

\medskip
 \subsection{Newton maps}

 \begin{definition}
 Let $(p,q)\in (\bn^*)^2$ with $\gcd (p,q)=1$. Take $(p',q')\in \bn^2$ such that $pp'-qq'=1$. Let $\mu\in \bc^*$. Define
$$ \begin{matrix}
 &\sg_{(p,q,\mu)}&:&\bc[[x,y]]&\longrightarrow&\bc[[x_1,y_1]]\\
 &&&f(x,y)&\mapsto&f(\mu^{q'}x_1^p,x_1^q(y_1+\mu^{p'})) .
 \end{matrix}$$
 We say that the map $\sg_{(p,q,\mu)}$ is a {\it Newton map}.
  \end{definition}

\begin{remark} (1) The numbers $(p',q')$ are introduced only to avoid taking roots of complex numbers.

(2) Let $(p',q')$ be such that $pp'-qq'=1$. For $i\in \bn$, we have
  $$f\left(\mu^{q'+ip}x_1^p,x_1^q(y_1+\mu^{p'+iq})\right)=f\left(\mu^{q'}(\mu^i x_1)^p,(\mu^i x_1)^q((\mu^{-iq}y_1)+\mu^{p'})\right) ,$$
  which shows that the change of $(p',q')$ into $(p'+iq,q'+ip)$ corresponds to the change of coordinates
  $(x_1,y_1)\mapsto (\mu ^i x_1,\mu^{-iq} y_1)$.
\end{remark}

In the sequel we will always assume that $p'\leq q$ and $q'<p$. This will make procedures canonical.

  \begin{lemma}\label{Newton-alg}
  Let $f(x,y)\in \bc[[x,y]]$, $f\neq 0$ and $\sg_{(p,q,\mu)}(f)(x_1,y_1)=f_1(x_1,y_1)\in \bc[[x_1,y_1]]$.
  \begin{enumerate}
  \item If there does not exist a face $S$ of $\nc(f)$ whose supporting line has equation $p\ag +q\bg=k$ with $k\in \bn$, then
  $$f_1(x_1,y_1)=x_1^m u(x_1,y_1) $$
  with $m\in \bn$, $u(x_1,y_1)\in \bc[[x_1,y_1]]$ and $u(0,0)\neq 0$.
  \item If there exists a face  $S$ of $\nc(f)$ whose supporting line has equation $p\ag +q\bg =k_0$ for
  some $k_0 \in \bn$,  and if $F_{S}(1,\mu)\neq 0$, then
  $$f_1(x_1,y_1)=x_1^{k_0} u(x_1,y_1) $$
  with $ u(x_1,y_1)\in \bc[[x_1,y_1]]$ and $u(0,0)\neq 0$.
  \item If there exists a face $S$ of $\nc(f)$ whose supporting line has equation $p\ag +q\bg =k_0$ for
  some $k_0 \in \bn$,  and if $F_{S}(1,\mu)= 0$, then
  $$f_1(x_1,y_1)=x_1^{k_0} g_1(x_1,y_1)$$
  with $ g_1(x_1,y_1)\in \bc[[x_1,y_1]]$ and $g_1(0,0)= 0,g_1(0,y_1)\neq 0$.
  \end{enumerate}
  \end{lemma}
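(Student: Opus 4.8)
The claim is really a direct computation with the Newton map $\sigma_{(p,q,\mu)}$, so the plan is to substitute $x=\mu^{q'}x_1^p$ and $y=x_1^q(y_1+\mu^{p'})$ into $f(x,y)=\sum c_{\alpha,\beta}x^\alpha y^\beta$ and track the powers of $x_1$ that appear. A single monomial $c_{\alpha,\beta}x^\alpha y^\beta$ is sent to $c_{\alpha,\beta}\mu^{q'\alpha}x_1^{p\alpha+q\beta}(y_1+\mu^{p'})^\beta$, so the exponent of $x_1$ coming from that monomial is exactly $p\alpha+q\beta$, the value of the linear form $p\alpha+q\beta$ on the lattice point $(\alpha,\beta)\in\operatorname{Supp}f$. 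Hence the $x_1$-adic order of $f_1$ is governed by $k_0:=\min\{p\alpha+q\beta\mid(\alpha,\beta)\in\operatorname{Supp}f\}$, and the terms contributing to $x_1^{k_0}$ are precisely those $(\alpha,\beta)$ on the line $l:\ p\alpha+q\beta=k_0$, i.e. the terms of $\ti(f,l)$.

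First I would factor $f_1=x_1^{k_0}g_1(x_1,y_1)$ with $g_1\in\bc[[x_1,y_1]]$ (using $p,q>0$ so the minimum is attained and finite, and using that every exponent $p\alpha+q\beta\geq k_0$). Then I would compute $g_1(0,y_1)$: setting $x_1=0$ kills every monomial with $p\alpha+q\beta>k_0$, so $g_1(0,y_1)=\sum_{(\alpha,\beta)\in l}c_{\alpha,\beta}\mu^{q'\alpha}(y_1+\mu^{p'})^\beta$, which is the image of $\ti(f,l)$ divided by $x_1^{k_0}$. Now the three cases split according to the shape of $\ti(f,l)$:

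\textbf{(1)} If no face $S$ of $\nc(f)$ lies on a line $p\alpha+q\beta=k$ with $k\in\bn$, then the minimum $k_0$ of the form $p\alpha+q\beta$ over $\operatorname{Supp}f$ is attained at a \emph{single} vertex $v_j=(\alpha_j,\beta_j)$ of the Newton diagram (if it were attained along a segment, that segment would be a face on such a line, contradiction; and the slope $-p/q$ being irrational relative to all face slopes forces a vertex). Then $\ti(f,l)$ is the monomial $c_{\alpha_j,\beta_j}x^{\alpha_j}y^{\beta_j}$, so $g_1(0,y_1)=c_{\alpha_j,\beta_j}\mu^{q'\alpha_j}(y_1+\mu^{p'})^{\beta_j}$, which is a nonzero constant times $(y_1+\mu^{p'})^{\beta_j}$; in particular $g_1(0,0)=c_{\alpha_j,\beta_j}\mu^{q'\alpha_j}(\mu^{p'})^{\beta_j}\neq 0$, so $u:=g_1$ is a unit and $m=k_0$.

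\textbf{(2) and (3)} If such a face $S$ exists on the line $p\alpha+q\beta=k_0$ (note this forced value is the minimal one, since $\nc(f)$ is the "lower-left" boundary and $p,q>0$), then $\ti(f,l_S)=x^{a_l}y^{b_l}F_S(x^q,y^p)$ with $F_S(x,y)=c\prod_i(y-\mu_ix)^{\nu_i}$, and therefore
$$g_1(0,y_1)=c\,\mu^{q'a_l}\,\mu^{(p'/1)\cdot(\ldots)}\cdot\text{(monomial factor)}\cdot\prod_i\bigl((y_1+\mu^{p'})^p-\mu_i(\mu^{q'})^{\ldots}\bigr)^{\nu_i},$$
but the clean way to say it is: up to a nonzero constant, $g_1(0,y_1)=F_S(1,\mu)\cdot(\text{something})+\dots$; more precisely evaluating the homogeneous piece $F_S(x^q,y^p)$ along the substitution and then at $x_1=0$ yields a nonzero constant multiple of $\prod_i\bigl(\mu^{pp'}-\mu_i\mu^{qq'}\bigr)^{\nu_i}$ times $(y_1+\mu^{p'})^{\text{(power)}}$, and $\mu^{pp'}-\mu_i\mu^{qq'}=\mu^{qq'}(\mu-\mu_i)$ using $pp'-qq'=1$, so this product is a nonzero scalar multiple of $F_S(1,\mu)$. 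Consequently $g_1(0,0)$ is a nonzero scalar times $F_S(1,\mu)$ (the factor $(y_1+\mu^{p'})^{\text{power}}$ is a unit). Thus if $F_S(1,\mu)\neq 0$ then $g_1(0,0)\neq 0$, giving (2) with $u=g_1$; and if $F_S(1,\mu)=0$ then $g_1(0,0)=0$ while $g_1(0,y_1)\neq 0$ as a polynomial in $y_1$ (it still has the nonzero factor $(y_1+\mu^{p'})^{\text{power}}$ and, after dividing out vanishing factors corresponding to roots $\mu_i=\mu$, is not identically zero), giving (3).

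\textbf{Main obstacle.} The only delicate point is bookkeeping of the monomial prefactors $x^{a_l}y^{b_l}$ and the constants $\mu^{q'\alpha}$, and checking that after the substitution $y\mapsto x_1^q(y_1+\mu^{p'})$ and setting $x_1=0$ one indeed gets the factor $(y_1+\mu^{p'})^{\text{positive power}}$ (a unit in $\bc[[y_1]]$) rather than something that could vanish at $y_1=0$ — this is what guarantees $g_1(0,y_1)\neq 0$ in case (3) and is where the hypothesis $\mu\in\bc^*$ (not just $\mu\in\bc$) is used. I would handle this by writing $\ti(f,l_S)$ out explicitly as in the displayed form before Lemma \ref{Newton-alg}, substituting, and collecting the lowest power of $x_1$; the identity $pp'-qq'=1$ is exactly what makes the $\mu$-powers combine into $\mu$-times-$(\mu-\mu_i)$, so that $F_S(1,\mu)$ emerges naturally.
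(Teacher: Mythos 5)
Your proposal is correct and follows essentially the same line of argument as the paper: expand $f$ by level sets of the linear form $p\alpha+q\beta$, identify the minimal level $k_0$, factor out $x_1^{k_0}$, compute $g_1(0,y_1)$ from $\ti(f,l)$, and use $pp'-qq'=1$ to recognize $\mu^{pp'}-\mu_i\mu^{qq'}=\mu^{qq'}(\mu-\mu_i)$ so that $g_1(0,0)$ is a nonzero scalar multiple of $F_S(1,\mu)$. The only cosmetic issue is the phrasing in case (1) (``slope irrational relative to all face slopes'' should just read that no face of $\nc(f)$ has slope $-p/q$, so the minimizing face of $\Delta(f)$ is a vertex), but the underlying argument matches the paper's.
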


  \begin{proof}
  Let
  $$f(x,y)=\sum_{(\ag,\bg)\in \bn^2} c_{\ag,\bg}x^{\ag}y^{\bg}\in \bc[[x,y]] ;$$
we write
 $$f(x,y)=\sum_{k\in \bn}\sum_{p\ag+q\bg=k}c_{\ag,\bg}x^{\ag}y^{\bg} .$$
 We have for all $k\in \bn$ that
 $$\sum_{p\ag+q\bg=k}c_{\ag,\bg}(\mu^{q'}x_1^p)^{\ag}(x_1^q(y_1+\mu^{p'}))^{\bg}=x_1^k\sum_{p\ag+q\bg=k}c_{\ag,\bg}\mu^{q'\ag}(y_1+\mu^{p'})^{\bg} .$$
 If $k_0$ is the smallest $k$ such that
 $\sum_{p\ag+q\bg=k}c_{\ag,\bg}x^{\ag}y^{\bg}\neq 0$ (that is, such that at least one $c_{\ag,\bg}$ is nonzero),
 then
$$ f_1(x_1,y_1)=x_1^{k_0}\left(\sum_{k\geq k_0}x_1^{k-k_0}\sum_{p\ag+q\bg=k}c_{\ag,\bg}\mu^{q'\ag}(y_1+\mu^{p'})^{\bg}\right) .$$

If there does not exist a face of $\nc(f)$ whose supporting line has equation $ p\ag+ q\bg=k_0$, then
for $k=k_0$ there is only one nonzero $c_{\ag,\bg}$ in the sum above, hence
$$f_1(x_1,y_1) = c_{\ag,\bg}x_1^{k_0}\mu^{q'\ag}(y_1+\mu^{p'})^{\bg} + x_1^{k_0+1} (\cdots)$$
and, since $\mu\neq 0$, we see that $f_1(x_1,y_1)=x_1^{k_0}u(x_1,y_1)$ with $u(0,0)\neq 0$.

If there exists a face $S$ of $\nc(f)$ whose supporting line has equation $ p\ag+ q\bg=k_0$, we write $\ti (f,l_S)=x^{a}y^{b}F_S(x^q,y^p)$ as above and $F_S(x,y)= c \prod_i (y-\mu_i x)^{\nu_i}$.   Then $f_1(x_1,y_1)$ is of the form
$$f_1(x_1,y_1)=x_1^{k_0}\left( c\mu^{q'a}(y_1+\mu^{p'})^b F_{S}(\mu^{qq'},(y_1+\mu^{p'})^p)+ \sum_{k> k_0}x_1^{k-k_0}\sum_{p\ag+q\bg=k}c_{\ag,\bg}\mu^{q'\ag}(y_1+\mu^{p'})^{\bg}) \right),$$
where
$$F_{S}(\mu^{qq'},(y_1+\mu^{p'})^p)=\prod_i((y_1+\mu^{p'})^p-\mu_{i}\mu^{qq'})^{\nu_{i}}=
F_{S}(1,\mu)\mu^{qq'\sum_i\nu_{i}}+y_1G_S(y_1)$$
with $G_S(y_1)\in \bc[y_1]$ and $G_S(y_1)\neq 0$.
So, if $F_{S}(1,\mu)\neq 0$, then
$f_1(x_1,y_1)=x_1^{k_0}u(x_1,y_1)$ with $u(0,0)\neq 0$, and if $F_{S}(1,\mu)= 0$, then
$f_1(x_1,y_1)=x_1^{k_0}g_1(x_1,y_1)$ with $g_1(0,0)= 0$.

Note that $F_{S}(1,\mu)= 0$ if and only if $\mu=\mu_{i}$ for some $i$ and $g_1(0,y_1)=y_1^{\nu_{i}}+\cdots $, where $\cdots$ means higher degree terms in $y_1$.
\end{proof}

\begin{remark}
In the first and second case of Lemma \ref{Newton-alg}, the Newton polygon of $f_1$ is empty. In the third case,  the height of the
Newton diagram of $f_1$ is less than or equal to the multiplicity of $\mu$ as root of $F_S(1,X)$.
\end{remark}

Let $\Ic=(f_1,\cdots, f_r)$ be a non-trivial ideal in $\bc[[x,y]]$. Let $\sg_{(p,q,\mu)}$ be a Newton map. We denote by
$\sg_{(p,q,\mu)}(\Ic)$ the ideal in $\bc[[x_1,y_1]]$  generated by the $\sg_{(p,q,\mu)}(f_i)$ for $i=1,\cdots, r$. Since a Newton map is a ring homomorphism, this ideal does not depend on the choice of the generators of $\Ic$.

\begin{lemma}\label{Newton-algbis}
Let $\Ic$ be a non-trivial ideal in $ \bc[[x,y]]$ and
$\sg_{(p,q,\mu)}(\Ic)=\Ic_1$.
\begin{enumerate}
\item If there does not exist a face $S$ of $\nc(\Ic)$ whose supporting line has equation $p\ag +q\bg=N$ with $N\in \bn$,  then the ideal $\Ic_1$ is principal,
generated by a power of $x_1$.
\item If there exists a face  $S$ of $\nc(\Ic)$ whose supporting line has equation $p\ag +q\bg =N_0$ for
  some $N_0 \in \bn$,  and if $F_{\Ic, S}(1,\mu)\neq 0$, then $\Ic_1=(x_1^{N_0})$.
\item If there exists a face $S$ of $\nc(\Ic)$ whose supporting line has equation $p\ag +q\bg =N_0$ for
  some $N_0 \in \bn$,  and if $F_{\Ic, S}(1,\mu)= 0$, then $\Ic_1=(x_1^{N_0})\Ic _1'$ and the height of the Newton polygon of $\Ic_1$ is less than or equal to the multiplicity of $\mu$ as root of $F_{\Ic, S}(1,X)$,
    \end{enumerate}
  \end{lemma}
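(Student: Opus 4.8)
The plan is to reduce the three statements to the function case, i.e.\ to Lemma~\ref{Newton-alg} applied to the individual generators of $\Ic=(f_1,\dots,f_r)$. Writing
$$\sg_{(p,q,\mu)}(f_i)(x_1,y_1)=\sum_{(\ag,\bg)\in\tS f_i}c^{(i)}_{\ag,\bg}\,\mu^{q'\ag}\,x_1^{p\ag+q\bg}\,(y_1+\mu^{p'})^{\bg},$$
one sees that the order of $\sg(f_i)$ in $x_1$ equals $k_i:=\min\{p\ag+q\bg\mid(\ag,\bg)\in\tS f_i\}\ge N_0$, where $N_0:=\min\{p\ag+q\bg\mid(\ag,\bg)\in\Dg(\Ic)\}$ (this is the $N_0$ occurring in (2) and (3), and the minimal value in case (1)). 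Hence $\Ic_1\subseteq(x_1^{N_0})$ in all three cases, which already produces the factorisation $\Ic_1=(x_1^{N_0})\Ic_1'$ asked for in (3); and since a translation by $(N_0,0)$ does not change heights, the Newton polygons of $\Ic_1$ and $\Ic_1'$ have the same height. One also reads off that the coefficient of $x_1^{N_0}$ in $\sg(f_i)$ is the one-variable polynomial $\ti(f_i,l_S)(\mu^{q'},\,y_1+\mu^{p'})$ when $\tS f_i$ meets $l_S$, and is $0$ otherwise (in case (1), replace $l_S$ by the line $p\ag+q\bg=N_0$). So everything is controlled by the initial ideal $\ti(\Ic,S)$ and by these substituted face polynomials.

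For (1) and (2) the argument is then short. In case (1) the line $p\ag+q\bg=N_0$ meets $\Dg(\Ic)$ in a single vertex $(\ag_0,\bg_0)$; this vertex (being a vertex of the Newton diagram of the finite set $\bigcup_i\tS f_i$) belongs to $\tS f_{i_0}$ for some $i_0$ and is the only point of $\tS f_{i_0}$ on that line, so Lemma~\ref{Newton-alg}(1) gives $\sg(f_{i_0})=x_1^{N_0}u$ with $u(0,0)\ne0$; hence $x_1^{N_0}\in\Ic_1$ and $\Ic_1=(x_1^{N_0})$. In case (2) it is enough to produce one generator $f_{i_0}$ with $\ti(f_{i_0},l_S)(\mu^{q'},\mu^{p'})\ne0$, because then $\sg(f_{i_0})=x_1^{N_0}u$ with $u(0,0)\ne0$ and again $\Ic_1=(x_1^{N_0})$. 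For this I would show that the ideal $\ti(\Ic,S)$, evaluated at $(x,y)=(\mu^{q'},\mu^{p'})$, is the unit ideal, using the normal form \eqref{initialideal1}/\eqref{initialideal2}: the monomial factor $x^{a_S}y^{b_S}$ does not vanish; $F_{\Ic,S}(\mu^{q'q},\mu^{p'p})$ equals $F_{\Ic,S}(1,\mu)$ up to a power of $\mu$ (because $pp'-qq'=1$ and $F_{\Ic,S}(X,Y)=\prod_i(Y-\mg_{S,i}X)^{\nu_{S,i}}$), hence is nonzero by hypothesis; and, in case \eqref{initialideal2}, the extra factor $(k_1(x^q,y^p),\dots,k_s(x^q,y^p))$ also evaluates to the unit ideal since the $k_j$ are coprime homogeneous polynomials and the point $(\mu^{q'q}:\mu^{p'p})\in\bp^1$ has no zero coordinate. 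Thus some $\ti(f_{i_0},l_S)(\mu^{q'},\mu^{p'})\ne0$.

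Case (3) is the substantial one: here $\mu=\mg_{S,i_*}$ for a unique index $i_*$ and we must bound the height of $\Dg(\Ic_1)=\Dg\big(\bigcup_i\tS\sg(f_i)\big)$ by $\nu:=\nu_{S,i_*}$, the multiplicity of $\mu$ as a root of $F_{\Ic,S}(1,X)$. The key structural input I would establish first is the identity $F_{\Ic,S}=\gcd_i F_{S_i}$, where the gcd runs over those generators $f_i$ whose support meets $l_S$ and $F_{S_i}$ denotes the face polynomial of $f_i$ on the subface $S_i=l_S\cap\Dg(f_i)\subseteq S$: indeed $x^{a_S}y^{b_S}F_{\Ic,S}(x^q,y^p)$ is the gcd of the generators $\ti(f_i,l_S)=x^{a_i}y^{b_i}F_{S_i}(x^q,y^p)$ of $\ti(\Ic,S)$ (the $(k_j)$–part of the normal form contributing $1$ to the gcd), and since each $F_{S_i}$ is homogeneous and $y^p-cx^q$ is irreducible when $\gcd(p,q)=1$, the substitution $x\mapsto x^q,\ y\mapsto y^p$ commutes with taking gcd's. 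In particular no generator can meet $l_S$ in a single point, for that would force $F_{\Ic,S}$ to be a monomial, hence $\equiv1$, contradicting $F_{\Ic,S}(1,\mu)=0$; so every relevant $f_i$ falls under Lemma~\ref{Newton-alg}(3). A short computation (using that $\mu^{p'}$ is a simple root of $Z^p-\mu^{p'p}$) then shows that the coefficient of $x_1^{N_0}$ in $\sg(f_i)$, namely $\ti(f_i,l_S)(\mu^{q'},y_1+\mu^{p'})=\mu^{q'a_i}(y_1+\mu^{p'})^{b_i}F_{S_i}(\mu^{q'q},(y_1+\mu^{p'})^p)$, is a nonzero polynomial vanishing at $y_1=0$ to order exactly $\mult_\mu F_{S_i}(1,X)$. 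Consequently the leftmost vertex of $\Dg(\Ic_1)$ has $x_1$–coordinate $N_0$ and $y_1$–coordinate $\min_i \mult_\mu F_{S_i}(1,X)=\mult_\mu(\gcd_i F_{S_i})(1,X)=\mult_\mu F_{\Ic,S}(1,X)=\nu$, while the rightmost vertex has $y_1$–coordinate $\ge0$; hence the height of $\Dg(\Ic_1)$ is at most $\nu$, as required.

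The main obstacle is precisely this structural identity $F_{\Ic,S}=\gcd_i F_{S_i}$ together with the multiplicity bookkeeping under the substitution $x\mapsto x^q$, $y\mapsto y^p$ (irreducibility of $y^p-cx^q$ for $\gcd(p,q)=1$); once these are in place, cases (1) and (2) and the height estimate in (3) follow formally from Lemma~\ref{Newton-alg} and the remark following it.
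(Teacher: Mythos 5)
Your proposal is correct, and in parts (1)–(2) it is a slightly more explicit version of what the paper waves at with ``consequences of the previous lemma'': you reduce to Lemma~\ref{Newton-alg} applied to a single well-chosen generator, by noting that a vertex of $\Dg(\Ic)$ lies in some $\tS f_{i_0}$, resp.\ that the evaluation of $\ti(\Ic,S)$ at $(\mu^{q'},\mu^{p'})$ is the unit ideal (using $pp'-qq'=1$ and coprimality of the $k_j$). For part (3) you take a genuinely different route from the paper. The paper normalizes the generating set so that $\ti(f_i,S)=x^{a_S}y^{b_S}F_{\Ic,S}(x^q,y^p)k_i(x^q,y^p)$ with the $k_i$ coprime, and then invokes coprimality to find an $i$ with $\nu_i=\nu$; this requires the (glossed-over) ``we may assume'' step of massaging generators. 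You instead keep the given generators $f_i$, introduce the individual face polynomials $F_{S_i}$, prove the structural identity $F_{\Ic,S}=\gcd_i F_{S_i}$ — using that the monomial substitution $x\mapsto x^q,\,y\mapsto y^p$ sends $y-cx$ to the irreducible $y^p-cx^q$ and hence commutes with gcd's — and then read the height of $\Dg(\Ic_1)$ off the $y_1$-order of the $x_1^{N_0}$-coefficient of each $\sg(f_i)$, which equals $\mult_\mu F_{S_i}(1,X)$ because $\mu^{p'}$ is a simple root of $Z^p-\mu^{p'p}$. Both arguments hinge on the same core fact (some generator realizes the minimal multiplicity $\nu$); yours avoids changing the generating set, at the cost of having to justify the gcd identity and its compatibility with the substitution, which you do correctly. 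One tiny point to make explicit when writing this up: after establishing $F_{\Ic,S}=\gcd_i F_{S_i}$, the reason every relevant $f_i$ falls under Lemma~\ref{Newton-alg}(3) is that $F_{\Ic,S}$ divides each $F_{S_i}$, so $F_{S_i}(1,\mu)=0$ for all $i$ once $F_{\Ic,S}(1,\mu)=0$ (you gesture at this but it's worth stating).
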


\begin{proof}
  The two first assertions are consequences of the previous lemma. We prove the third one.
  Let $S$ be the face of the Newton polygon of $\Ic$ with equation $p\ag+q\bg=N_0$.
  We denote
  $$ \ti(\Ic,S)=x^{a_S}y^{b_S}F_{\Ic,S}(x^{q},y^{p})\left(k_1(x^{q},y^{p}),\cdots,k_s(x^{q},y^{p})\right)$$
  as in (\ref{initialideal1}) or (\ref{initialideal2}) with $\deg k_i=d_S\geq 0$.
  We consider $\sg_{(p,q,\mu)}$ with $\mu$ a root of $F_{\Ic,S}$ of multiplicity $\nu$.
  We may assume that
  $$\ti (f_i,S)=x^{a_S}y^{b_S}F_{\Ic,S}(x^{q},y^{p})k_i(x^q,y^p)$$
  with $\deg k_i=d_S$ if $i=1,\cdots, s$ and $k_i=0$ if $i>s$.

  If $i\in \{1,\cdots,s\}$ we have
  $$f_{i,1}(x_1,y_1)=x_1^{N_0}g_i(x_1,y_1),\ \text{where}\ g_i(0,y_1)=y_1^{\nu_i}+...$$
  and $\nu_i$ is the multiplicity of $\mu$ in $F_{\Ic,S}(1,X)k_i(1,X)$. Hence $\nu_i\geq \nu$. Since the greatest common divisor of the $k_i$ is one, there exists $i\in \{1,\cdots,s\}$ such that $\nu_i=\nu$.

  If $i>s$, we consider a line parallel to $l_S$ which hits the Newton polygon of $f_i$, and
 $$f_{i,1}(x_1,y_1)=x_1^{n_i}g_i(x_1,y_1) $$
 with $n_i>N_0$. We conclude that the Newton polygon of $\Ic _1$ has height less than or equal to $\nu$.
  \end{proof}

\medskip
 \subsection{Newton algorithm}

Given an ideal $\Ic$ in $\bc[[x,y]]$ and a Newton map $\sg_{(p,q,\mu)}$, we denote by  $\Ic_{\sg}$ the ideal $\sg_{(p,q,\mu)}(\Ic)$. Consider a sequence $\Sg_n=(\sg_1,\cdots, \sg_n)$  of length $n$ of Newton maps. We define $\Ic _{\Sg_n}$ by induction:
$$\Ic_{\Sg_1}=\Ic_{\sg_1},\cdots, \Ic_{\Sg_i}=(\Ic_{\Sg_{i-1}})_{\sg_i},\cdots,\Ic_{\Sg_n}=(\Ic_{\Sg_{n-1}})_{\sg_n}  .$$

\medskip\begin{theorem}
Let $\Ic$ be a non-trivial ideal in $\bc[[x,y]]$.  There exists an
integer $n_0$ such that, for any sequence $\Sg_n=(\sg_1,\cdots,
\sg_n)$ of Newton maps of length at least $ n_0$, the ideal $\Ic
_{\Sg_n}$ is principal, generated by $x^k(y+h(x))^{\nu}$ with $h\in
x\bc [[x]]$ and $(k,\nu)\in \bn\times \bn$.
\end{theorem}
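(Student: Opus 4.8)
The plan is to iterate the trichotomy of Lemma \ref{Newton-algbis} and show that after finitely many steps we must land in the terminal situation. The key numerical invariant to track is the height $h(\Dg(\Ic_{\Sg_i}))$ of the Newton diagram of the successive ideals. By Lemma \ref{Newton-algbis}(1) and (2), if at some stage $\Ic_{\Sg_i}$ has empty Newton polygon (equivalently, is principal and monomial) we are essentially done: one more Newton map will only rescale it, and in fact such a monomial ideal is already of the asserted form $x^k(y+h(x))^\nu$ with $\nu = 0$. So assume every $\Ic_{\Sg_i}$ is non-principal or at least has a nonempty Newton polygon. Then we always fall into case (3): there is a face $S_i$ with $F_{\Ic_{\Sg_{i-1}},S_i}(1,\mu_i) = 0$, and the height of $\nc(\Ic_{\Sg_i})$ is at most the multiplicity $\nu_i$ of $\mu_i$ as a root of $F_{\Ic_{\Sg_{i-1}},S_i}(1,X)$.

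First I would set up the descent: I claim $h(\Dg(\Ic_{\Sg_i}))$ is a non-increasing sequence of non-negative integers, and that it strictly drops whenever the face polynomial $F_{\Ic_{\Sg_{i-1}},S_i}$ has more than one root (counted without multiplicity) or whenever $d_{S_i} \geq 1$, i.e. whenever the face is dicritical or the initial ideal along it is principal but with a genuinely composite face polynomial. Indeed, from (\ref{hauteur0}) we have $h(\Dg(\Ic_{\Sg_{i-1}})) = \sum_S p_S(d_S + \sum_j \nu_{S,j})$, and if we apply the Newton map centered at a single root $\mu_i = \mu_{S_i,j_0}$ of $F_{\Ic_{\Sg_{i-1}},S_i}$, then by Lemma \ref{Newton-algbis}(3) the new height is at most $\nu_{S_i,j_0} \leq d_{S_i} + \sum_j \nu_{S_i,j} \leq p_{S_i}(d_{S_i}+\sum_j \nu_{S_i,j}) \leq h(\Dg(\Ic_{\Sg_{i-1}}))$, with the chain of inequalities being equalities only in the very restrictive situation $p_{S_i} = 1$, $d_{S_i}=0$, $n_{S_i}=1$, i.e. the Newton polygon consists of a single face with integer slope and face polynomial a pure power $(y-\mu x)^\nu$.

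Hence the only way the height can fail to decrease indefinitely is if, from some stage on, every $\Ic_{\Sg_i}$ is principal — say generated by some $g_i$ — whose Newton polygon is a single segment of integer slope with $\ti(g_i, l_{S_i}) = c\, x^{a_i} y^{b_i}(y - \mu_i x)^{\nu}$, the height staying constant equal to $\nu$. In that stable regime I would argue directly on the power series $g_i(x,y)$: after the translation built into the Newton map $\sg_{(1,1,\mu_i)}$, the transform $g_{i+1}$ has the form $x_1^{k_i} \tilde g_{i+1}(x_1,y_1)$ where $\tilde g_{i+1}$ again has a single face of the same height $\nu$, now of the form $(y_1 - \mu_{i+1}x_1)^{\nu}$ plus higher-order terms; continuing, the $\mu_i$'s assemble into a Puiseux-type series and, because $\nu$ is fixed and the slope is always $1$, the classical Newton–Puiseux argument shows that after finitely many further steps the face polynomial becomes exactly $(y + h(x))^\nu$ with no higher-order correction needed — equivalently $g$ factors as $x^k(y+h(x))^\nu$ up to a unit, which one absorbs since the ideal is generated by $g$. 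Alternatively, and more cleanly, one can invoke the fact that a principal ideal is non-degenerate in the sense of the paper precisely when this terminal form is reached, and appeal to the finiteness of the classical Newton algorithm for a single power series, which is the engine this whole construction imitates.

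The main obstacle I expect is the stable regime: showing that the process genuinely terminates once the height stabilizes, rather than merely showing the height cannot decrease forever. The height-descent argument reduces termination to the case $p_S=1$, $d_S=0$, $n_S=1$ in a uniform way, but in that case one is essentially running the ordinary Newton–Puiseux algorithm to peel off a single branch of multiplicity $\nu$, and one must quote (or reprove) that this terminates — i.e. that a power series with a single Newton face of the shape $(y-\mu x)^\nu$ has, after finitely many Newton maps, that face turn into the pure power of a linear form with vanishing tail. For an ideal rather than a principal element there is the extra subtlety that at each stage one must check the ideal stays principal throughout the stable regime (if it ever becomes non-principal, $d_S$ jumps to $\geq 1$ and the height strictly drops, so this is automatic), but bookkeeping the interaction of the $\mu_i$'s with the coprime generators $k_1,\dots,k_s$ in the non-principal steps requires care; this is where Lemma \ref{Newton-algbis}(3) — specifically the existence of an index $i$ with $\nu_i = \nu$ forced by $\gcd(k_1,\dots,k_s)=1$ — does the essential work.
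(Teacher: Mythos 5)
Your height-descent setup is sound and matches the paper's strategy (it is essentially Lemma \ref{constant0}: the height is preserved only when the Newton polygon has a unique face with $\ti(\Ic,S)=\left(x^ky^l(y-\mu x^q)^{\nu}\right)$ and the Newton map is $\sg_{(1,q,\mu)}$). But there is a genuine gap where you enter the stable regime: you claim that ``the only way the height can fail to decrease indefinitely is if, from some stage on, every $\Ic_{\Sg_i}$ is principal,'' justified by the parenthetical that if the ideal becomes non-principal then $d_S\geq 1$ and the height drops. That is false: non-principality of the \emph{ideal} does not imply non-principality of its \emph{initial ideal} along the face. For example $\Ic=\left((y-x)^2+\dots,\,x^{N}\right)$ with $N$ large has a unique face with $\ti(\Ic,S)=\left((y-x)^2\right)$, so $d_S=0$, the height is preserved, and the ideal can remain non-principal with constant height for many consecutive steps (the monomial generator's support point lies strictly above the supporting line and contributes nothing to the initial ideal). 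Your proposal therefore never addresses non-principal ideals inside the stable regime, and the reduction to the classical Newton--Puiseux argument for a single power series is unjustified.

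This missing case is exactly what the paper's Lemma \ref{constant2} supplies, and it is the most substantial step of the whole proof. There one shows that if the height stays constant \emph{forever}, then every generator $f_i$ — including those with $\ti(f_i,S)=0$ — must be divisible by the common branch $(y+h(x))^{\nu}$-factor: if some $f_i$ with vanishing initial part were not, its transform would be a unit times $x_1^{n_i}$ with $n_i$ strictly larger than the valuation of the other generators, and after finitely many further steps this forces a dicritical face, contradicting perpetual stabilization. Hence (Lemma \ref{constant3}, by induction on the height) an ideal whose algorithm stabilizes indefinitely is in fact principal of the terminal form; contrapositively, a genuinely non-principal ideal can sit in your stable regime only finitely long before the height drops. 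You need this argument (or an equivalent one) to close the proof; as written, your parenthetical dismissal of the non-principal case is where the proof fails.
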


\begin{proof}
If $\mathcal{N}(\Ic) $ is empty, then $\Ic$ is principal and generated by a monomial.  We can take $n_0=0$.

Assume that $\mathcal{N}(\Ic) $ is not empty and has height $h>0$.
For all the Newton maps $\sg_{(p,q,\mu)}$ such that $p\ag+q\bg=N$ is
not an equation of a face of the Newton polygon or $\mu $ is not a
root of the face polynomial, the ideal $\Ic_{\sg}$ is principal,
generated by a monomial. If there is a face whose supporting line
has equation $p\ag+q\bg=N$ and $\mu$ is a root of its face
polynomial, then the Newton polygon of $\Ic_{\sg}$ has height  less
than or equal to $h$. Then either we end with a principal ideal
generated by a monomial or the heights of the Newton polygons
stabilize to a constant positive value. We study the case where the height
remains constant in the following lemma's, what will finish the
proof.
\end{proof}

The first lemma is straightforward.

\begin{lemma}\label{constant0}

The height of  $\Ic_{\sg}$ is equal to the height of $\Ic$ if and only if the Newton polygon of $\Ic$ has a unique face $S$ with
$$\ti (\Ic,S)=\left( x^ky^l (y-\mu x^q)^{\nu} \right)$$
and $\sg=\sg_{(1,q,\mu)}$.
\end{lemma}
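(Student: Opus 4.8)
The plan is to analyze exactly when the height of $\nc(\Ic_{\sg})$ can equal the height $h$ of $\nc(\Ic)$, and show this forces the very rigid form in the statement. By Lemma \ref{Newton-algbis}, if $\sg=\sg_{(p,q,\mu)}$ and the line $p\ag+q\bg=N_0$ is not a supporting line of a face, or is but $F_{\Ic,S}(1,\mu)\neq 0$, then $\Ic_{\sg}$ is principal generated by a power of $x_1$, so its Newton polygon is empty and has height $0<h$. Hence we may assume there is a face $S$ of $\nc(\Ic)$ with supporting line $p\ag+q\bg=N_0$ and that $\mu$ is a root of $F_{\Ic,S}$, say of multiplicity $\nu$; then by Lemma \ref{Newton-algbis}(3) the height of $\nc(\Ic_{\sg})$ is at most $\nu$. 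So equality $h=\nu$ with $h$ as in (\ref{hauteur0}) must hold.

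Next I would compare $h=\nu$ with the height formula (\ref{hauteur0}), namely
\begin{equation*}
h=\sum_{S'} p_{S'}\Bigl(d_{S'}+\sum_{1\leq i\leq n_{S'}}\nu_{S',i}\Bigr).
\end{equation*}
Since $\nu=\nu_{S,i_0}\leq \sum_i\nu_{S,i}$ for the particular face $S$ and index $i_0$ with $\mu=\mu_{S,i_0}$, and since every summand above is a positive integer multiple (the factor $p_{S'}\geq 1$) of a nonnegative integer, equality $h=\nu$ forces: there is only one face $S'=S$ (so $\nc(\Ic)$ has a unique face); $p_S=1$; $d_S=0$ (so the initial ideal $\ti(\Ic,S)$ is principal, i.e. $S$ is not dicritical); $n_S=1$ (so $F_{\Ic,S}$ has a single root); and $\nu_{S,1}=\nu$. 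Writing out $\ti(\Ic,S)$ from (\ref{initialideal1}) with $p_S=1$, $d_S=0$, and $F_{\Ic,S}(x,y)=(y-\mu x)^{\nu}$, we get $\ti(\Ic,S)=\bigl(x^{a_S}y^{b_S}(y-\mu x^{q_S})^{\nu}\bigr)$, which is precisely the claimed form with $k=a_S$, $l=b_S$, $q=q_S$. Finally, since $p_S=1$, the Newton map realizing the root $\mu$ is $\sg_{(1,q_S,\mu)}$, giving $\sg=\sg_{(1,q,\mu)}$. Conversely, if $\nc(\Ic)$ has this unique-face form and $\sg=\sg_{(1,q,\mu)}$, then a direct application of Lemma \ref{Newton-algbis}(3) (or Lemma \ref{Newton-alg}(3) applied to a generator) shows the height of $\nc(\Ic_{\sg})$ is $\nu=h$, so equality holds.

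The one point requiring a little care — the \emph{main obstacle}, though it is minor — is justifying that equality in $\nu\leq h$ forces $d_S=0$ and $n_S=1$ simultaneously with the single-face condition: one must be sure the height formula (\ref{hauteur0}) really accounts additively for the dicritical contribution $d_S$ and for all roots $\mu_{S,i}$, so that no cancellation is possible. Since all terms in (\ref{hauteur0}) are nonnegative integers and $p_{S'}\geq 1$, there is genuinely no room, so the argument closes; the lemma is indeed, as the text says, straightforward.
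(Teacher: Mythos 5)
Your ``only if'' argument is correct and complete: combining the upper bound of Lemma \ref{Newton-algbis}(3) with the height formula (\ref{hauteur0}), and noting that every face contributes at least $p_{S'}\geq 1$ to $h$, does force a unique face with $p_S=1$, $d_S=0$, a single root $\mu$ of multiplicity $\nu=h$, and hence the stated form of $\ti(\Ic,S)$ and of $\sg$. The paper gives no proof (it calls the lemma straightforward), so there is nothing to compare against, but this is a clean way to organize the forcing argument, and it is the direction that is actually used afterwards (in Lemma \ref{constant2}).

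The gap is in the converse. Lemma \ref{Newton-algbis}(3) gives only the upper bound $h(\Ic_{\sg})\leq \nu$, not equality, and equality can genuinely fail under the stated hypotheses. Take $\Ic=\left((y-\mu x^q)^{\nu}\right)$: its Newton polygon has a unique face $S$ with $\ti(\Ic,S)=\left((y-\mu x^q)^{\nu}\right)$ and height $\nu$, yet $\sg_{(1,q,\mu)}(\Ic)=(x_1^{q\nu}y_1^{\nu})$ is a monomial ideal, whose Newton diagram has a single vertex and therefore height $0$. The point is that after the Newton map the leftmost vertex of the new diagram is indeed $(N_0,\nu)$, so $\beta_0=\nu$, but the height is $\beta_0-\beta_m$, and $\beta_m>0$ precisely when $y_1$ divides $\sg(f_i)$ for every generator, i.e.\ when $(y-\mu x^q)$ divides $\Ic$. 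So the ``if'' direction requires either the extra hypothesis $(y-\mu x^q)\nmid\Ic$ or an argument that $\beta_m=0$; citing Lemma \ref{Newton-algbis}(3) alone does not close it. This defect is arguably inherited from the statement of the lemma itself, which is phrased as an equivalence but is only ever used, and only literally true in general, in the ``only if'' direction; still, as a proof of the equivalence as written, your last sentence does not go through.
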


When the height of the Newton polygon stabilizes in the Newton algorithm, we say that the Newton algorithm  stabilizes.

\begin{lemma}\label{constant1}

Let $\Ic$ be a principal ideal whose Newton algorithm stabilizes. Then
$$\Ic =(x^ky^l(y+h(x))^{\nu})$$
with $h(x)\in x\bc [[x]]$.

\end{lemma}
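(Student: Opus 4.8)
The plan is to iterate Lemma \ref{constant0}. Since $\Ic=(f)$ is principal and its Newton algorithm stabilizes, the very first step must already fall into the ``constant height'' situation described by Lemma \ref{constant0}: the Newton polygon $\nc(\Ic)$ has a unique face $S$ with $\ti(\Ic,S)=(x^ky^l(y-\mu x^q)^{\nu})$, and the stabilizing Newton map is $\sg_{(1,q,\mu)}$. In particular $p_S=1$, so $q=q_S$ and the face polynomial is $F_{\Ic,S}(X,Y)=(Y-\mu X)^\nu$ up to the normalization, i.e.\ $\mu$ is a root of multiplicity exactly $\nu$ of $F_{\Ic,S}(1,X)$, which by the remark after Lemma \ref{Newton-algbis} (third case) forces the height of $\nc(\Ic_\sg)$ to be $\le \nu = h$; combined with ``constant'', it equals $\nu$.

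The main technical point is to track the shape of $f$ and of $f_1=\sg_{(1,q,\mu)}(f)$ explicitly. Writing $f(x,y)=x^ky^l\big((y-\mu x^q)^\nu + (\text{terms strictly above } l_S)\big)\cdot(\text{unit})$ and applying $\sg_{(1,q,\mu)}$ with the convention $p'\le q$, $q'<p=1$ (so $q'=0$, $p'$ determined by $-q q' = $ hmm, rather $pp'-qq'=1$ with $p=1$ gives $p'=1+qq'$, $q'=0$, $p'=1$), I compute that $x\mapsto x_1$, $y\mapsto x_1^q(y_1+\mu)$, hence $y-\mu x^q\mapsto x_1^q y_1$. Thus $f_1(x_1,y_1)=x_1^{k+q(l+\nu)}(y_1^\nu + x_1(\cdots))\cdot(\text{unit})$, so after dividing by the monomial $x_1^{k+q(l+\nu)}$ the remaining ideal is again principal, generated by a power series of the form $y_1^\nu + x_1(\cdots)$. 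I then argue that this new principal ideal again has stabilizing Newton algorithm (its height is still $\nu$ and the algorithm of $\Ic_{\Sg_n}$ is a tail of the algorithm of $\Ic$), so Lemma \ref{constant0} applies again: its Newton polygon has a unique face with equation $\alpha + q_1\beta = N_1$ and face polynomial $(y_1-\mu_1 x_1^{q_1})^\nu$.

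Iterating, at stage $j$ we get $\Ic_{\Sg_j}=(x^{a_j}y^{b_j}(y-\mu_j x^{q_j})^\nu + \text{higher})$ for some $\mu_j\in\bc^*$ and $q_j\in\bnm$, and the next Newton map to apply is forced to be $\sg_{(1,q_j,\mu_j)}$. Because the algorithm is an infinite stabilizing process, we obtain an infinite sequence of such maps; reading off the successive substitutions $y_j = x_{j+1}^{q_j}(y_{j+1}+\mu_j)$ and composing, one recovers a well-defined power series $h(x)=\sum_j \mu_j x^{q_1+q_2+\cdots+q_j}$ (with strictly increasing exponents, since each $q_j\ge 1$ and the partial sums grow) such that $y+h(x)$ is, up to unit and monomial factor, the generator of $\Ic$; more precisely the $\nu$-th power $(y+h(x))^\nu$ appears. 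The cleanest way to package this is: after $j$ stabilizing steps the generator of $\Ic$ can be written $f=x^ky^l\big((y+h_j(x))^\nu + R_j(x,y)\big)\cdot(\text{unit})$ with $h_j\in x\bc[[x]]$ of order at least $q_1$ and $R_j$ divisible by $x^{j}$ (or by an $x$-power tending to infinity), and then $h:=\lim_j h_j$ exists in $x\bc[[x]]$ and $f=x^ky^l(y+h(x))^\nu\cdot(\text{unit})$; absorbing the unit (which is itself a unit in $\bc[[x,y]]$) does not change the ideal, giving $\Ic=(x^k y^l (y+h(x))^\nu)$ as claimed.

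The step I expect to be the main obstacle is the bookkeeping that shows the ``remainder'' $R_j$ really gains $x$-divisibility at each stabilizing step, so that the limit $h=\lim h_j$ genuinely exists and genuinely satisfies $f=x^ky^l(y+h(x))^\nu\cdot(\text{unit})$; this requires checking that the terms of $f$ lying strictly above the face line $l_S$ are sent by $\sg_{(1,q,\mu)}$ to terms divisible by $x_1^{k_0+1}$ (which is exactly the content of the computation in the proof of Lemma \ref{Newton-alg}, third case), and iterating this bound. The verification that each successive ideal still ``stabilizes'' — so that Lemma \ref{constant0} may legitimately be reapplied — is a short formal argument: the Newton algorithm of $\Ic_{\Sg_j}$, continued, is literally a tail of a length-$n$ algorithm of $\Ic$ for every $n$, and the heights along that tail are all equal to $\nu$, which is the definition of stabilizing.
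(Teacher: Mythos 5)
Your proposal is correct, and in spirit it is the same argument the paper has in mind; the difference is that the paper's entire proof is a citation (``consequence of Newton's method to find roots of $f$, see Wall, Theorem 2.1.1''), whereas you reconstruct that special case of the Newton--Puiseux theorem from scratch by iterating Lemma \ref{constant0}. Your reconstruction is sound: stabilization forces $p_j=1$ at every stage (so with the normalization $p'=1$, $q'=0$ no roots of $\mu_j$ are needed and each substitution is literally $x\mapsto x_1$, $y\mapsto x_1^{q_j}(y_1+\mu_j)$), the accumulated exponents $q_1+\cdots+q_j$ are strictly increasing, and the partial expansions converge $x$-adically to a genuine $h\in x\bc[[x]]$. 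For the final ``bookkeeping'' step that you flag as the main obstacle, there is a cleaner route than estimating the remainder $R_j$: since the composite substitution sends $f$ to $x^{N_j}g_j$ with $\ord_x g_j(x,0)\geq \nu$ and $N_j\to\infty$, one gets $\ord_x f\bigl(x,-h_j(x)\bigr)\to\infty$ and hence $f\bigl(x,-h(x)\bigr)=0$, so $y+h(x)$ divides $f$; dividing out and rerunning the (still stabilizing) algorithm on the quotient, or an induction on $\nu$, yields the full factor $(y+h(x))^{\nu}$, and comparing Newton diagrams shows the remaining cofactor is $x^ky^l$ times a unit. The only cosmetic point worth noting is that the factor $y^l$ can only occur at the very first stage (for $j\geq 1$ constancy of the height forces the bottom vertex of each $\nc(g_j)$ onto the $\alpha$-axis), which slightly simplifies your iteration.
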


\begin{proof}
It is a consequence of Newton's method to find roots of $f(x,y)\in \bc[[x,y]]$. See for example Theorem 2.1.1. in Wall's book \cite{Wall}.
\end{proof}

\begin{lemma}\label{constant2}
Let $\Ic=(f_1,\cdots, f_r)$ be an ideal whose Newton algorithm stabilizes. Then
$$\Ic=(f)\Ic_1,$$
where the Newton algorithm of $f$ stabilizes and $\mathcal{N}(f)$ has height at least $1$.
\end{lemma}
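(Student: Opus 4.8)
The plan is to extract, from the (necessarily infinite) stabilizing run of the algorithm, a single power series $\phi(x)$, to set $f=x^{k}y^{l}(y-\phi(x))^{\nu}$, and to prove that $(y-\phi(x))^{\nu}$ divides every element of $\Ic$.

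First, a normalisation. Since $\Ic$ stabilizes, Lemma~\ref{constant0} applied at the first step forces $\nc(\Ic)$ to consist of a single face $S$ with $\ti(\Ic,S)=(x^{k}y^{l}(y-\mu x^{q})^{\nu})$, the algorithm proceeding with $\sg_{(1,q,\mu)}$; here $\nu\ge 1$ is the constant positive height, by~(\ref{hauteur0}). As $\nc(\Ic)$ is this single segment, $\Delta(\Ic)\subseteq\{\alpha\ge k,\ \beta\ge l\}$, and since $\tS g\subseteq\Delta(\Ic)$ for every $g\in\Ic$ we get $x^{k}y^{l}\mid g$ for all $g\in\Ic$. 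Thus $\Ic=(x^{k}y^{l})\Ic'$, where $\Ic'$ still stabilizes (a Newton map applied to the monomial $x^{k}y^{l}$ has empty Newton polygon, so the monomial factor never affects the heights occurring in the algorithm) and $\ti(\Ic',S)=((y-\mu x^{q})^{\nu})$. Hence it suffices to treat $k=l=0$ and to multiply the resulting $f$ by $x^{k}y^{l}$ at the end.

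Running the stabilizing algorithm, Lemma~\ref{constant0} shows that each step applies a map $\sg_{(1,q_{i},\mu_{i})}$ with $q_{i}\in\bnm$ and $\mu_{i}\in\bc^{*}$ ($i\ge 0$), and that the run never terminates. Such a map fixes $x$ and sends the current variable $y_{i}$ to $x^{q_{i}}(y_{i+1}+\mu_{i})$, so the composite $\sg_{\Sg_{n}}:=\sg_{n}\circ\cdots\circ\sg_{1}$ fixes $x$ and sends $y$ to $x^{Q_{n-1}}y_{n}+\psi_{n}(x)$, where $Q_{j}:=q_{0}+\dots+q_{j}$ and $\psi_{n}(x):=\sum_{j<n}\mu_{j}x^{Q_{j}}$. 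Put $\phi(x):=\sum_{j\ge 0}\mu_{j}x^{Q_{j}}\in x\bc[[x]]$ (well defined, the $Q_{j}$ being strictly increasing; not a polynomial, all $\mu_{j}$ being nonzero) and $f:=(y-\phi(x))^{\nu}$. A direct substitution gives $\sg_{(1,q_{0},\mu_{0})}(f)=x^{q_{0}\nu}\bigl(y_{1}-\phi^{(1)}(x)\bigr)^{\nu}$ with $\phi^{(1)}(x)=\sum_{j\ge 1}\mu_{j}x^{q_{1}+\dots+q_{j}}$ of the same shape; iterating, the algorithm of $(f)$ runs forever with constant height $\nu$, so $f$ has a stabilizing Newton algorithm and $\nc(f)$ has height $\nu\ge 1$.

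It remains to prove $\Ic\subseteq(f)$, that is, $(y-\phi(x))^{\nu}\mid g$ for every $g\in\Ic$; this is the heart of the proof. Iterating Lemma~\ref{Newton-algbis}(3) along the run, $\Ic_{\Sg_{n}}\subseteq(x^{M_{n}})$ with $M_{n}\ge Q_{n-1}\nu$, because at step $i$ the unique face of the current ideal lies on a line $\alpha+q_{i}\beta=N$ with $N\ge q_{i}\nu$ (its initial ideal contains $(y-\mu_{i}x^{q_{i}})^{\nu}$, of weighted degree $q_{i}\nu$). Fix $g\in\Ic$ and $n$. In the coordinate $z=y-\psi_{n}(x)$ write $g(x,z+\psi_{n}(x))=\sum_{b}a_{b}(x)z^{b}$. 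From $\sg_{\Sg_{n}}(g)=g\bigl(x,\,x^{Q_{n-1}}y_{n}+\psi_{n}(x)\bigr)\in(x^{M_{n}})$, comparing the coefficient of $y_{n}^{b}$ gives $a_{b}(x)\,x^{Q_{n-1}b}\in(x^{M_{n}})$, hence $x^{Q_{n-1}}\mid a_{b}(x)$ for every $b<\nu$ (as $M_{n}-Q_{n-1}b\ge Q_{n-1}$ there). Since $\phi-\psi_{n}$ has $x$-order $Q_{n}>Q_{n-1}$, expanding $g(x,z+\phi(x))=\sum_{c}A_{c}(x)z^{c}$ one finds $x^{Q_{n-1}}\mid A_{c}(x)$ also for $c<\nu$, so $g\in\bigl((y-\phi(x))^{\nu}\bigr)+(x^{Q_{n-1}})$. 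As $Q_{n-1}\to\infty$ and $\bigl((y-\phi(x))^{\nu}\bigr)$ is a closed ideal of the Noetherian local ring $\bc[[x,y]]$, Krull's intersection theorem gives $g\in\bigl((y-\phi(x))^{\nu}\bigr)$. Hence $\Ic=(f)\Ic_{1}$ with $\Ic_{1}:=f^{-1}\Ic$, and after restoring the monomial factor $f$ has the stated properties. I expect the main obstacle to be exactly this last paragraph: recognising $\phi$ intrinsically inside the algorithm, keeping track of the exponents $M_{n}$, and controlling the passage from the truncations $\psi_{n}$ to the full branch $\phi$ well enough to apply Krull's theorem; the reduction to $k=l=0$ and the verification of the properties of $f$ are routine given Lemmas~\ref{constant0} and~\ref{Newton-algbis}.
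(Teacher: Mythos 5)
Your proof is correct, and it takes a genuinely different route from the paper's. The paper argues generator by generator: for the generators $f_i$ whose initial form on the unique face is nonzero it invokes the principal case (Lemma~\ref{constant1}, i.e.\ Newton--Puiseux for a single series) to write $f_i=x^ky^l(y+h)^{\nu}u_i$, and for the generators with vanishing initial form it shows that their initial parts along the parallel supporting lines must still be divisible by $(y-\mu x^q)$ --- otherwise a dicritical face would appear after finitely many steps and the algorithm could not keep stabilizing --- concluding that the branch $y+h(x)$ occurs in the Newton algorithm of every $f_i$ and hence that the $f_i$ share a common factor of positive height. You instead construct the limit branch $\phi=\sum_j\mu_jx^{Q_j}$ directly from the infinite run, and prove divisibility of \emph{every} element of $\Ic$ by $(y-\phi)^{\nu}$ via the exponent bound $\Ic_{\Sigma_n}\subseteq(x^{Q_{n-1}\nu})$, a coefficient comparison in the coordinates $z=y-\psi_n(x)$, and Krull's intersection theorem. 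Your argument buys two things: it avoids the paper's somewhat informal steps (``the Newton algorithm of $\Ic$ stabilizes, hence the Newton algorithm of each $f_i\in I$ stabilizes'' and ``therefore the $f_i$ have a common factor''), replacing them by an explicit approximation estimate; and it yields the sharper conclusion that the full $\nu$-th power $(y-\phi)^{\nu}$ divides $\Ic$, whereas the paper's argument as written only extracts a common factor with the single root $y+h(x)$. The paper's route is shorter and stays entirely within the combinatorics of initial forms already set up in Lemmas~\ref{Newton-alg}--\ref{Newton-algbis}, at the cost of leaving more to the reader.
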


\begin{proof}
We may assume that $\mathcal{N}(\Ic)$ has a unique face $S$ and
$$\ti(\Ic,S)= \left( x^ky^l(y-\mu x^q)^{\nu} \right).$$
Then there exists $i\in \{1,\cdots,r\}$ such that $\ti(f_i,S)=c_ix^ky^l(y-\mu x^q)^{\nu}$ with $c_i\in \bc^*$.
Let
$$I=\{i\in \{1,\cdots,r\}\mid \ti(f_i,S)=c_ix^ky^l(y-\mu x^q)^{\nu} \ \text{for some}\ c_i\in \bc^* \}$$
If $i\notin I$ then $\ti(f_i,S)=0$.

Let $\sg=\sg_{(1,q,\mu)}$.
For $i\in I$, we have $f_i(x_1,y_1)=x_1^{k+ql+q\nu}g_i(x_1,y_1)$ with $g_i(0,y_1)=y_1^{\nu}+\cdots$. Then, as the Newton algorithm of $\Ic$ stabilizes, for all $i\in I$ the Newton algorithm of $f_i$ stabilizes and
$$f_i(x,y)=x^ky^l(y+h(x))^{\nu}u_i(x,y),$$ where $u_i(x,y)$ is a unit in $\bc[[x,y]]$.

For $i\notin I$, consider the parallel line $l_i$ to $S$ which hits the Newton polygon of $f_i$.  The initial part  $\ti (f_i, l_i)=x^{a_i}y^{b_i}F_i(x^q,y)$, where $F_i$ can be a constant. If $F_i$ is a constant or is not divisible by $(y-\mu x^q)$, then
$$f_i(x_1,y_1)=x^{n_i}u_i(x_1,y_1)$$ where $u_i$ is a unit and $n_i > k+ql+q\nu$. But if there exists $i\notin I$ such that $f_i(x_1,y_1)=x^{n_i}u_i(x_1,y_1)$ with $n_i > k+ql+q\nu$, then after a finite number of steps we have a dicritical face and the algorithm does not stabilize further. We conclude that $F_i(x^q,y)$ is divisible by $(y-\mu x^q)$ for all $i\notin I$. Consequently $\sg$ belongs to the Newton algorithm of $f_i$ for all $i\in \{1,\cdots,r\}$.

We showed that the Newton algorithm of $y+h(x)$ appears in the Newton algorithm of $f_i$ for all $i\in \{1,\cdots,r\}$.  Therefore the $f_i$ have a common factor with height at least one with a single root $y+h(x)$, and indeed $\Ic$ is of the form $\Ic=(f)\Ic_1$ as stated.
\end{proof}

\begin{lemma}\label{constant3}

If the Newton algorithm of $\Ic$ stabilizes, then
$$\Ic=\left( x^ky^l(y+h(x))^{\nu} \right)$$
\end{lemma}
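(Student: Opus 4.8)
The plan is to combine Lemmas~\ref{constant1} and~\ref{constant2} by a straightforward induction, the induction parameter being the height $h$ of $\nc(\Ic)$ (equivalently, by Lemma~\ref{constant0} and the stabilization hypothesis, the constant height to which the algorithm stabilizes). If $\nc(\Ic)$ is empty, then $\Ic$ is principal, generated by a monomial, and the statement holds with $\nu=0$ and $h(x)=0$. If $\Ic$ is principal but $\nc(\Ic)$ is nonempty, Lemma~\ref{constant1} gives the result directly. So assume $\Ic$ is non-principal with $\nc(\Ic)$ nonempty; since the Newton algorithm of $\Ic$ stabilizes, Lemma~\ref{constant0} forces $\nc(\Ic)$ to have a unique face $S$ with $\ti(\Ic,S)=\bigl(x^ky^l(y-\mu x^q)^{\nu}\bigr)$, and in particular $h = \nu$.

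Next I would invoke Lemma~\ref{constant2} to write $\Ic=(f)\Ic_1$, where the Newton algorithm of $f$ stabilizes and $\nc(f)$ has height at least $1$. Inspecting the proof of Lemma~\ref{constant2}, the factor $f$ is (a generator with height exactly $\nu$ having) single root $y+h(x)$, so that by Lemma~\ref{constant1} applied to the \emph{principal} ideal $(f)$ we get $(f)=\bigl(x^{k'}y^{l'}(y+h(x))^{\nu}\bigr)$ for some $h\in x\bc[[x]]$ and suitable $k',l'$, and the residual ideal $\Ic_1$ satisfies $\Ic=(f)\Ic_1$. The key point is then to control $\Ic_1$: its Newton polygon has strictly smaller height than that of $\Ic$. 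Indeed, after performing the Newton map $\sg=\sg_{(1,q,\mu)}$, the factor $(y+h(x))^{\nu}$ accounts for all of the height $\nu$ in the Newton diagram of $\Ic_\sg$, so pulling it out leaves a transform of $\Ic_1$ with Newton polygon of height zero along this branch; more precisely, $(\Ic_1)_\sg$ has Newton polygon of height $<\nu$, and since the algorithm of $\Ic_1$ still stabilizes (being a quotient of stabilizing data), we may apply the induction hypothesis to $\Ic_1$.

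By induction, $\Ic_1=\bigl(x^{k''}y^{l''}(y+h_1(x))^{\nu_1}\bigr)$ for some $h_1\in x\bc[[x]]$ and $(k'',l''),\nu_1$, with $\nu_1<\nu$. Then $\Ic=(f)\Ic_1=\bigl(x^{k+k''}y^{l+l''}(y+h(x))^{\nu}(y+h_1(x))^{\nu_1}\bigr)$, which a priori is a product of two such ideals. To conclude we must show this product is again of the single-branch form $\bigl(x^Ky^L(y+H(x))^{N}\bigr)$. This is where I expect the main subtlety: one must argue that $h_1=h$, i.e.\ that the two branches coincide. The reason is that the Newton algorithm of $\Ic$ stabilizing forces \emph{every} generator, and hence every factor, to share the initial data $\sg_{(1,q,\mu)}$ at each stage (this is exactly the mechanism used in the proof of Lemma~\ref{constant2}, where we showed that $\sg$ belongs to the Newton algorithm of every $f_i$); so the branches $y+h(x)$ and $y+h_1(x)$ agree to all orders under the successive Newton maps, hence are equal in $\bc[[x]]$. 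With $h_1=h$ we get $(y+h(x))^{\nu}(y+h_1(x))^{\nu_1}=(y+h(x))^{\nu+\nu_1}$ and therefore $\Ic=\bigl(x^{K}y^{L}(y+h(x))^{\nu+\nu_1}\bigr)$, completing the induction. The one routine verification left is bookkeeping of the exponents $k,l$ under the Newton map, which I would not spell out in detail.
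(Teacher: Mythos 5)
Your proof follows the paper's intended argument exactly: the paper's entire proof of this lemma reads ``We use induction on the height and the previous lemma,'' and your proposal is a correct unwinding of that induction via Lemmas~\ref{constant0}--\ref{constant2}. Two minor points: the assertion that the Newton algorithm of $\Ic_1$ stabilizes is the one step deserving an actual argument (it holds because $\nc(\Ic_{\Sigma})$ is the Minkowski sum of $\nc(f_{\Sigma})$ and $\nc((\Ic_1)_{\Sigma})$, so the unique-face/single-root condition of Lemma~\ref{constant0} for $\Ic_{\Sigma}$ forces the same condition --- or an empty polygon --- for $(\Ic_1)_{\Sigma}$ at every stage), and your final step proving $h_1=h$ can be skipped entirely, since once the induction shows $\Ic$ is principal, Lemma~\ref{constant1} applies directly to give the stated form.
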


\begin{proof}
We use induction on the height and the previous lemma.
\end{proof}

\bigskip
\noindent
{\bf Example 1.}
We consider in $\bc[[x,y]]$ the ideal
$$\Ic=\left(y^4(y+x)(y^2-3x),((y+x)^3+x^8)(y^2-3x)\right).$$
Its Newton polygon is given in Figure 1.

  \begin{figure}[ht]
 \begin{center}
\includegraphics{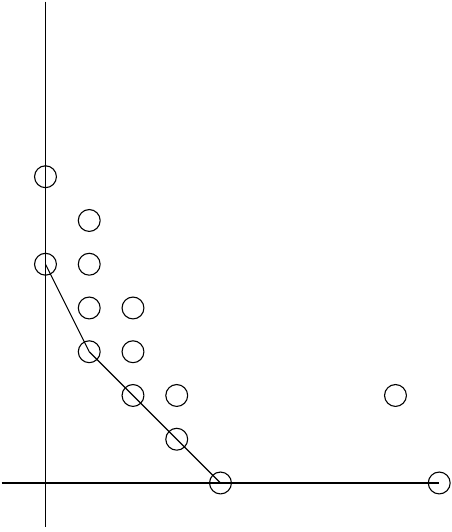}
\caption{}
 \end{center}
 \end{figure}

The faces $S_1$ and $S_2$ have supporting lines with equations $2\alpha+\beta=5$ and $\alpha+\beta=4$, respectively. The initial ideals of $\Ic$ with respect to these segments are
$$\ti (\Ic,S_1) = \left(y^3(y^2-3x)\right) \qquad\text{and}\qquad \ti (\Ic,S_2)= \left(x(y+x)^3\right).$$
Both segments are not dicritical; their face polynomials are $F_{\Ic,S_1}=y^2-3x$ and $F_{\Ic,S_2}=(y+x)^3$, respectively.
We first consider the Newton map $\sg_{(p,q,\mu)}=\sg_{(2,1,3)}$ associated to $S_1$ and $\mu=3$. It is given by the substitution
$$x=3x_1^2, \qquad\qquad y=x_1(y_1+3).$$
The image ideal $\Ic_1$ is given by
$$
\begin{aligned}
\Ic_1 &= \left(x_1^4 (y_1+3)^4(x_1y_1+3x_1+3x_1^2)(x_1^2(y_1+3)^2-9x_1^2)\right. ,\\
 &\qquad\qquad\qquad\qquad\qquad\qquad\left. ((x_1y_1+3x_1+3x_1^2)^3+3^8x_1^{16})(x_1^2(y_1+3)^2-9x_1^2)\right)\\
&=\left(x_1^7(y_1^2+6y_1),x_1^5(y_1^2+6y_1)\right)\\
&= (x_1^5y_1).
\end{aligned}
$$
It is a monomial ideal, hence we stop the procedure for $S_1$.

Next we consider the Newton map $\sg_{(p,q,\mu)}=\sg_{(1,1,-1)}$ associated to $S_2$ and $\mu=-1$. It is given by the substitution
$$x=x_1, \qquad\qquad y=x_1(y_1-1).$$
The image ideal $\Ic_1$ is given by
$$
\begin{aligned}
\Ic_1 &= \left(x_1^4(y_1-1)^4 x_1y_1(x_1^2(y_1-1)^2-3x_1), (x_1^3y_1^3+x_1^8)(x_1^2(y_1-1)^2-3x_1)\right)\\
&=\left(x_1^6y_1,x_1^4(y_1^3+x_1^5)\right)\\
&= x_1^4(x_1^2y_1,y_1^3+x_1^5).
\end{aligned}
$$
Its Newton polygon is given in Figure 2.

  \begin{figure}[ht]
 \begin{center}
\includegraphics{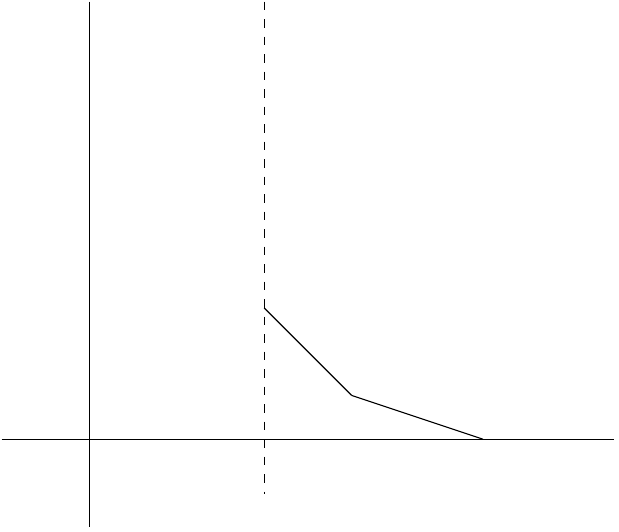}
\caption{}
 \end{center}
 \end{figure}

The faces $S'_1$ and $S'_2$ have supporting lines with equations $\alpha+\beta=7$ and $\alpha+3\beta=9$, respectively. The initial ideals of $\Ic_1$ with respect to these segments are
$$\ti (\Ic_1,S'_1) = x_1^4y_1(x_1^2,y_1^2) \qquad\text{and}\qquad \ti (\Ic_1,S'_2)= x_1^6(y_1,x_1^3).$$
Both segments are dicritical and have constant face polynomial; their degrees are $d_{S'_1}=2$ and $d_{S'_2}=1$.

We continue with the Newton map $\sg_{(p,q,\mu)}=\sg_{(1,1,\mu)}$ associated to $S'_1$ and $\mu\in \bc^*$. It is given by the substitution
$$x_1=x_2, \qquad\qquad y_1=x_2(y_2+\mu).$$
The image ideal $\Ic_2$ is given by
$$
\Ic_2 = x_2^4 \left(x_2^3, x_2^3(y_2+\mu)^3+x_2^5\right) = (x_2^7).
$$
Analogously we consider the Newton map $\sg_{(p,q,\mu)}=\sg_{(1,3,\mu)}$ associated to $S'_2$ and $\mu\in \bc^*$. It is given by the substitution
$$x_1=x_2, \qquad\qquad y_1=x_2^3(y_2+\mu).$$
The image ideal $\Ic_2$ is given by
$$
\Ic_2 = x_2^4 \left(x_2^5, x_2^9(y_2+\mu)^3+x_2^5\right) = (x_2^9).
$$
Both ideals are monomial, hence we stop also the procedure for $S_2$.

\bigskip
It should be clear that, when a face of a Newton polygon is dicritical with constant face polynomial, the associated Newton map $\sg_{(p,q,\mu)}$ induces a monomial ideal for any $\mu\in \bc^*$. More generally, when a face $S$ of a Newton polygon is dicritical, the associated Newton map $\sg_{(p,q,\mu)}$ induces a monomial ideal for all $\mu\in \bc^*$ that are not roots of the face polynomial $F_{\Ic,S}$.  We performed explicitly the last two Newton maps in the previous example to illustrate this fact. In later examples such Newton maps will not be computed anymore.

\medskip
\begin{definition} Let $\Ic$ be a non-trivial ideal in $\bc[[x,y]]$.
We define the {\it depth} of $\Ic$, denoted by $d(\Ic)$, by
induction. If $\Ic$ is principal, generated by $x^k(y+h(x))^{\nu}$
with $h\in x\bc[[x]]$ and $(k,\nu)\in \bn\times \bn$, we say that
its depth is $0$. Otherwise, we define
$$d(\Ic)=\max d(\Ic_{\sg})+1 ,$$
where the maximum is taken over all possible Newton maps.
\end{definition}

\medskip
\subsection{Non degenerate ideals}

\begin{definition}
Let $\Ic$ be a non-trivial ideal in $\bc[[x,y]]$. We say that $\Ic$
is  {\it non  degenerate} if it is of depth at most $1$.
\end{definition}

One easily verifies that monomials ideals are  non degenerate. But there are many other ones, for instance Example 2 below (taken from \cite{VV}).
On the other hand, the ideal in Example 1 above has depth $2$ and hence it is  degenerate.

\bigskip
\noindent
{\bf Example 2.}
We consider in $\bc[[x,y]]$ the ideal
$$\Ic=(x^3y,x^6+y^4).$$ Its Newton polygon is given in Figure 3.

  \begin{figure}[ht]
 \begin{center}
\includegraphics{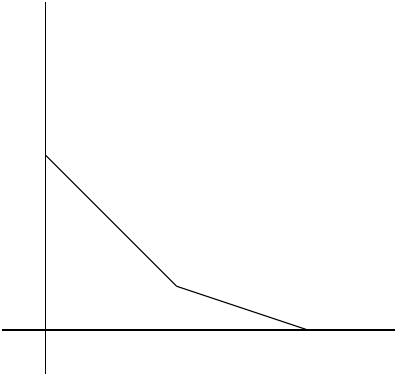}
\caption{}
 \end{center}
 \end{figure}

It has two faces $S_1$ and $S_2$ whose supporting lines have equation $\alpha+\beta=4$ and $\alpha+3\beta=6$, respectively.
The initial ideals of $\Ic$ with respect to these segments are
$$\ti (\Ic,S_1) = y(x^3,y^3) \qquad\text{and}\qquad \ti (\Ic,S_2)= x^3(y,x^3).$$
Both faces are dicritical (of degree $d_{S_1}=3$ and $d_{S_2}=1$) and have constant face polynomials. Hence all possible Newton maps will turn
$\Ic$ into a monomial ideal, and consequently its depth is $1$.

\bigskip
We now show that for the \lq extremal\rq\ cases, principal ideals or ideals of finite codimension, our notion of non degeneracy corresponds to familiar notions.

\begin{prop}
Let $\Ic=(f)$ be a principal ideal in $\bc[[x,y]]$. Then $\Ic$ is non
degenerate if and only if  the reduced curve associated to $f$ is
non degenerate (in the usual sense).
\end{prop}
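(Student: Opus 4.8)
The plan is to show that $d\bigl((f)\bigr)\le 1$ if and only if every face polynomial of $\nc(f_{\mathrm{red}})$ is squarefree (has no multiple root), which is the usual non-degeneracy of the reduced curve. I would first dispose of monomial factors: if $f=x^ay^b\tilde f$ with $\tilde f$ coprime to $xy$, then $\sg_{(p,q,\mu)}(x^ay^b)$ is a unit times $x_1^{pa+qb}$, so $(f)_\sg=(x_1^{pa+qb})(\tilde f)_\sg$ and, a monomial factor not affecting the depth, $d((f))=d((\tilde f))$; meanwhile $\nc(f_{\mathrm{red}})$ is a translate of $\nc(\tilde f_{\mathrm{red}})$ carrying the same face polynomials. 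So I assume $f$ coprime to $xy$ (and, $\nc(f)$ being then either empty — a monomial ideal, of depth $0$, with $f_{\mathrm{red}}$ a monomial, both sides holding — or non-empty). Write $f=\prod_j g_j^{m_j}$ with the $g_j$ irreducible, pairwise coprime and coprime to $xy$, so $f_{\mathrm{red}}=\prod_j g_j$. For an irreducible $g_j$, $\nc(g_j)$ is a single segment of some direction $(p_j,q_j)$, and a short Puiseux computation (the conjugate branches share their leading coefficient up to $p_j$-th roots of unity) shows its initial form along that segment is a pure power $x^{a_j}y^{b_j}(y^{p_j}-\mu_jx^{q_j})^{t_j}$ with $t_j\ge 1$. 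By multiplicativity of initial forms, a face $S$ of $\nc(f_{\mathrm{red}})$ of direction $(p,q)$ has $F_{f_{\mathrm{red}},S}(1,Y)=c\prod_{j\in J(p,q)}(Y-\mu_j)^{t_j}$, where $J(p,q)=\{j:(p_j,q_j)=(p,q)\}$; hence $f_{\mathrm{red}}$ is non-degenerate precisely when, for every occurring $(p,q)$, the $\mu_j\ (j\in J(p,q))$ are pairwise distinct and all $t_j=1$.

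Then I would analyse one Newton map. Fix $\sg=\sg_{(p,q,\mu)}$; as $(f)$ is principal, $(f)_\sg=\bigl(\prod_j\sg(g_j)^{m_j}\bigr)$. If $(p,q)$ is not a face direction of $\nc(f)$, or $\mu\notin\{\mu_j\}$, then Lemma~\ref{Newton-alg}(1)--(2) gives $\sg(g_j)=x_1^{n_j}(\text{unit})$ for every $j$, so $(f)_\sg$ is generated by a power of $x_1$, of depth $0$; such $\sg$ never obstruct $d((f))\le 1$. Otherwise let $J=\{j:(p_j,q_j)=(p,q),\ \mu_j=\mu\}\ne\varnothing$; then $\sg(g_j)=x_1^{n_j}(\text{unit})$ for $j\notin J$ (Lemma~\ref{Newton-alg}(1)--(2)), while for $j\in J$ Lemma~\ref{Newton-alg}(3) gives $\sg(g_j)=x_1^{n_j}g_j'$ with $g_j'(0,y_1)=y_1^{t_j}+\cdots$, where the non-exceptional factor $g_j'$ is the strict transform of the reduced germ $g_j$, hence itself reduced, vanishing at the origin, and pairwise coprime for distinct $j$. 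Thus $(f)_\sg=\bigl(x_1^{M}\prod_{j\in J}(g_j')^{m_j}\bigr)$, and by unique factorization this ideal has depth $0$ — is generated by $x_1^k(y_1+h(x_1))^\nu$, $h\in x_1\bc[[x_1]]$ — if and only if $\prod_{j\in J}(g_j')^{m_j}$ equals $(y_1+h(x_1))^\nu$ times a unit; since the $g_j'$ are reduced, coprime and non-units, this forces $J$ to be a singleton $\{j_0\}$ with $g_{j_0}'$ of $y_1$-order $1$, i.e. (by Weierstrass preparation, one step of Newton's method; cf.\ the proof of Lemma~\ref{constant1}) with $t_{j_0}=1$.

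Combining the two analyses, $d((f)_\sg)=0$ for every Newton map $\sg$ if and only if, for every face direction $(p,q)$ of $\nc(f_{\mathrm{red}})$ and every $\mu$, the set $\{j\in J(p,q):\mu_j=\mu\}$ is empty or a singleton $\{j_0\}$ with $t_{j_0}=1$; by the criterion at the end of the first paragraph this is exactly non-degeneracy of $f_{\mathrm{red}}$. The proposition follows: if $d((f))\le 1$ then, by the definition of depth, either $(f)=(x^k(y+h(x))^\nu)$ with $h\in x\bc[[x]]$ — so $f_{\mathrm{red}}$ is a power of $x$ times a smooth branch and is non-degenerate — or $d((f)_\sg)=0$ for all $\sg$ and $f_{\mathrm{red}}$ is non-degenerate; conversely non-degeneracy of $f_{\mathrm{red}}$ gives $d((f)_\sg)=0$ for all $\sg$, whence $d((f))\le 1$. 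The one point that needs genuine singularity-theoretic input is the behaviour of $\sg_{(p_j,q_j,\mu_j)}$ on an irreducible branch $g_j$: that the non-exceptional part of $\sg(g_j)$ is the strict transform — reduced, of $y_1$-order $t_j$, and a smooth graph over $x_1$ exactly when $t_j=1$ — together with the elementary bookkeeping that branches with a different tangent or a different Newton-polygon direction become monomials times units in the new chart. These are the Newton--Puiseux facts underlying the whole algorithm, already invoked (for Lemma~\ref{constant1}) via Wall's book; the rest is multiplying the branch contributions together.
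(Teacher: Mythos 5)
Your argument is correct. It is essentially the paper's approach — non-degeneracy of $f_{\mathrm{red}}$ tracked through one Newton map via Lemma~\ref{Newton-alg} — but you make the passage between $f$ and $g=f_{\mathrm{red}}$ explicit by writing $f=\prod_j g_j^{m_j}$, recording that each irreducible factor has a single Newton-polygon face with initial form $(y^{p_j}-\mu_j x^{q_j})^{t_j}$, and following each factor separately through $\sg_{(p,q,\mu)}$. The paper's proof only exhibits $(g_\sg)=(x_1^k(y_1+h(x_1)))$ in the forward direction and leaves the deduction about $(f_\sg)$ implicit; your factorization argument supplies exactly that step (a single $j_0$ with $t_{j_0}=1$ contributes, and then $f_\sg$ is $x_1^M(y_1+h)^{m_{j_0}}$ times a unit). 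The converse direction is likewise the paper's, just spelled out. The Newton--Puiseux inputs you invoke — a reduced (resp.\ irreducible) germ has reduced (resp.\ single-face) strict transform, strict transforms of coprime branches stay coprime — are the same facts cited via Wall in the proof of Lemma~\ref{constant1}, so nothing beyond the paper's toolkit is used; you have simply been more careful about the bookkeeping, which is a genuine improvement in completeness rather than a different method.
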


\begin{proof}
Denote $g:=f_{\operatorname{red}}$ and assume that the curve
associated to $g$ is non degenerate. That means that  for all faces
$S$ of $\nc (g)$, if $(y^p-\mu x^q)^{\nu}$ (with $\mu\in \bc^*$)
divides $\ti(g,{l_S})$, then $\nu=1$.  Then after the corresponding
Newton map $\sigma_{(p,q,\mu)}$, the height of the Newton polygon is
$0$ or $1$ and the ideal $(g_\sigma)$ is generated by an element of
the form $x_1^{k}(y_1+h(x_1))$.

Reciprocally, if the depth of $\Ic =(f)$ is $0$ or $1$, after any Newton map $\sg$ we obtain $f_{\sg}(x_1,y_1)=x_1^N(y_1+h(x_1))^{\nu}u_1(x_1,y_1)$ with $u_1(0,0)\neq 0$, implying that $g_{\sg}(x_1,y_1)=x_1^{N'}(y_1+h(x_1))u_1(x_1,y_1)$. Consequently, the power of any $(y^p-\mu x^q)$ (with $\mu\in \bc^*$)  which divides $\ti(g,{l_S})$
is at most one, yielding that $g$ is non degenerate.
\end{proof}

In the sequel we often treat  non-trivial ideals in $\bc[[x,y]]$ of finite codimension. This property is equivalent to being $(x,y)$-primary and equivalent to having support $\{0\}$.

\begin{prop}
Let $\Ic=(f_1,\cdots, f_r)$ be an ideal in $\bc[[x,y]]$ of finite
codimension. Then $\Ic$ is non degenerate if and only if, for all
faces $S$ of $\nc (\Ic)$, there is no $\mu\in\bc^*$ such that
$(y^p-\mu x^q)$ divides  $ \ti (f_i,l_S)$  for all $i=1,\cdots, r$.
\end{prop}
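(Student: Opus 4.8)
The plan is to reduce the statement to the principal case (the preceding Proposition) by analysing what ``dicritical face'' means for an ideal of finite codimension, and exploiting the fact that finite codimension is preserved under Newton maps in the relevant sense. First I would observe that for an ideal $\Ic$ of finite codimension, its Newton polygon $\nc(\Ic)$ necessarily meets both coordinate axes: having support $\{0\}$ forces $\Dg(\Ic)$ to contain points of the form $(N,0)$ and $(0,M)$, so no face lies on a line $p\ag+q\bg=N$ with $p=0$ or $q=0$, and the height $h$ of $\Dg(\Ic)$ is at least $1$ whenever $\Ic$ is not principal generated by a monomial. This is the feature that makes the finite-codimension case cleaner than the general one: every Newton map that can possibly fail to produce a monomial ideal is of the form $\sg_{(p,q,\mu)}$ with $\mu$ a root of a face polynomial, and after it the transformed ideal still has finite codimension.

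Next, I would unwind the definition of non degeneracy: $\Ic$ is non degenerate iff $d(\Ic)\le 1$, i.e. for every Newton map $\sg$ the ideal $\Ic_\sg$ is principal of the form $(x_1^k(y_1+h(x_1))^\nu)$. Using Lemma \ref{Newton-algbis}, a Newton map $\sg_{(p,q,\mu)}$ produces a non-principal or non-monomial-like ideal only when there is a face $S$ of $\nc(\Ic)$ with supporting line $p\ag+q\bg=N_0$ and $F_{\Ic,S}(1,\mu)=0$, that is $\mu=\mu_{S,i}$ for some root of the face polynomial. So $\Ic$ is degenerate iff there is such a face $S$ and such a root $\mu_{S,i}$ for which $\Ic_{\sg_{(p_S,q_S,\mu_{S,i})}}$ still has depth $\ge 1$. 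Now the condition ``$(y^p-\mu x^q)$ divides $\ti(f_i,l_S)$ for all $i$'' is exactly the condition that $\mu$ is a common root, i.e.\ that $\mu = \mu_{S,i}$ appears as a root of the \emph{face polynomial} $F_{\Ic,S}$ (recall from \eqref{initialideal1}--\eqref{initialideal2} that $F_{\Ic,S}$ collects precisely the factors $(y-\mu_{S,i}x^{p_S/?})$ common to all initial forms, while the $k_j$ account for the dicritical part). Thus the right-hand condition of the Proposition is equivalent to: no face of $\nc(\Ic)$ has a face polynomial $F_{\Ic,S}$ with a root, i.e.\ $F_{\Ic,S}\equiv 1$ for every face $S$.

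It then remains to show: $\Ic$ of finite codimension is non degenerate iff $F_{\Ic,S}\equiv 1$ for every face $S$ of $\nc(\Ic)$. For the ``if'' direction, if every $F_{\Ic,S}$ is trivial then by Lemma \ref{Newton-algbis} every Newton map sends $\Ic$ to a monomial ideal, so $d(\Ic)\le 1$. For the ``only if'' direction, suppose some $F_{\Ic,S}$ has a root $\mu$ of multiplicity $\nu_{S,i}\ge 1$; apply $\sg=\sg_{(p_S,q_S,\mu)}$. I would argue, as in the proof of Lemma \ref{constant2} and using that $F_{\Ic,S}$ is by construction the \emph{common} factor, that $\ti(f_i,l_S)$ is divisible by $(y-\mu x^{q_S})^{\nu_{S,i}}$ for every $i$, so the transformed generators all have the form $f_{i,\sg}=x_1^{N_0}g_i$ with $g_i(0,0)=0$; hence $\Ic_\sg = x_1^{N_0}\Ic'$ where $\Ic'$ is an ideal whose Newton polygon has positive height (height $\le \nu_{S,i}$ but $\ge 1$ because $F_{\Ic,S}$ contributes a genuine factor to every generator). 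A non-principal ideal, or a principal ideal whose generator is not monomial-like, gives depth $\ge 1$, so $d(\Ic_\sg)\ge 1$ and therefore $d(\Ic)\ge 2$, i.e.\ $\Ic$ is degenerate. One subtlety to check here is that $\Ic'$ could a priori still be of the permitted monomial-like principal form $(x_1^k(y_1+h(x_1))^{\nu'})$ with $\nu'\ge 1$; but then $g_i(0,y_1)=y_1^{\nu_{S,i}}+\cdots$ forces $\nu_{S,i}=\nu'\ge 1$ for all contributing $i$, and one reruns the argument one step deeper — the height stays constant, so by Lemma \ref{constant3} the original $\Ic$ would itself be of the form $(x^ky^l(y+h(x))^\nu)$, contradicting finite codimension unless $h=0$ and the curve $y+h(x)=y$ is non degenerate, in which case $F_{\Ic,S}=(y-0)$ is excluded since face polynomials have roots in $\bc^*$. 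The main obstacle is precisely this bookkeeping: pinning down exactly which data of $\ti(\Ic,S)$ go into $F_{\Ic,S}$ versus into the $k_j$, and confirming that a root of $F_{\Ic,S}$ really is a common root of all the $\ti(f_i,l_S)$ (not merely of one of them), so that the finite-codimension hypothesis can be invoked to exclude the degenerate-looking-but-actually-fine principal case; the rest is a direct translation through Lemma \ref{Newton-algbis}.
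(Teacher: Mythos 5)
Your proposal follows essentially the same route as the paper: the right-hand condition is precisely the non-triviality of some face polynomial $F_{\Ic,S}$ (since the $k_j$ in (\ref{initialideal2}) are coprime, a common factor $y^p-\mu x^q$ with $\mu\in\bc^*$ must sit in $F_{\Ic,S}$), and Lemma \ref{Newton-algbis} then translates triviality of all face polynomials into every Newton map producing a depth-zero ideal. The paper's own proof is two sentences and silently passes over the one subtlety you rightly isolate, namely that when some $F_{\Ic,S}$ has a root $\mu$, the transformed ideal $\Ic_{\sg_{(p_S,q_S,\mu)}}$ could a priori still be of the permitted depth-zero form $\bigl(x_1^k(y_1+h(x_1))^{\nu'}\bigr)$ with $\nu'\geq 1$. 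However, your way of excluding this via Lemma \ref{constant3} is not a legitimate application of that lemma: stabilization there means the height is constant from the first Newton polygon onward, whereas in your situation the height may drop at the first step and only then become constant, so you cannot conclude that the original $\Ic$ itself has the form $(x^ky^l(y+h(x))^{\nu})$. The fix is more direct and is where finite codimension really enters: if every generator of $\Ic_{\sg}$ were divisible by $(y_1+h(x_1))^{\nu'}$ with $\nu'\geq 1$, then every $f_i$ would vanish on the image under $\sg_{(p,q,\mu)}$ of the curve $y_1=-h(x_1)$, which is a nonconstant curve through the origin (its $x$-coordinate is $\mu^{q'}t^{p}$); this contradicts the fact that $\Ic$ has support $\{0\}$. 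With that substitution your argument is complete and matches the paper's intent.
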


\begin{proof}
The condition of the proposition means that, for all faces $S$ of
$\nc (\Ic)$, the face polynomial $F_{\Ic,S}=1$. This is equivalent
to the fact that the depth is $1$. (Note that if there does exist a
$\mu$ as above, then after applying the Newton map
$\sigma_{p,q,\mu}$ we do not obtain a monomial ideal.)
\end{proof}

Note that the condition in the above proposition does not depend on the choice of the generators of the ideal.
We have already seen that, if
$\Ic =(f_1,\cdots, f_r)=(g_1,\cdots, g_s)$, then we have for all $j=1,\cdots, s$ that $\ti (g_j, l_S)$ is a $\bc$-linear combination of the $\ti (f_i,l_S)$.
Then, if $y^p-\mu x^q$ divides $\ti (f_i,l_S)$ for all $i$, it divides $\ti (g_j,l_S)$ for
$j=1,\cdots, s$ and reciprocally.


\begin{remark}
(1) For an ideal $\Ic=(f_1,\cdots, f_r)$ as in the previous
proposition, being non degenerate corresponds to the (local) notion
{\it strongly non degenerate}  for the mapping $(f_1,\cdots, f_r)$,
as introduced in \cite{VZ}.

(2) In \cite{BFS} the notion of Newton non degeneracy for ideals
in $\bc[[x_1,\cdots,x_n]]$ of finite codimension is defined. For two
variables it coincides with our definition.
\end{remark}

\begin{prop}
Write the non-trivial ideal $\Ic$ in $\bc[[x,y]]$ as $\Ic=(f) \Ic'$, where the ideal $\Ic'$ is of finite codimension. Then the ideal $\Ic$ is  non degenerate if and only if the ideal $(f)$ and the ideal $\Ic'$ are both  non degenerate.
\end{prop}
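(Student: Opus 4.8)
The plan is to exploit that each Newton map $\sg=\sg_{(p,q,\mu)}$ is a ring homomorphism, so that $\Ic_\sg=(f_\sg)(\Ic')_\sg$, and more generally $\Ic_{\Sg_n}=(f_{\Sg_n})(\Ic')_{\Sg_n}$ for every sequence of Newton maps; in other words the Newton algorithm of $\Ic$ carries along, as a product, the Newton algorithms of $(f)$ and of $\Ic'$. We may assume $\Ic'\neq(1)$, since otherwise $\Ic=(f)$ and there is nothing to prove; then $\Ic'$ is a proper ideal of finite codimension, hence not principal (a proper nonzero principal ideal has a one-dimensional zero set, so infinite codimension), and therefore $d(\Ic')=1$. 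Likewise $\Ic$ is not principal, and $f\neq0$.

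Assume first that $(f)$ and $\Ic'$ are both non degenerate, and fix an arbitrary Newton map $\sg=\sg_{(p,q,\mu)}$. Since $(f)$ is non degenerate, $f_\sg=x_1^{a}(y_1+h(x_1))^{b}u(x_1,y_1)$ with $h\in x_1\bc[[x_1]]$ and $u$ a unit (as recorded in the proof of the proposition on principal ideals above). Since $d(\Ic')=1$ we have $d((\Ic')_\sg)=0$, i.e.\ $(\Ic')_\sg$ is generated by some $x_1^{k}(y_1+h_1(x_1))^{\nu}$ with $h_1\in x_1\bc[[x_1]]$. Here the finite codimension of $\Ic'$ enters: choosing $N$ with $(x,y)^N\subseteq\Ic'$ and applying $\sg$ gives
$$(\Ic')_\sg\supseteq(\sg(x),\sg(y))^N=\big(x_1^{p},\,x_1^{q}(y_1+\mu^{p'})\big)^N=(x_1^{N\min(p,q)}),$$
because $\mu^{q'}$ is a nonzero scalar and $y_1+\mu^{p'}$ is a unit of $\bc[[x_1,y_1]]$; thus $(\Ic')_\sg$ contains a power of $x_1$. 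Now $y_1+h_1(x_1)$ is an irreducible element of the unique factorization domain $\bc[[x_1,y_1]]$ that does not divide any power of $x_1$, so $x_1^{k}(y_1+h_1)^{\nu}$ can divide a power of $x_1$ only if $\nu=0$; hence $(\Ic')_\sg=(x_1^{k})$. Therefore $\Ic_\sg=(f_\sg)(\Ic')_\sg=(x_1^{a+k}(y_1+h(x_1))^{b})$ has depth $0$. As $\sg$ was arbitrary, $d(\Ic)=\max_\sg d(\Ic_\sg)+1=1$, so $\Ic$ is non degenerate.

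For the converse I argue by contraposition, the two cases being analogous and both using unique factorization in $\bc[[x_1,y_1]]$. If $(f)$ is degenerate then $d((f))\geq2$, so some Newton map $\sg$ satisfies $d((f)_\sg)\geq1$, i.e.\ $f_\sg$ is not of the form $x_1^{k}(y_1+h)^{\nu}\cdot(\text{unit})$. Were $\Ic_\sg$ generated by some $x_1^{k}(y_1+h)^{\nu}$, then from $\Ic_\sg=(f_\sg)(\Ic')_\sg\subseteq(f_\sg)$ we would get $f_\sg\mid x_1^{k}(y_1+h)^{\nu}$, forcing $f_\sg=x_1^{a}(y_1+h)^{b}\cdot(\text{unit})$ --- a contradiction; hence $d(\Ic_\sg)\geq1$ and $d(\Ic)\geq2$. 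If instead $\Ic'$ is degenerate then some Newton map $\sg$ satisfies $d((\Ic')_\sg)\geq1$, i.e.\ $(\Ic')_\sg$ is not of the form $x_1^{k}(y_1+h)^{\nu}$. Were $\Ic_\sg=(F)$ with $F=x_1^{k}(y_1+h)^{\nu}$, then $f_\sg\mid F$, say $F=f_\sg G$, and cancelling $f_\sg\neq0$ in the identity $(f_\sg)(\Ic')_\sg=(f_\sg G)$ of ideals of the domain $\bc[[x_1,y_1]]$ yields $(\Ic')_\sg=(G)$ with $G\mid F$, so $G=x_1^{a}(y_1+h)^{b}\cdot(\text{unit})$ and $(\Ic')_\sg$ would be of the excluded form --- again a contradiction; hence $d(\Ic_\sg)\geq1$ and $d(\Ic)\geq2$. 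Either way $\Ic$ is degenerate.

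The only delicate step is the deduction $(\Ic')_\sg=(x_1^k)$ from ``depth $0$ plus containing a power of $x_1$'', and this is precisely where the finite codimension hypothesis on $\Ic'$ is indispensable: without it the statement is false, as one already sees for $\Ic=\big((y+x^2)^2(y+x^2+x^3)\big)=\big((y+x^2)^2\big)\cdot(y+x^2+x^3)$, whose two factors are non degenerate while $\Ic$ is not (applying $\sg_{(1,2,-1)}$ produces $\big(x_1^{6}y_1^2(y_1+x_1)\big)$, which has depth $\geq1$). Apart from this point, the argument is bookkeeping with the definition of depth and with the multiplicativity $\Ic_\sg=(f_\sg)(\Ic')_\sg$, so I do not anticipate further obstacles.
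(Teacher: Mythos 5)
Your proof is correct and follows essentially the same route as the paper: both rest on the multiplicativity $\Ic_\sg=(f_\sg)(\Ic')_\sg$ of Newton maps and on characterizing non-degeneracy by the form of the transformed ideals, with your write-up supplying the details (the UFD/cancellation step and the use of finite codimension to force $(\Ic')_\sg$ to be a power of $x_1$) that the paper leaves implicit. The only nitpick is that in your setup paragraph ``therefore $d(\Ic')=1$'' should read $d(\Ic')\geq 1$, equality being available only once non-degeneracy of $\Ic'$ is assumed, which is indeed the only place you use it.
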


\begin{proof}
The ideal $\Ic$ being  non degenerate means that, for all $(p,q,\mu)$,
$$\Ic_{\sg_{(p,q,\mu) }}=\big(x_1^N(y_1+h(x_1))^m\big).$$
This is equivalent to both
$$f_{\sg_{(p,q,\mu)}}(x_1,y_1)=x_1^{M}(y_1+h(x_1))^mf_1(x_1,y_1)$$
with $f_1(0,0)\neq 0$, and
$$\Ic'_{\sg_{(p,q,\mu)}}=(x_1^{M'}).$$
And these statements for all $(p,q,\mu)$ mean that  $(f) $ and $\Ic'$ are both  non degenerate.
\end{proof}

\medskip
  \section{Newton tree associated with an ideal}

  We collect the information of the Newton algorithm of an ideal in two different ways. The first one is the Newton tree. It keeps the tree shape of the algorithm, and the information on the successive Newton polygons. The second one, called the Newton process, will be treated in the next section. It keeps all the information of the Newton maps; the tree shape is not given explicitly but can be recovered from the data.

 \smallskip
  \subsection{Graph associated with a Newton diagram.}

  A graph associated with a Newton diagram is a vertical linear graph with vertices, edges connecting vertices and two arrows at the top and the bottom.

  If the Newton polygon is empty, that is,
  $\Dg =(N,M)+\br_+^2$,
  the graph is in Figure 4. It has one edge connecting two arrows decorated by $N$ and $M$ at the top and the bottom, respectively.

   \begin{figure}[ht]
 \begin{center}
\includegraphics{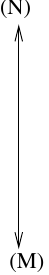}
 \caption{}
 \end{center}
\end{figure}

If the Newton polygon is $\cup_{1\leq i\leq m}S_i$, the graph has $m$ vertices $v_1,\cdots, v_m$ representing the faces $S_1, \cdots, S_m$. They are connected by edges when the faces intersect.
We add one edge at $v_1$ and at $v_m$ ended by an arrow.

  \begin{figure}[ht]
 \begin{center}
\includegraphics{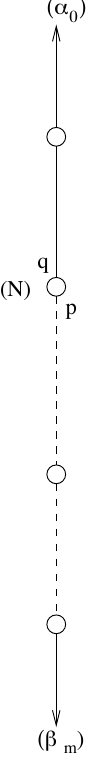}
 \caption{}
 \end{center}
\end{figure}

We decorate the vertices and the extremities of the edges near the vertices using the following rule.
Let $v$ be a vertex and $S$ be the corresponding face whose supporting line has equation $p\ag+q\bg=N$, where $(p,q)\in (\bn^*)^2$ and $\gcd (p,q)=1$.  We decorate the vertex by $(N)$. Further we decorate the extremity of the edge above the vertex with $q$ and the extremity of the edge under the vertex by $p$; we say that the decorations near $v$ are $(q,p)$. The arrows represent the  non-compact
faces with supporting lines $\{x=\ag_0\}$ and $\{y=\bg_m\}$; they are decorated with $(\ag_0)$ at the top and $(\bg_m)$ at the bottom.

\subsection{Newton tree of an ideal}

 We build the Newton tree of $\Ic$ by induction on the depth.
 If the depth is zero, the ideal is generated by a \lq monomial\rq\ $x^k(y+h(x))^{\nu}$; we define its Newton tree to be the graph as in Figure 6.

   \begin{figure}[ht]
 \begin{center}
\includegraphics{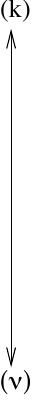}
 \caption{}
 \end{center}
\end{figure}

Let $\Ic$ be an ideal of depth $d(\Ic)$ greater than or equal to one.
We assume that we have constructed the Newton trees of ideals of depths $d<d(\Ic)$.

On one hand we have the graph of the Newton polygon of the ideal $\Ic$. Consider a vertex $v$ on this
graph. It is associated with a face $S$ of the Newton polygon of $\Ic$ with equation
$p_S \ag+q_S\bg=N_S$ and
$$ \ti(\Ic,S)=x^{a_S}y^{b_S}F_{\Ic,S}(x^{q_S},y^{p_S})\big(k_1(x^{q_S},y^{p_S}),\cdots,k_s(x^{q_S},y^{p_S})\big)$$
with $\deg k_i=d_S\geq0$.  We decorate the vertex $v$ with the pair $(N_S,d_S)\in \bn ^2$.

Now we apply the Newton maps $\sigma=\sg_{(p_S,q_S,\mu_i)}$ for each root
$\mu_i$ of the face polynomial. (If the face is dicritical we
already know that the maps $\sg_{(p_S,q_S,\mu)}$ for $\mu$ generic give a
monomial ideal of the form $(x^{N_S})$ and we don't need to perform
those Newton maps.) The transformed ideal $\Ic _{\sg}$ has depth
less than $d(\Ic)$. Then from the induction hypothesis we can
construct the Newton tree of $\Ic _{\sg}$. It has a top arrow
decorated with $N_S$. We delete this arrow and glue the edge on the
vertex $v$. The edge which is glued on the vertex $v$ is a
horizontal edge. Horizontal edges join vertices corresponding to
different Newton polygons and vertical edges join vertices
corresponding to the same Newton polygon. Note that the \lq width\rq\ of the Newton tree of $\Ic$ is precisely its depth $d(\Ic)$.



 We explain now how we decorate  the Newton tree. Let $v$ be a vertex on the Newton tree of $\Ic$. If $v$ corresponds to a  face of the Newton polygon of $\Ic$, we say that $v$ has {\it no preceding vertex}
 and we define $\cls (v)=\{v\}$.  If $v$ does not correspond to a face of the Newton polygon of $\Ic$, it corresponds to a face of the Newton polygon of some $\Ic _{\Sg}$. The Newton tree of $\Ic _{\Sg}$ has been glued on a vertex $v_1$ which is called the {\it preceding vertex} of $v$.  We note that the path between one vertex and its preceding vertex contains exactly one horizontal edge but may contain some vertical edges, for example as in Figure 7.

    \begin{figure}[ht]
 \begin{center}
\includegraphics{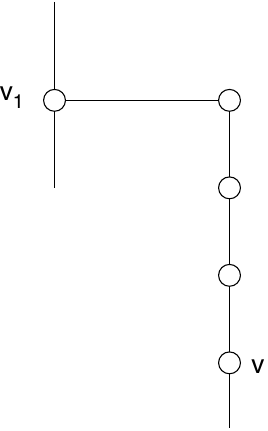}
 \caption{}
 \end{center}
\end{figure}

 If $v_1$ does not correspond to
 a face of the polygon of $\Ic$, we can consider its preceding vertex, and so on. We define
 $\cls(v)=\{v_i, \cdots, v_2,v_1,v\}$ where $v_j, 2\leq j\leq i,$ is the preceding vertex of $v_{j-1}$, and $v_i$ corresponds to a face of the Newton polygon of $\Ic$. The final Newton tree is decorated in the following way.  Let $v$ be a vertex on the Newton tree of $\Ic$. If $\cls(v)=\{v\}$, the decorations near $v$ are not changed. If  $\cls(v)=\{v_i, \cdots, v_2,v_1,v\}$ and if the decorations near $v$ on the Newton tree where $\cls(v)=\{v_{i-1}, \cdots, v_2,v_1,v\}$ are $(m,p)$, then after the gluing on $v_i$ they become $(m+p_iq_ip_{i-1}^2\cdots p_1^2p,p)$. The decorations of the arrows are not changed. We will see later why these decorations are useful.

 The vertices decorated with $(N,d)$ with $d>0$ (corresponding to dicritical faces) are called {\it dicritical vertices}. We denote by $\mathcal{D}_{\Ic}$ the set of dicritical vertices of $\Ic$.

 Note that if the ideal is principal, its Newton tree has all vertices decorated with $(N,0)$. In this case
 we decorate them simply with $(N)$.



Usually we do not write the decoration of arrows decorated with $(1)$.

\bigskip
\noindent
{\bf Example 1 {\rm (continued)}.} In Figure 8 we draw the graphs associated with the occurring Newton diagrams, and the resulting Newton tree.

  \begin{figure}[ht]
 \begin{center}
\includegraphics{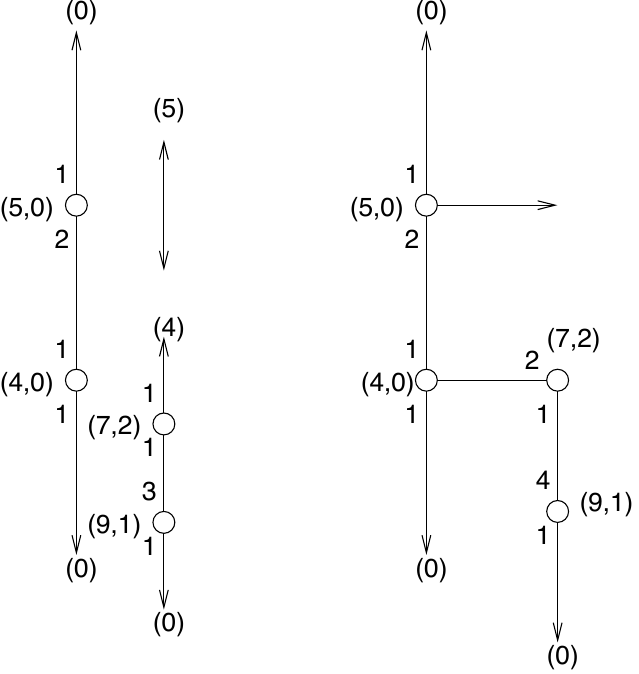}
\caption{}
 \end{center}
 \end{figure}

\bigskip
\noindent
{\bf Example 3.}
We consider in $\bc[[x,y]]$ the ideal
$$\Ic=\left(y^2((x^2+y^3)^2+xy^5)(x^2-y^3),x^8y+x^{12}\right).$$
Its Newton polygon is given in Figure 9.

  \begin{figure}[ht]
 \begin{center}
\includegraphics{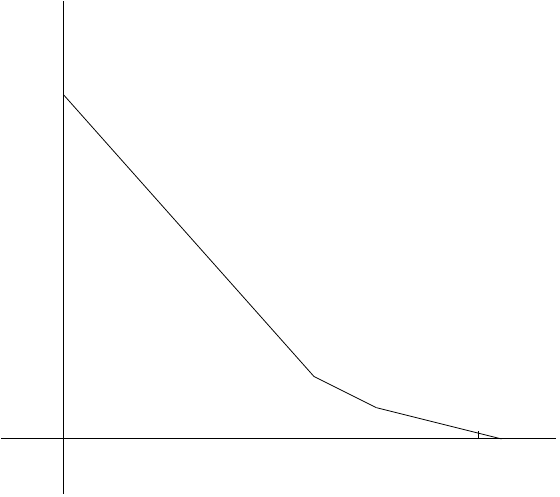}
\caption{}
 \end{center}
 \end{figure}

The faces $S_1$, $S_2$ and $S_3$ have supporting lines with equations $3\alpha+2\beta=22$, $\alpha+2\beta=10$ and $\alpha+4\beta=12$, respectively. The initial ideals of $\Ic$ with respect to these segments are
$$\ti (\Ic,S_1) = \left(y^2(x^2+y^3)^2(x^2-y^3)\right), \ \ \ti (\Ic,S_2) = x^6y (y,x^2) \quad\text{and}\quad \ti (\Ic,S_3)= \left(x^8(y+x^4)\right).$$
Only $S_2$ is dicritical. The face polynomials of $S_1$ and $S_3$ are $F_{\Ic,S_1}=(x+y)^2(y-x)$ and $F_{\Ic,S_3}=y+x$.

First we consider the Newton map $\sg_{(p,q,\mu)}=\sg_{(3,2,1)}$ associated with $S_1$ and $\mu=1$. It is given by the substitution
$$x=x_1^3, \qquad\qquad y=x_1^2(y_1+1).$$
The image ideal $\Ic_1$ is given by
$$
\begin{aligned}
\Ic_1 &= \left(x_1^4 ((x_1^6+x_1^6(y_1+1)^3)^2+x_1^{13}(y_1+1)^5)(x_1^6-x_1^6(y_1+1)^3),
  x_1^{24}(x_1^2(y_1+1)+x_1^{12})\right)\\
&= x_1^{22}(y_1,x_1^4).
\end{aligned}
$$
Its Newton polygon has one face with equation $\ag+4\bg=26$ which  is dicritical (of degree $1$) and has constant face polynomial. Hence we stop this part of the procedure.

Next we consider the Newton map $\sg_{(p,q,\mu)}=\sg_{(3,2,-1)}$ associated with the same face $S_1$ but for $\mu=-1$. It is given by the substitution
$$x=-x_1^3, \qquad\qquad y=x_1^2(y_1-1).$$
The image ideal $\Ic_1$ is given by
$$
\begin{aligned}
\Ic_1 &= \left(x_1^4 ((x_1^6+x_1^6(y_1-1)^3)^2-x_1^{13}(y_1-1)^5)(x_1^6-x_1^6(y_1-1)^3),
  x_1^{24}(x_1^2(y_1-1)+x_1^{12})\right)\\
&=\left(x_1^{22}((y_1^3-3y_1^2+3y_1)^2 - x_1(y_1-1)^5),x_1^{26}\right)\\
&= x_1^{22}(9y_1^2+x_1+ \dots,x_1^4),
\end{aligned}
$$
Its Newton polygon has a unique face $S'$ with equation $2\ag+\bg=46$. It has initial ideal $\ti (\Ic,S')=(9y_1^2+x_1)$. Hence it is not dicritical, and has face polynomial $F_{\Ic,S'}=y+\frac19 x$.

We continue with the Newton map $\sg_{(p,q,\mu)}=\sg_{(2,1,-\frac19)}$ associated with $S'$ and $\mu=-\frac19$. It is given by the substitution
$$x_1=-\frac19x_2^2, \qquad\qquad y_1=x_2(y_2-\frac19).$$
The image ideal $\Ic_2$ is given by
$$
\Ic_2 = x_2^{44} \left(9x_2^2(y_2-\frac19)^2-\frac19x_2^2+ \dots,x_2^8\right) = x_2^{46}(54y_2+x_2+ \dots,x_2^6).
$$
Its Newton polygon has only one face with equation $\ag+\bg=47$ which is not dicritical.
We perform the following Newton maps:
$$x_2=x_3, \qquad\qquad y_2=x_3(y_3-1/54),$$
 $$x_3=x_4, \qquad\qquad y_3=x_4(y_4-55/17496),$$
$$x_4=x_5, \qquad\qquad y_4=x_5(y_5-10/19683),$$
$$x_5=x_6, \qquad\qquad y_5=x_6(y_6-7/93312),$$
$$x_6=x_7, \qquad\qquad y_6=x_7(y_7-91/9565938),$$
and we arrive at a dicritical face of degree 1.


Since the face $S_2$ is dicritical (of degree $1$) with constant face polynomial we do not handle it further.
Finally we consider the Newton map $\sg_{(p,q,\mu)}=\sg_{(1,4,-1)}$ associated with $S_3$ and $\mu=-1$. It is given by the substitution
$$x=x_1, \qquad\qquad y=x_1^4(y_1-1).$$
The image ideal $\Ic_1$ is given by
$$
\begin{aligned}
\Ic_1 &= \left(x_1^8 ((x_1^2+x_1^{12}(y_1-1)^3)^2+x_1^{21}(y_1-1)^5)(x_1^2-x_1^{12}(y_1-1)^3),
  x_1^{8}(x_1^4y_1)\right)\\
&=(x_1^{14},x_1^{12}y_1)\\
&= x_1^{12}(x_1^2,y_1).
\end{aligned}
$$
Its Newton polygon has only one face with equation $\ag+2\bg=14$ which  is dicritical (of degree $1$) and has constant face polynomial. Hence we stop the procedure.
The depth of $\Ic$ is $8$; its Newton tree is given in Figure 10.

\bigskip
  \begin{figure}[ht]
 \begin{center}
\includegraphics{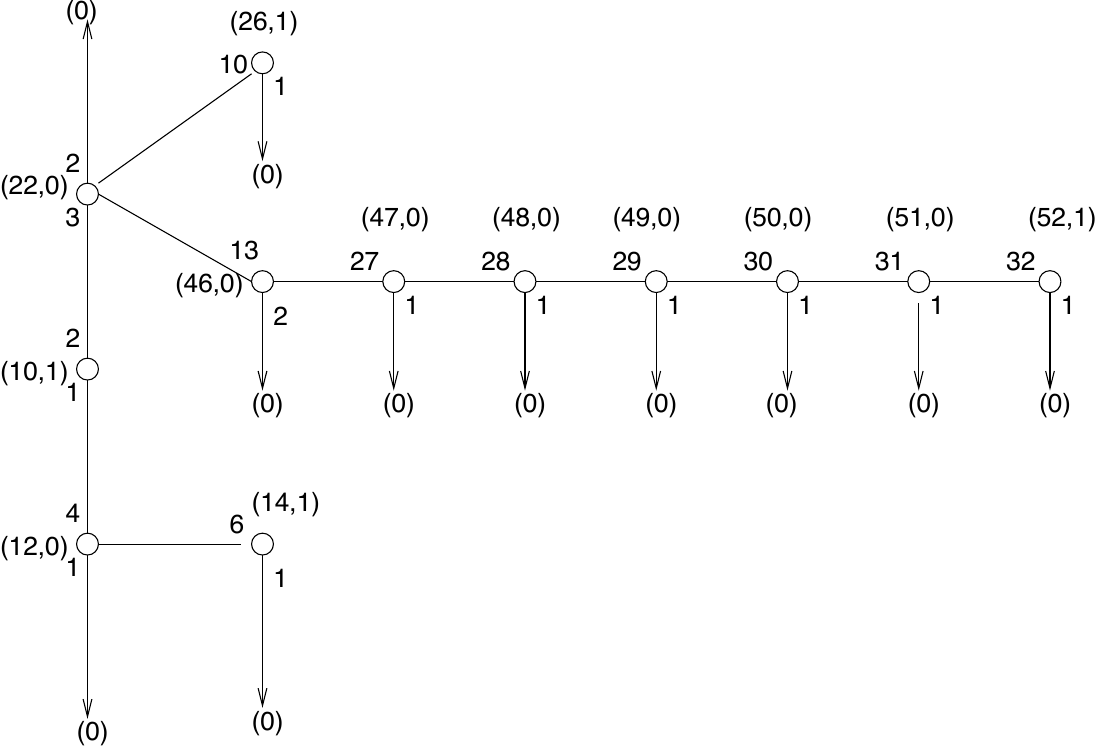}
\caption{}
 \end{center}
 \end{figure}

\bigskip

\bigskip
\noindent
{\bf Example 4.}
We consider in $\bc[[x,y]]$ the ideal
$$\Ic=\left((x-y)^2x^3,(x-y)^2y^3,(x-y)x^6\right) .$$ Its Newton polygon has
 one face $S$ whose supporting line has equation $\alpha+\beta=5$, and with initial ideal
$$\ti (\Ic,S)=\left((x-y)^2x^3,(x-y)^2y^3\right) .$$
Hence it is dicritical (of degree $3$) and its face polynomial is $(x-y)^2$.

We consider the Newton map $\sg_{(p,q,\mu)}=\sg_{(1,1,1)}$ associated with $S_1$ and $\mu=1$. It is given by the substitution
$$x=x_1, \qquad\qquad y=x_1(y_1+1).$$
The image ideal $\Ic_1$ is given by
$$
\begin{aligned}
\Ic_1 &= \left(x_1^5y_1^2, x_1^5y_1^2(y_1+1)^3,x_1^7y_1\right)\\
&= x_1^5y_1(y_1,x_1^2).
\end{aligned}
$$
Its Newton polygon has one  face with equation $\ag+2\bg=9$ which is  is dicritical (of degree $1$) and has constant face polynomial. Hence we stop the procedure.
The depth of $\Ic$ is $2$; its Newton tree is given in Figure 11.

\bigskip
 \begin{figure}[ht]
 \begin{center}
\includegraphics{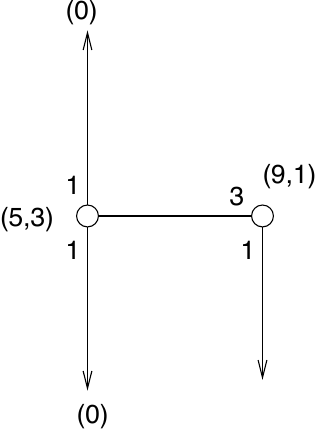}
\caption{}
 \end{center}
 \end{figure}
\bigskip
\subsection{Combinatorial properties of  Newton trees}

\begin{prop}
Consider a Newton tree of an ideal. If $v_0$ is the preceding vertex
of $v$, decorated respectively by $(q_0,p_0)$ and $(q,p)$, we have
$$q=p_0q_0p+\tilde{m},$$
where ($\tilde{m},p)$ are the decorations of $v$ on the Newton tree where $\mathcal{S}(v)=\{v\}$.
\end{prop}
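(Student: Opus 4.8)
The plan is to unwind the recursive decoration rule of Section~3.2 and to compare, gluing by gluing, the corrections made to the decoration of $v$ with those made to the decoration of $v_0$. Write $\cls(v)=\{v_i,\dots,v_2,v_1,v\}$ with $v_1=v_0$, let $p_j$ denote the lower decoration of $v_j$ (an invariant of the underlying face, hence unchanged by gluings; in particular $p_1=p_0$), and let $\tilde q_j$ denote the upper decoration of $v_j$ read on the tree where $\cls(v_j)=\{v_j\}$. By construction $v$ is born as a root of the Newton tree of some $\Ic_{\Sg}$ and $v_0$ as a root of the Newton tree of $\Ic_{\Sg^{(1)}}$, where $\Sg^{(1)}$ is $\Sg$ with its last Newton map deleted. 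Once the tree of $\Ic_{\Sg}$ has been glued onto $v_0$, the vertex $v$ sits in the tree of $\Ic_{\Sg^{(1)}}$ beside $v_0$, and from then on the two vertices undergo exactly the same sequence of gluings---onto $v_2$, then onto $v_3$, \dots, then onto $v_i$---as this tree is assembled into the Newton tree of $\Ic$. Call the step at which $v_k$ enters the chains the $k$-th event.

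First I would read off from the decoration rule the effect of each event. At the first event the upper decoration of $v$ jumps by $p_1\tilde q_1\,p=p_0\tilde q_1\,p$, whereas that of $v_0$ is left untouched, since $v_0$ is the vertex one glues onto and not a vertex of the glued subtree. At the $k$-th event for $k\ge 2$, the tree glued onto $v_k$ contains both vertices, with the chain $\{v_{k-1},\dots,v_1\}$ above $v_0$ and the chain $\{v_{k-1},\dots,v_1,v\}$ above $v$; moreover the decoration of $v_k$ entering the rule is its bare one $\tilde q_k$, because $v_k$ is at that instant a root of the tree onto which one glues. Hence the rule adds $p_k\tilde q_k\,p_{k-1}^2\cdots p_2^2\,p_1$ to the upper decoration of $v_0$ and $p_k\tilde q_k\,p_{k-1}^2\cdots p_2^2\,p_1^2\,p$ to that of $v$. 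The point is that at each event $k\ge 2$ the increment of the decoration of $v$ equals $p_1p=p_0p$ times the increment of the decoration of $v_0$.

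It remains only to sum over the events. Writing $\Delta_k$ for the increment of the decoration of $v_0$ at the $k$-th event ($k\ge 2$), the decoration of $v_0$---which starts at $\tilde q_1$ and is modified only by the events $k\ge 2$---equals $q_0=\tilde q_1+\sum_{k\ge 2}\Delta_k$, while the decoration of $v$ equals $q=\tilde m+p_0\tilde q_1\,p+\sum_{k\ge 2}p_0p\,\Delta_k=\tilde m+p_0p\bigl(\tilde q_1+\sum_{k\ge 2}\Delta_k\bigr)=\tilde m+p_0q_0\,p$, as asserted. I do not anticipate a genuine difficulty here; the only thing to watch is the bookkeeping---checking that the gluing events seen by $v$ and by $v_0$ are literally the same, that the decoration of $v_k$ used in the $k$-th correction is the bare decoration $\tilde q_k$ and not its eventual value, and that the indices of the rule $(m+p_iq_ip_{i-1}^2\cdots p_1^2p,\,p)$ line up correctly. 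The same computation could instead be organised as an induction on $\lvert\cls(v_0)\rvert$, but the event-by-event comparison seems the most transparent.
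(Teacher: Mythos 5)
Your proposal is correct and is essentially the paper's argument: the paper organises the same bookkeeping as an induction on $\lvert\cls(v)\rvert$, applying the decoration rule once for the outermost gluing ($q_0=p_iq_ip_{i-1}^2\cdots p_1^2p_0+\tilde q_0$, $q=p_iq_ip_{i-1}^2\cdots p_1^2p_0^2p+\tilde q$) and invoking the inductive hypothesis $\tilde q=p_0\tilde q_0p+\tilde m$, which is exactly your event-by-event sum in telescoped form. Your key observations---that each gluing increments $v$'s decoration by $p_0p$ times the increment of $v_0$'s, and that the decoration of $v_k$ entering the rule is its bare value---are the correct reading of the construction, so there is no gap.
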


\begin{proof}
We work by induction on the number of elements of $ \mathcal{S}(v)$.
If $ \mathcal{S}(v)=\{v_0,v\}$, then $q=p_0q_0p+\tilde{m}$ is the definition of $q$.
Assume that $ \mathcal{S}(v)=\{v_i,v_{i-1},\cdots, v_0,v\}$, and that on the Newton tree where
$ \mathcal{S}(v)=\{v_{i-1},\cdots, v_0,v\}$, the decorations $(\tilde{q}_0,p_0)$ of $v_0$ and $(\tilde{q},p)$ of $v$ satisfy $\tilde{q}=p_0\tilde{q}_0p+\tilde{m}$.

We have (by definition)
$$q_0=p_iq_ip_{i-1}^2\cdots p_1^2p_0+\tilde{q}_0,$$
$$q=p_iq_ip_{i-1}^2\cdots p_1^2p_0^2p+\tilde{q},$$
and hence
$$q=p_iq_ip_{i-1}^2\cdots p_1^2p_0^2p+p_0\tilde{q}_0p+\tilde{m}
=p_0p(p_iq_ip_{i-1}^2\cdots p_1^2p_0+\tilde{q}_0)+\tilde{m}
= p_0pq_0+\tilde{m}.$$

\end{proof}

\begin{definition}
Consider a path on a Newton tree. We say that a number is {\it adjacent} to this path if it is a decoration near a vertex $v$ on the path, on an edge  issued from the vertex $v$, not belonging to the path.
\end{definition}

If $v$ and $w$ are vertices and $f$ an arrow on a Newton tree, we denote by $\rho_{v,w}$ (resp. $\rho_{v,f}$) the product of the numbers adjacent to the path between the vertices $v$ and $w$ (resp. the path between the vertex $v$ and the arrow $f$).

\begin{prop}\label{decorationN}
The decoration $(N_v)$ of a vertex $v$ on a Newton tree of an ideal $\Ic$ is  equal to
$$N_v= \sum_{f\in\mathcal{F}}\rho_{v,f}m(f) +\sum_{w\in \mathcal{D}_{\Ic}} \rho_{v,w}d_w ,$$
where $\mathcal{F}$ denotes the set of arrows of the Newton tree, and $m(f)$ the decoration of the arrow $f$.
\end{prop}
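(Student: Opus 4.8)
The plan is to argue by induction on the depth $d(\Ic)$. For $d(\Ic)=0$ the ideal is $(x^k(y+h(x))^\nu)$ and the Newton tree is the standard graph of Figure~6: it has no dicritical vertex, and the identity $N_v=\sum_{f\in\mathcal F}\rho_{v,f}m(f)$ is read off directly. Assume now $d(\Ic)=d\geq 1$ and that the statement holds for all ideals of depth $<d$. Recall that the Newton tree $T$ of $\Ic$ is obtained from the graph $G$ of $\nc(\Ic)$ — whose vertices are the face vertices $v_S$, decorated $(N_S,d_S)$ and carrying the decorations $(q_S,p_S)$ near $v_S$, and whose two arrows carry $\ag_0$ and $\bg_m$ — by gluing, for each face $S$ of $\nc(\Ic)$ and each root $\mu$ of $F_{\Ic,S}$, the Newton tree $T_{S,\mu}$ of $\Ic_{\sg}$ with $\sg=\sg_{(p_S,q_S,\mu)}$ (of depth $<d$) onto $v_S$, after relabelling the $(q,p)$-decorations of $T_{S,\mu}$ by the rule used in the construction; the vertex decorations $(N_u,d_u)$ and the arrow decorations are not modified. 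Fix a vertex $v$ of $T$.

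\emph{First case: $v=v_S$ is a face vertex of $\nc(\Ic)$, so $N_v=N_S$.} Split $\mathcal F$ and $\mathcal D_{\Ic}$ according to whether an element lies in $G$ (the two outer arrows, and the dicritical face vertices) or in a glued subtree $T_{S',\mu'}$. Using the Proposition relating the decorations of a vertex and of its preceding vertex, one checks that $\rho_{v_S,a_{\mathrm{top}}}=p_S$, $\rho_{v_S,a_{\mathrm{bot}}}=q_S$, and that for $u$ inside $T_{S',\mu'}$ the path-product $\rho_{v_S,u}$ factors through $\rho_{v_S,v_{S'}}$ and a factor computed inside $T_{S',\mu'}$. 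Feeding the induction hypothesis for $\Ic_{\sg'}$ and the height formula \eqref{hauteur0} for $\Ic_{\sg'}$ into the contribution of each $T_{S',\mu'}$, that contribution collapses to an expression depending only on the $\nc(\Ic)$-data at $v_{S'}$ (essentially $\rho_{v_S,v_{S'}}$ times $p_{S'}(d_{S'}+\sum_i\nu_{S',i})$). What remains is then a purely Newton-polygon identity
$$N_S=p_S\ag_0+q_S\bg_m+\sum_{S'}\rho_{v_S,v_{S'}}\Big(d_{S'}+\sum_i\nu_{S',i}\Big),$$
a direct computation on $\Dg(\Ic)$ that refines \eqref{hauteur0}.

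\emph{Second case: $v$ lies in a glued subtree $T_{S,\mu}$.} Apply the induction hypothesis in the unglued Newton tree of $\Ic_{\sg}$: this writes $N_v$ as a sum over $\mathcal F(T_{S,\mu})$ and $\mathcal D_{\Ic_{\sg}}$, one of the terms being $\rho^{\sg}_{v,a}\cdot N_S$, where $a$ is the top arrow of $T_{S,\mu}$ (carrying the decoration $N_S$) and $\rho^{\sg}$ denotes path-products in the unglued tree. In $T$ the arrow $a$ is deleted and its edge is reattached at $v_S$, while new arrows and dicritical vertices become visible from $v$, all reached through $v_S$. Comparing $\rho^{T}_{v,\cdot}$ with $\rho^{\sg}_{v,\cdot}$ via the relabelling rule, and treating the part of $T$ above $v_S$ as in the first case, one finds that the contribution of the newly visible arrows and vertices together with the change in the old path-products reproduces exactly the term $\rho^{\sg}_{v,a}\cdot N_S$, so the claimed right-hand side again equals $N_v$.

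\emph{The main difficulty} is precisely this bookkeeping: showing that the path-products $\rho_{v,\cdot}$ transform under gluing-and-relabelling so that, for a face vertex, the subtree contributions re-assemble into $N_S$, and, for a deeper vertex, they reproduce the term $\rho^{\sg}_{v,a}N_S$ of the deleted top arrow. The specific relabelling rule for the $(q,p)$-decorations and the height formula \eqref{hauteur0} are what make this work, while the Newton-polygon identity in the first case is elementary once set up. Alternatively, once the comparison of the Newton tree of $\Ic$ with that of a generic member of $\Ic$ is available, one can reduce to the principal case, where the statement is the classical expression of $\ord$ of a curve along a divisor in terms of its splice diagram.
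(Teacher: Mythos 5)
Your proof follows the same route as the paper's, but organized differently.  Both arguments rest on the height formula~\eqref{hauteur0}, the $(q,p)$\nobreakdash-relabelling rule, and an inductive bookkeeping of the path\nobreakdash-products~$\rho$.  The paper first isolates, by induction on depth, a preliminary lemma expressing the height $\vert S_v\vert_2$ of the face at a vertex $v$ as $\tfrac1q\bigl(\sum\rho_{v,f}m(f)+\sum\rho_{v,w}d_w\bigr)$ over arrows and dicriticals past $v$, and only then proves the proposition by induction on the \emph{chain length} $\lvert\mathcal S(v)\rvert$, using that lemma in the base case $\mathcal S(v)=\{v\}$.  You instead run a single induction on $d(\Ic)$, so the base case is trivial and the entire content of the preliminary lemma has to be re\nobreakdash-derived inside your Case~1; the ``purely Newton-polygon identity'' $N_S=p_S\ag_0+q_S\bg_m+\sum_{S'}\rho_{v_S,v_{S'}}(d_{S'}+\sum_i\nu_{S',i})$ you arrive at is correct (it is precisely the paper's $N_v=p\ag'+q\bg'$ after substituting widths and heights), though the parenthetical ``$\rho_{v_S,v_{S'}}$ times $p_{S'}(\dots)$'' has a spurious factor $p_{S'}$.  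Your Case~2 cancellation --- newly visible contributions plus the change in the old $\rho$'s reproducing $\rho^{\sg}_{v,a}N_S$ --- is the right statement and matches the paper's manipulation of $N_0=\tilde h(\Delta)p_0q_0+N_0^1$.  The hard relabelling bookkeeping is left at the same level of sketch in both proofs.  Your closing alternative via Proposition~\ref{ideal-gencurve} is valid and not circular: the $d_w$ unit\nobreakdash-multiplicity arrows added at a dicritical $w$ satisfy $\rho_{v,\text{arrow}}=\rho_{v,w}$, which gives the $\sum_w\rho_{v,w}d_w$ term directly.  However one still needs the principal\nobreakdash-ideal case as input, and the paper obtains that case as a special instance of the same general argument rather than by citing a splice\nobreakdash-diagram formula.
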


\begin{proof}
If we consider an arrow $f$ on a tree, it is attached to a vertex
$w$. We will denote $\mathcal{S}(f)= \mathcal{S}(w)$. Let $S$ be a
face of some Newton polygon. We denote by $\vert S\vert _1$ the
width of $S$ and by $\vert S\vert _2$ its height. To prove the
proposition we use the following lemma.

\begin{lemma}\label{hauteur} Let $v$ be a vertex on the Newton tree of an ideal $\Ic$ and $S_v$ the corresponding face of some Newton polygon. We have
$$\vert S_v\vert _2=\frac1q \left(\sum_{f\in \mathcal{F}\vert v\in S(f)}\rho_{v,f}m(f)+\sum_{w\in \mathcal{D}_{\Ic}\vert v\in \mathcal{S}(w)}\rho_{v,w}d_w\right),$$
where $(q,p)$ are the decorations near $v$.
\end{lemma}

First we prove the lemma.
\begin{proof}
We use induction on the depth. Let $\Ic$ be an ideal with
$d(\Ic)=1$. Let $S$ be a face of its Newton polygon corresponding to
a vertex $v$ on the (in this case vertical) Newton tree of $\Ic$
with
$$ \ti(\Ic,S)=\big(x^{a_S}y^{b_S}F_{\Ic,S}(x^{q_S},y^{p_S})\big)$$
or
$$ \ti(\Ic,S)=x^{a_S}y^{b_S}F_{\Ic,S}(x^{q_S},y^{p_S})\left(k_1(x^{q_S},y^{p_S}),\cdots,k_s(x^{q_S},y^{p_S})\right)$$
as in (\ref{initialideal1}) or (\ref{initialideal2}), with
$F_{\Ic,S}(x,y)=\prod_{i}(y-\mg_{S,i}x)^{\nu_{S,i}}$. Then
$$\vert S\vert _2=p_S\, (d_S+\sum_i \nu_{S,i}) .$$
For each arrow $f_i$ of multiplicity $m(f_i)=\nu_{S,i}$ we have $\rho_{v,f_i}=q_Sp_S=\rho_{v,v}$.
Hence
$$\vert S\vert _2=\frac1{q_S} \left(\rho_{v,v}d_S+\sum_i \rho_{v,f_i}m(f_i)\right) .$$

Now we assume that $\Ic$ is an ideal of depth $d(\Ic)>1$ and that the lemma is true for depth $d<d(\Ic)$.
Let $S$ be a face of the Newton polygon of $\Ic$ corresponding to a vertex $v$ on the Newton tree.
We have again
$$\vert S\vert _2=p_S\, (d_S+\sum_i \nu_{S,i}) ,$$
but now $ \nu_{S,i}=h_i+l_i$, where $h_i$ is the height of the
Newton polygon of the ideal $\Ic_{\sg_i}=\Ic_{\sg_{(p_S,q_S,\mu_i)}}$ and $l_i$ the maximal
power of $y_1$ which divides $\Ic_{\sg_i}$. Denote by $\tilde{f}_i$
the branch corresponding to $y_1^{l_i}$. We have
$\rho_{v,\tilde{f}_i}=p_Sq_S=\rho_{v,v}$. Now we apply the induction
hypothesis to the faces of the Newton polygon of the $\Ic_{\sg_i}$
and a simple computation on the $\rho$'s yields the formula.
\end{proof}

We prove the proposition by induction on the number of elements of
$\mathcal{S}(v)$. If $\mathcal{S}(v)=\{v\}$, then $v$ is on the first
Newton polygon. Let $S$ be the face of the Newton polygon
corresponding to $v$, with $p\alpha+q\beta=N_v$ the equation of the
supporting line and $(\alpha', \beta')$ the origin of $S$. We have
$$\alpha'=\alpha_0+\sum _{i} \vert S_i\vert _1, \qquad \beta' =\vert S\vert _2+\sum _j\vert S_j\vert _2 +\beta_m,$$
where $i$ ranges over the indices of faces before  $S$, and
$j$ over the indices of faces after $S$. Since $N_v=p\alpha'
+q\beta'$ the property for $\mathcal{S}_v=\{v\}$ follows from Lemma
\ref{hauteur}, applied to all faces of the first Newton polygon, and
the fact that $\vert S_i\vert _1 = \frac{q_i}{p_i} \vert S_i\vert
_2$. (Remember that a positive value of $\alpha_0$ or $\beta_m$
corresponds to a decoration of an arrow.)

Now let $\mathcal{S}_v=\{v_{k},\cdots ,v_1,v_0, v\}$ and hence
$\mathcal{S}_{v_0}=\{v_{k},\cdots ,v_1,v_0\}$. We apply the
induction hypothesis to $v_0$ and write the decoration $(N_0)$ of
$v_0$ as the sum of the contributions of all arrows and dicriticals
\lq on the right\rq\ of $v_0$ and the sum $N_0^1$ of all other
contributions. More precisely
$$N_0=\tilde{h}(\Delta)p_0q_0+N_0^1 ,$$
where $\Delta$ is the Newton polygon to which belongs the face $S$
corresponding to $v$ and
$$\tilde{h}(\Delta)=\sum _i \vert S_i\vert _2+ \vert S\vert _2+\sum _j\vert S_j\vert _2 +\beta'_m.$$
Here we use again Lemma \ref{hauteur}, $i$ and $j$ are as before and $\beta'_m$ is the
decoration of the bottom arrow in the diagram of $\Delta$.
Using the equation $p\alpha+\tilde{m}\beta=N$ of the line supporting
$S$, we have
$$
\begin{aligned}
N&=p\left(N_0+\sum _i \vert S_i\vert _1)+ \tilde{m}(\vert S\vert _2+\sum _j\vert S_j\vert _2 +\beta'_{m}\right) \\
&=p\left(N_0^1+\tilde{h}(\Delta)p_0q_0\right)+p\sum _i \vert S_i\vert
_1 +\tilde{m}\left(\vert S\vert _2+\sum _j\vert S_j\vert _2
+\beta'_{m}\right)\\
&=pN_0^1+(\tilde{m}+p_0q_0p)\left(\vert S\vert _2+\sum _j\vert S_j\vert
_2 +\beta'_{m}\right) +p \left(\sum_i \frac{\vert S_i\vert _2}{p_i}(\tilde
{m_i}+p_0q_0p_i)\right),
\end{aligned}
$$
where $p_i\alpha+\tilde{m}_i\beta=N_i$ is the equation of the line
supporting $S_i$.
 Using Lemma \ref{hauteur} and the fact that
$\tilde{m}+p_0q_0p=q$ and  $\tilde {m}_i+p_0q_0p_i= q_i$, we obtain
the result.
\end{proof}

\begin{remark}
 From this proposition we see that when the tree is constructed, the only decorations of the  vertices which are  needed  are the $d_v$, because the $N_v$ can be computed from the $d_v$. But, anyway,  we have to keep in mind that we know the $N_v$ when $v$ is constructed.
\end{remark}





\subsection{Comparison of the Newton tree of an ideal and the Newton tree of a generic curve of the ideal}

 In the sequel we will consider a {\em generic curve} of a non-principal ideal, given by a $\bc$-linear combination with generic coefficients of the generators of the ideal. This notion depends in fact on the chosen generators of the ideal, but its properties with respect to Newton trees are independent of that choice.

\begin{prop}\label{ideal-gencurve}
Let $\Ic =(f_1,\cdots, f_r)$ be a non-principal ideal in $\bc[[x,y]]$. 
The Newton tree of a generic curve of  $\Ic$ is obtained from the
Newton tree of $\Ic$ by adding to each dicritical vertex $v$ exactly
$d_v$ arrows with multiplicity one. The decorations of the edges and
the decorations $N$ of the vertices are the same.
\end{prop}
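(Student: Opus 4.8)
The plan is to argue by induction on the depth $d(\Ic)$, mirroring the inductive construction of the Newton tree itself. Fix a generic curve $f = \sum_i \lambda_i f_i$ with generic $\lambda_i \in \bc$. Since for each face $S$ of $\nc(\Ic)$ the initial part $\ti(f, l_S)$ is the corresponding generic linear combination of the $\ti(f_i, l_S)$, one first checks the base case: the Newton polygon $\nc(f)$ equals $\nc(\Ic)$ (genericity ensures no cancellation of the relevant coefficients), so the first Newton diagram, and hence the part of the Newton tree coming from the first polygon together with the decorations $p_S, q_S, N_S$, is identical for $f$ and for $\Ic$. The only new feature is the face polynomial: for a dicritical face $S$ of $\Ic$, writing $\ti(\Ic,S) = x^{a_S} y^{b_S} F_{\Ic,S}(x^{q_S},y^{p_S})(k_1,\ldots,k_s)$ with $\deg k_i = d_S$, the initial part of the generic curve is $\ti(f, l_S) = x^{a_S} y^{b_S} F_{\Ic,S}(x^{q_S},y^{p_S}) \, k(x^{q_S},y^{p_S})$ where $k = \sum_i \lambda_i k_i$ is a generic form of degree $d_S$. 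So $k$ is a product of $d_S$ distinct linear forms $(y - \eta_j x)$ with all $\eta_j \neq 0$ (generically distinct from each other and from the $\mu_{S,i}$), contributing $d_S$ extra simple roots to $F_{f,S} = F_{\Ic,S}\cdot k$. Each such new root $\eta_j$, being simple, produces after the Newton map $\sigma_{(p_S,q_S,\eta_j)}$ (by Lemma~\ref{Newton-alg}(3) and the following remark) an ideal $(f_\sigma)$ generated by an element $x_1^{N'}(y_1 + h(x_1))$, i.e.\ a leaf; its Newton tree is a single edge-plus-arrow of multiplicity one, decorated with $N_S$ at the top arrow, which upon gluing becomes exactly one arrow of multiplicity one attached to $v$.

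Next, for the inductive step, consider a root $\mu_i$ of $F_{\Ic,S}$. The Newton map $\sigma = \sigma_{(p_S,q_S,\mu_i)}$ sends $\Ic$ to $\Ic_\sigma$, of depth $< d(\Ic)$, and sends $f$ to $f_\sigma$. The key point is that $f_\sigma$ is (up to the monomial factor $x_1^{N_S}$) a generic curve of $\Ic_\sigma$: since $\sigma$ is a ring homomorphism, $f_\sigma = \sum_i \lambda_i (f_i)_\sigma$, and genericity of the $\lambda_i$ is preserved. One must be slightly careful that the generators $(f_i)_\sigma$ of $\Ic_\sigma$ may acquire common monomial factors or that the relevant "new" part lives after division by $x_1^{N_S}$ — here I would invoke Lemma~\ref{Newton-algbis}(3), which gives $\Ic_\sigma = (x_1^{N_S})\Ic_\sigma'$, and observe that $f_\sigma = x_1^{N_S} g$ with $g$ a generic element of (the generators of) $\Ic_\sigma'$. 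Applying the induction hypothesis to $\Ic_\sigma'$ (or directly to $\Ic_\sigma$, noting the monomial factor affects only the arrow-decoration $N$, which is tracked identically on both sides), the Newton tree of $g$ is obtained from that of $\Ic_\sigma$ by attaching $d_w$ multiplicity-one arrows to each dicritical vertex $w$, with all edge- and $N$-decorations unchanged. Gluing these trees onto $v$ exactly as in the construction — deleting the top arrow decorated $N_S$ and identifying the rest — and doing this for every root $\mu_i$ of $F_{\Ic,S}$ and every face $S$, assembles the Newton tree of $f$ from that of $\Ic$ by adding precisely the claimed arrows. Finally one checks that the decoration-updating rule $(m,p) \mapsto (m + p_iq_ip_{i-1}^2\cdots p_1^2 p, p)$ and Proposition~\ref{decorationN} (the $N_v$ are determined by the $d_v$, arrows, and edge-decorations) are blind to whether the contributing arrows come from roots of the face polynomial or from the extra generic-linear factors: in both cases the new arrows have multiplicity one and the dicritical data $d_w$ is unchanged, so all $N$-decorations of vertices and the decorations of edges agree.

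The main obstacle I expect is the bookkeeping around monomial factors and the precise sense in which $f_\sigma$ is a "generic curve of $\Ic_\sigma$": one needs that the genericity hypothesis on the coefficients $\lambda_i$ is strong enough to survive arbitrarily many Newton maps simultaneously (finitely many, by Theorem on termination of the Newton algorithm), so that at no face of no intermediate polygon does an unwanted cancellation occur and no spurious common factor of higher height appears. This is a matter of choosing the generic point in the parameter space $\bc^r$ to avoid a finite union of proper subvarieties (one for each face of each of the finitely many Newton polygons in the algorithm), and then noting — as already remarked after Proposition~\ref{ideal-gencurve} is stated — that the resulting Newton tree is independent of the particular generic choice. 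The remaining steps are the routine verifications that $\deg k = d_S$ forces exactly $d_S$ new simple roots and that each produces exactly one multiplicity-one leaf, which follow directly from the definition of $d_S$ and from Lemma~\ref{Newton-alg}.
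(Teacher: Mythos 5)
Your proposal is correct and follows essentially the same route as the paper: both hinge on the observation that $\ti(f,l_S)=h\cdot\bigl(\sum_i\lambda_i k_i(x^{q},y^{p})\bigr)$, so a dicritical face contributes $d_v$ extra simple linear factors (hence $d_v$ multiplicity-one arrows), and both then iterate this through the Newton algorithm. The only difference is presentational: you make explicit the induction on depth and the fact that a single generic choice of $(\lambda_1,\dots,\lambda_r)$ survives all finitely many Newton maps, points the paper leaves implicit.
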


\begin{proof}
Let $f=\lbg _1f_1+\cdots +\lbg_rf_r\in \bc [\lbg_1,\cdots,
\lbg_r][[x,y]]$. For generic values of $\lambda_1,\cdots, \lambda_r$
we have $\tS f=\tS \, \Ic $ and then $\nc(f)=\nc(\Ic)$.

Consider a face $S$ of the Newton polygon  $\nc (\Ic)$.
\begin{itemize}

\item If the corresponding vertex is not a dicritical vertex, then there is a non-constant polynomial $h$ such that
$$\ti (\Ic,S)=(h)$$
and
$$\ti(f,S)=(\lbg_1u_1+\cdots+\lbg_s u_s)h$$
with $u_i\in \bc$ for all $i\in \{1,\cdots,s\}$ and $u_i\neq 0$ for
at least one $i$.

\item If  the corresponding vertex is a dicritical vertex, then $\ti (\Ic,S)$ is of the form
$$\ti (\Ic,S)=(h)\big(k_1(x^q,y^p),\cdots, k_s(x^q,y^p)\big)$$
with $s\geq 2$, where $k_1,\cdots, k_s$ are homogeneous polynomials,
coprime and of the same degree $d_v \geq 1$. In this case
$$\ti(f,S)=h\big(\lambda_1 k_1(x^q,y^p)+\cdots+ \lambda_s k_s(x^q,y^p)\big).$$
The polynomial $\lambda_1 k_1(x,y)+\cdots+ \lambda_s k_s(x,y)$  is
homogeneous of degree $d_v$, and factorizes in $d_v$ factors of
multiplicity $1$ for generic $(\lbg_1,\cdots, \lbg_s)\in\bc^s $.
Hence, when $v$ is a dicritical vertex of the Newton tree of $\Ic$,
there are $d_v$ arrows connected to $v$ on the Newton tree of the
generic curve.
\end{itemize}


At any stage of the Newton algorithm for $\Ic$  we have the same two
cases. If $v$ is a dicritical vertex of the Newton tree of $\Ic$ and
$(h)=\left(x^k (y+h(x))^{\nu}\right)$, the algorithm stops and $v$
is an end of the Newton tree of $\Ic$ with an arrow of multiplicity
$\nu$. Otherwise we perform a Newton map and we go on.

The assertion on the decorations is immediate from the definition of
the decorations.

\end{proof}

\bigskip
\noindent {\bf Examples 3 and 4 {\rm (continued)}.}

The Newton tree of the generic curves are given in Figure 12 and 13,
respectively.

  \begin{figure}[ht]
 \begin{center}
\includegraphics{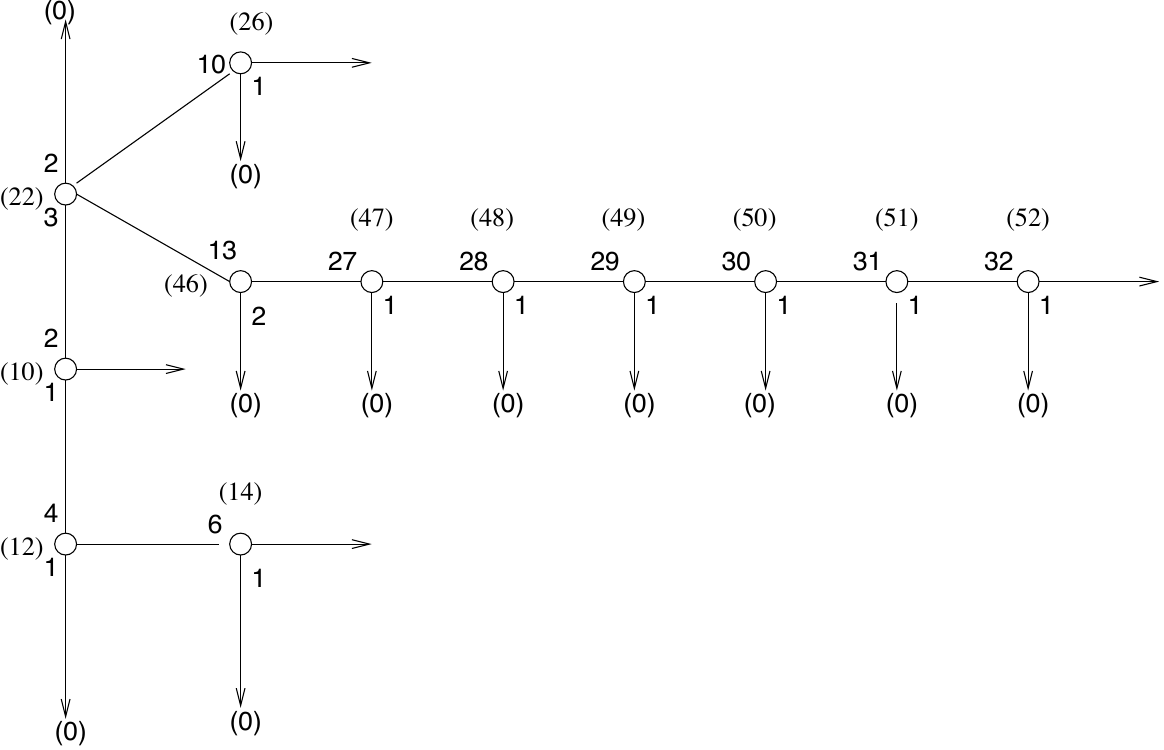}
\caption{}
 \end{center}
 \end{figure}

\bigskip

  \begin{figure}[ht]
 \begin{center}
\includegraphics{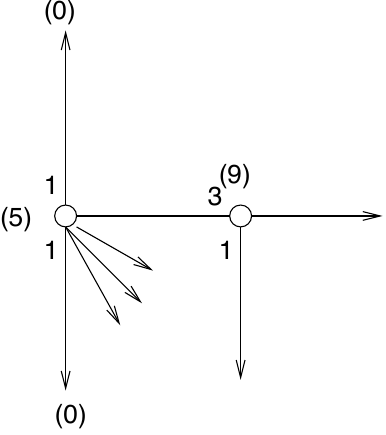}
\caption{}
 \end{center}
 \end{figure}






\section{Newton process of an ideal}

\subsection{Description of the Newton process}

The Newton process of an ideal is a set of pairs $(\Sg;Z)$, where
$\Sg$ is an ordered sequence of Newton maps (maybe empty) of length
$n_{\Sg}\leq d(\Ic) $  and either $Z\in \bn^*$ or
$Z=(y+h(x))^{\nu}$, with $h(x)\in x\bc [[x]]$ and $\nu\in\bn^*$.

\smallskip
Let $\Ic$ be a non-trivial ideal in $\bc[[x,y]]$. We assume that $\Ic$ is not
divisible by a power of $x$ (otherwise we just remember the factor
$(x^k)$).

 If $d(\Ic)=0$, then $\Ic =(y+h(x))^{\nu}$ with $h(x)\in
x\bc [[x]]$ and $\nu\in\bn^*$.  By definition the Newton process of
$\Ic$ is $\{\big(\emptyset; (y+h(x))^{\nu}\big)\}$.


If $d(\Ic)>0$, we say that a vertex on the Newton tree of $\Ic$ is
{\it final} if either it is dicritical or it has an arrow with
positive multiplicity attached (both can happen simultaneously). In
the case of depth $1$ all vertices are final. In the case of a
principal ideal all final vertices have at least one arrow attached.

  Let $v$ be a final vertex of its Newton tree.
We have $\mathcal{S}(v)=\{v_j,\cdots, v_1,v\}$, where $v_j$ is the
preceding vertex of $v_{j-1}$, \dots, and $v_1$ is the preceding vertex
of $v$. We denote by $\sg_j$ the Newton map which produces $v_{j-1}$
from $v_j$, \dots, and by $\sg_1$ the Newton map which produces $v$
from $v_1$. Now $v$ corresponds to a face $S$ of the Newton polygon
of some ideal $\Ic_\Sigma$. Let the equation of the line supporting
$S$ be $p_S\ag+q_S\bg=N_S$ and
$$ \ti(\Ic,S)=x^{a_S}y^{b_S}F_{\Ic,S}(x^{q_S},y^{p_S})\big(k_1(x^{q_S},y^{p_S}),\cdots,k_s(x^{q_S},y^{p_S})\big)$$
with $\deg k_i=d_S\geq0$ as in (\ref{initialideal1}) or
(\ref{initialideal2}).
 We denote
$$\tilde {\Sg}_v=\{(\sg_j,\cdots, \sg_1,\sg_{(p_S,q_S,\mu)};d_S)\}\cup\{\big(\sg_j,\cdots, \sg_1,\sg_{(p_S,q_S,\mu_i)};(y+h_i(x))^{\nu_i}\big)\mid i\in I\},$$
where $\mu \in \bc^*$ is generic,  $\{\mu_i\mid i\in I\}$ are  the
roots of the face polynomial of $S$ and
$\Ic_{\sg_{(p_S,q_S,\mu_i)}}=\left(x^{k_i}(y+h_i(x))^{\nu_i}\right)$
(here we keep using the coordinates $x,y$ after any Newton map).

We then define the  Newton process of $\Ic$ to be the set
$$\cup_v \tilde {\Sg}_v \cup \{(\emptyset; y^{\beta_m})\},$$
where $v$ runs through the final vertices  of $\Ic$  (we forget
about the last term when $\Ic$ is not divisible by a power of $y$.)

\bigskip
\noindent {\bf Example 1 {\rm (continued)}.} There are three final
vertices: the vertex $v_1$ decorated with $(5,0)$ and
$$\tilde{\Sg}_{v_1}=\{(\sg_{(2,1,3)};y)\},$$
the vertex $v_2$ decorated with $(7,2)$ and
$$\tilde{\Sg}_{v_2}=\{(\sg_{(1,1,-1)}, \sg_{(1,1,\mu)};2)\},$$
and the vertex $v_3$ decorated with $(9,1)$ and
$$\tilde{\Sg}_{v_3}=\{(\sg_{(1,1,-1)}, \sg_{(1,3,\mu)};1)\}.$$

Finally the Newton process of $\Ic$ is
$$\{(\sg_{(2,1,3)};y),(\sg_{(1,1,-1)}, \sg_{(1,1,\mu)};2),  (\sg_{(1,1,-1)}, \sg_{(1,3,\mu)};1)\}.$$

\bigskip
\noindent {\bf Example 3 {\rm (continued)}.} The Newton process is
$$\{ (\sg_{(3,2,1)}, \sg_{(1,4,\mu)};1), $$
$$\begin{aligned} (\sg_{(3,2,-1)},\sg_{(2,1,-1/9)}, \sg_{(1,1,-1/54)},\sg_{(1,1,-55/17496)},&\sg_{(1,1,-10/19683)}, \\ &\sg_{(1,1,-7/93312)},\sg_{(1,1,-91/9565938)},\sg_{(1,1,\mu)};1),
\end{aligned}$$
$$(\sg_{(1,2,\mu)};1),(\sg_{(1,4,-1)}, \sg_{(1,2,\mu)};1)\}.$$

\bigskip
\noindent {\bf Example 4 {\rm (continued)}.} The two vertices are
final. The Newton process is
$$\{(\sg_{(1,1,\mu)};3),(\sg_{(1,1,1)}, \sg_{(1,2,\mu)};1),(\sg_{(1,1,1)};y)\}.$$


\medskip
 \begin{prop}\label{processofproduct}
Let $\Ic=\Ic_1\Ic_2$ be a non-trivial ideal in $\bc[[x,y]]$. One can find the
Newton process of $\Ic$ from the Newton process of $\Ic_1$ and the
Newton process of $\Ic_2$. One applies the following rule to each
element of the union of the Newton processes of $\Ic_1$ and $\Ic_2$.
 \begin{enumerate}
 \item Let $$A_1=(\sg_{(p_1,q_1,\mu_1)},\cdots, \sg_{(p_n,q_n,\mu_n)};(y+h(x))^{\nu_1})$$
 be an element of the Newton process of $\Ic_1$. If there is no element
 $$A_2=(\sg_{(p_1,q_1,\mu_1)},\cdots, \sg_{(p_n,q_n,\mu_n)};(y+h(x))^{\nu_2})$$
 in the Newton process of $\Ic_2$, then $A_1$ is in the Newton process of $\Ic$.
 If there is such an element, then
 $$A=(\sg_{(p_1,q_1,\mu_1)},\cdots, \sg_{(p_n,q_n,\mu_n)};(y+h(x))^{\nu_1+\nu_2})$$
 is in the Newton process of $\Ic$.

 \item Let $$B_1=(\sg_{(p_1,q_1,\mu_1)},\cdots, \sg_{(p_n,q_n,\mu)};k_1)$$
 be an element of the Newton process of $\Ic_1$. If there is no element
 $$B_2=(\sg_{(p_1,q_1,\mu_1)},\cdots, \sg_{(p_n,q_n,\mu)};k_2)$$
 in the Newton process of $\Ic_2$, then $B_1$ is in the Newton process of $\Ic$.
 If there is such an element, then
 $$B=(\sg_{(p_1,q_1,\mu_1)},\cdots, \sg_{(p_n,q_n,\mu)};k_1+k_2)$$
 is in the Newton process of $\Ic$.
 \end{enumerate}
  There are no other elements in the Newton process of $\Ic$.
 \end{prop}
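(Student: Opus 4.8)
The plan is to track how the Newton algorithm of $\Ic = \Ic_1\Ic_2$ relates to those of $\Ic_1$ and $\Ic_2$, step by step down the tree. The essential observation is the compatibility of the construction with products: for any Newton map $\sg = \sg_{(p,q,\mu)}$ one has $\Ic_\sg = (\Ic_1)_\sg (\Ic_2)_\sg$, since a Newton map is a ring homomorphism and hence preserves products of ideals. Moreover, at the level of a single Newton polygon, if $\nc(\Ic_1)$ and $\nc(\Ic_2)$ each contribute initial ideals along a line $l$ with equation $p\ag+q\bg = N$, then the initial ideal of $\Ic$ along that line is the product of the two initial ideals (taking into account the shift $N = N_1 + N_2$ of the levels). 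This is again because $\ti(\cdot,l)$ is multiplicative on the relevant ideals, being induced by a graded-ring structure. So I would first record this multiplicativity as the base fact, splitting into the two cases of (\ref{initialideal1}) and (\ref{initialideal2}): the face polynomials multiply, $F_{\Ic,S} = F_{\Ic_1,S}\cdot F_{\Ic_2,S}$ (up to the monic normalization), so the set of roots $\mu$ of $F_{\Ic,S}$ is the union of the two root sets (with multiplicities adding when a root is shared), and the dicritical degrees add, $d_S = d_S^{(1)} + d_S^{(2)}$, since the homogeneous "$k$-part" of a product of ideals of the form $(\text{monomial})\cdot(\text{coprime homogeneous generators})$ again has that form with degree the sum.

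Next I would run the induction on $d(\Ic)$ (equivalently on $\max(d(\Ic_1), d(\Ic_2))$). When $d(\Ic) = 0$, both $\Ic_1$ and $\Ic_2$ are generated by monomials-like $x^{k_i}(y+h_i(x))^{\nu_i}$; for the product to have depth $0$ we need the same $h(x)$ (otherwise one more Newton map is forced and the depth grows), and then the exponents $\nu_i$ simply add — which is precisely case (1) of the statement with empty sequence. For the inductive step, I pick a final vertex $v$ of the Newton tree of $\Ic$, sitting over a face $S$ of the Newton polygon of some $\Ic_\Sigma$. By the multiplicativity above and the inductive hypothesis applied to $(\Ic_1)_\Sigma$ and $(\Ic_2)_\Sigma$, the set $\tilde\Sg_v$ attached to $v$ is obtained by the stated merging rule from the pieces $\tilde\Sg_v^{(1)}$ and $\tilde\Sg_v^{(2)}$: the term $(\sg_j,\dots,\sg_1,\sg_{(p_S,q_S,\mu)}; d_S)$ splits as $(\dots;d_S^{(1)} + d_S^{(2)})$, matching rule (2); and for each root $\mu_i$ of $F_{\Ic,S}$, the element $(\sg_j,\dots,\sg_1,\sg_{(p_S,q_S,\mu_i)}; (y+h_i(x))^{\nu_i})$ is accounted for by rule (1), with $\nu_i$ being the sum of the contributions from $\Ic_1$ and $\Ic_2$ whenever $\mu_i$ is a common root, and coming from just one of them otherwise — here I use Lemma \ref{Newton-algbis}(3) to identify the relevant multiplicities. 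The trailing term $(\emptyset; y^{\bg_m})$ is handled the same way, with $\bg_m = \bg_m^{(1)} + \bg_m^{(2)}$ the bottom exponents adding. Finally I would check there are no spurious elements: every element of the Newton process of $\Ic$ arises from a final vertex (or the bottom arrow), and by the inductive hypothesis its Newton-map prefix must already appear for $\Ic_1$ or $\Ic_2$, so the merging rule is exhaustive.

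The main obstacle is the bookkeeping at a shared dicritical/final vertex: one must be careful that when $\mu_i$ is a root of both $F_{\Ic_1,S}$ and $F_{\Ic_2,S}$, the Newton map $\sg_{(p_S,q_S,\mu_i)}$ applied to $\Ic = \Ic_1\Ic_2$ produces an ideal whose $y$-adic valuation and remaining factor are exactly those predicted by multiplying the images of $\Ic_1$ and $\Ic_2$ — this is where the "$(y+h(x))^{\nu_1+\nu_2}$" really comes from, and it needs the fact (from Lemma \ref{Newton-alg} and the proof of Lemma \ref{constant2}) that after stabilization the common factor is the product of the common factors. A secondary subtlety is that the depth of $\Ic$ can exceed both $d(\Ic_1)$ and $d(\Ic_2)$ only in the degenerate situation where $\Ic_1$ and $\Ic_2$ share an exact branch $(y+h(x))^\nu$ to infinite order, but this case is exactly rule (1) and causes no problem once one notes that the shared Newton-map prefixes must coincide termwise. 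Everything else is the routine multiplicativity of initial forms, which I would state once and invoke repeatedly rather than re-derive at each level.
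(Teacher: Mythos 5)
Your proposal is correct and matches the paper's proof in its essential structure: both rest on the multiplicativity of initial ideals along a line (hence of face polynomials, dicritical degrees, and root multiplicities), the ring-homomorphism property giving $\sg(\Ic_1\Ic_2)=\sg(\Ic_1)\sg(\Ic_2)$, and induction on depth. The extra bookkeeping you flag (shared final vertices, the trailing $(\emptyset;y^{\beta_m})$ term, the shared-branch case in rule (1)) is handled implicitly in the paper by the same inductive mechanism, so there is no substantive difference of approach.
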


 \begin{proof}
Let $(p,q)$ be a pair of positive natural numbers prime to each
other, and $l$ a line with equation $p\ag+q\bg=N$ which hits the
Newton polygon of $\Ic$. The initial ideal of $\Ic$ with respect to
$l$ is the ideal generated by the $\ti(g,l)$, where $g$ runs through
a set of generators of $\Ic$.

Certainly $\ti (\Ic,l)\neq (0)$ and $\ti(\Ic
,l)=\ti(\Ic_1,l_1)\ti(\Ic_2,l_2)$, where $l_1$ and $l_2$ are lines
parallel to $l$ which hit respectively the Newton polygon of $\Ic_1$
and the Newton polygon of $\Ic_2$. The line $l$ does not support a
face of the Newton polygon of $\Ic$ if and only if $\ti (\Ic,l)$ is
a monomial, which is equivalent to the fact that $\ti(\Ic_1,l_1)$
and $\ti(\Ic_2,l_2)$ are monomials and hence to the fact that $l_1$
and $l_2$ do not support faces of the Newton polygon of $\Ic_1$ and
$\Ic_2$, respectively.

On the other hand the line $l$ supports a face $S$ of the Newton
polygon of $\Ic$ if and only if $l_1$ supports a face $S_1$ of the
Newton polygon of $\Ic_1$ and/or $l_2$ supports a face $S_2$ of the
Newton polygon of $\Ic_2$. Denote the (non-monomial) ideal
$\ti(\Ic,S)$ as usual as
 $$
 \ti(\Ic,S)=x^{a}y^{b}F_{\Ic,S}(x^{q},y^{p})\Ic',
 $$
 where $\Ic'$ has finite codimension.
  Then $\ti(\Ic_1,l_1)$ and $\ti(\Ic_2,l_2)$ are of the form
 $$\ti(\Ic_1,l_1)=x^{a_1}y^{b_1}F_{\Ic_1,l_1}(x^{q},y^{p})\Ic_1'$$
 and
 $$\ti(\Ic_2,l_2)=x^{a_2}y^{b_2}F_{\Ic_2,l_2}(x^{q},y^{p})\Ic_2',$$
where $F_{\Ic_1,l_1},F_{\Ic_2,l_2}$ are polynomials and
$\Ic_1',\Ic_2'$ ideals of finite codimension satisfying
$F_{\Ic,S} = F_{\Ic_1,l_1}F_{\Ic_2,l_2}$, and $\Ic'=\Ic_1'\Ic_2'$.

 If $S$ is a dicritical face, then $S_1$ and/or $S_2$ is a dicritical face and $d_S=d_{S_1}+d_{S_2}$.
 A complex number $\mu$ is a root of $F_{\Ic,S}$ if and only if it is a root of $F_{\Ic_1,l_1}$ and/or $F_{\Ic_2,l_2}$, and the multiplicity of $\mu$ in $F_{\Ic,S}$ is the sum of the multiplicities of $\mu$ in $F_{\Ic_1,l_1}$ and  in $F_{\Ic_2,l_2}$.

Moreover we have
$$\sg_{(p,q,\mu)}(\Ic)=\sg_{(p,q,\mu)}(\Ic_1)\sg_{(p,q,\mu)}(\Ic_2).$$
We conclude by induction on the depth.
 \end{proof}

  \begin{cor}
Let $\Ic=(f)\Ic_1$ be an ideal in $\bc[[x,y]]$, with $\Ic_1$ an
ideal of finite codimension. Then the Newton process of $\Ic$ is the
union of the Newton process of $(f)$ and the Newton process of
$\Ic_1$.
 \end{cor}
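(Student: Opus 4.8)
The plan is to deduce this corollary directly from Proposition \ref{processofproduct} by showing that the ``merging'' rules (1) and (2) in that proposition never actually trigger when one of the two factors is an ideal of finite codimension. So the Newton process of $\Ic=(f)\Ic_1$ is simply the disjoint union of the two Newton processes, which is exactly the claim.

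First I would recall what the Newton process of an ideal $\Jc$ of finite codimension looks like: since $\supp \Jc=\{0\}$, the ideal $\Jc$ is not divisible by any power of $x$ or of $y$, so the term $(\emptyset; x^k)$ and the term $(\emptyset; y^{\beta_m})$ do not occur. Moreover every face polynomial encountered in the Newton algorithm for $\Jc$ is identically $1$ — equivalently (by the Proposition characterizing non-degeneracy, together with the behaviour of the algorithm under the further Newton maps) no element of the Newton process of $\Jc$ is of type $(\cdots;(y+h(x))^{\nu})$. Hence \emph{every} element of the Newton process of $\Jc$ is of type (2), i.e. of the form $(\sg_{(p_1,q_1,\mu_1)},\dots,\sg_{(p_n,q_n,\mu)};k)$ with $k=d_S\in\bn^*$ a dicritical degree, and in fact the last Newton map $\sg_{(p_n,q_n,\mu)}$ has $\mu$ \emph{generic}, not a root of a face polynomial. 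Symmetrically, for the principal ideal $(f)$ — writing $(f)=(x^a)(f')$ with $f'$ not divisible by $x$, so that we remember the monomial factor $(x^a)$ separately — every element of its Newton process is of type (1), of the form $(\cdots;(y+h(x))^{\nu})$, since a principal ideal has all faces non-dicritical (all $d_S=0$), so no type-(2) element with $k\geq 1$ ever appears.

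Next I would observe that the two types are ``disjoint'' in the sense relevant to Proposition \ref{processofproduct}: an element of type (1) coming from $(f)$ can only be merged, under rule (1), with a type-(1) element of the Newton process of $\Ic_1$; but $\Ic_1$ has no type-(1) elements. Likewise a type-(2) element coming from $\Ic_1$ can only be merged, under rule (2), with a type-(2) element of the Newton process of $(f)$ \emph{having the same terminal Newton map} $\sg_{(p_n,q_n,\mu)}$; but $(f)$ has no type-(2) elements at all. Therefore neither merging rule is ever applied, and Proposition \ref{processofproduct} yields precisely that the Newton process of $\Ic$ is the union of those of $(f)$ and of $\Ic_1$ (with the separately-remembered monomial factor $(x^a)$ consistently attached to $(f)$).

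The only genuinely delicate point — and the place where one must be slightly careful rather than merely bookkeeping — is the claim that a finite-codimension ideal contributes \emph{only} type-(2) elements, i.e. that one never produces an element $(\cdots; (y+h(x))^{\nu})$ along its Newton algorithm. This is where I would invoke the proposition characterizing non-degeneracy for finite-codimension ideals (face polynomials $\equiv 1$) together with the fact, following from Proposition \ref{processofproduct} applied inductively, that the property of ``having all face polynomials trivial'' propagates: after any Newton map $\sg_{(p,q,\mu)}$ with $\mu$ generic the image is a monomial times a finite-codimension ideal whose own Newton polygon again has trivial face polynomials (its dicritical vertices are precisely where arrows of the tree, all of multiplicity one for such ideals, get attached). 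Equivalently, one can argue directly from the structure of the Newton tree of a finite-codimension ideal: by Proposition \ref{ideal-gencurve} its generic curve is smooth along each branch issuing from a dicritical vertex, which forces every end of the tree to be a dicritical vertex (never a $(y+h(x))$-type leaf), and that is exactly the statement that all Newton-process elements are of type (2). Once this is pinned down, the corollary is immediate.
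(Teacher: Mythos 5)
Your overall strategy is the right one and is essentially what the corollary's placement after Proposition \ref{processofproduct} intends: show that the merging rules of that proposition never fire because a principal ideal contributes only elements of type $(\Sg;(y+h(x))^{\nu})$ (all its $d_S$ vanish) while a finite-codimension ideal contributes only elements of type $(\Sg;k)$ with $k\geq 1$, so the combined process is a plain union. The second half of this dichotomy is true, but the justification you give for it is wrong. You assert that ``every face polynomial encountered in the Newton algorithm for $\Jc$ is identically $1$'' for $\Jc$ of finite codimension, invoking the proposition characterizing non-degeneracy. That proposition is an \emph{equivalence} (finite-codimension and non-degenerate $\Leftrightarrow$ trivial face polynomials); it does not say that finite-codimension ideals are non-degenerate. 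They need not be: the ideal of Example 3 of the paper, $\Ic=\left(y^2((x^2+y^3)^2+xy^5)(x^2-y^3),\,x^8y+x^{12}\right)$, has finite codimension, depth $8$, and highly non-trivial face polynomials such as $(x+y)^2(y-x)$. So the premise ``all face polynomials trivial, hence only generic $\mu$'s occur and only type-(2) elements arise'' collapses.

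The correct reason that the Newton process of a finite-codimension ideal $\Jc$ contains no element of the form $(\Sg;(y+h(x))^{\nu})$ is different: what propagates under Newton maps is not triviality of face polynomials but the property of being a power of $x$ times a finite-codimension ideal. Indeed, if $g$ were a non-monomial common factor of the generators of $\Jc_{\sg}$, the curve germ $\{g=0\}$ (which is not $\{x_1=0\}$) would map under $\sg$ to a curve germ through the origin on which every element of $\Jc$ vanishes, contradicting $\supp\Jc=\{0\}$. Hence no $\Jc_{\Sg}$ can equal $\left(x^k(y+h(x))^{\nu}\right)$ with $\nu\geq 1$, which is exactly the statement that no type-(1) element occurs (and likewise no term $(\emptyset;y^{\beta_m})$). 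Your alternative argument via Proposition \ref{ideal-gencurve} gestures at the same fact — an arrow of positive multiplicity on the tree of $\Jc$ would force a common non-monomial factor of generic curves, hence of the generators — but as written (``smooth along each branch issuing from a dicritical vertex'') it is not a proof. With the intermediate claim repaired as above, the rest of your argument (disjointness of the two types, hence no merging in Proposition \ref{processofproduct}, hence plain union) goes through.
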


\subsection{Newton process versus Newton tree}

We explain how to recover the Newton tree from the Newton process.
If we have only one element in the Newton process of the form
$$(\sg_{(p_1,q_1,\mu_1)},\cdots,\sg_{(p_n,\tilde{m}_n,\mu_n)},\sg_{(p_{n+1},\tilde{m}_{n+1},\mu)};k),$$
then the Newton tree is as in Figure 14.

\bigskip
 \begin{figure}[ht]
 \begin{center}
\includegraphics{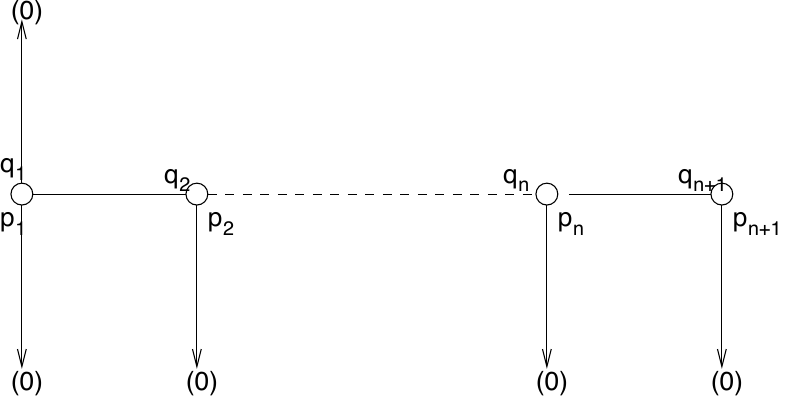}
\caption{}
 \end{center}
 \end{figure}
\bigskip
\noindent Here $q_2=p_1q_1p_2+\tilde{m}_2,\cdots,
q_{n+1}=p_nq_np_{n+1}+\tilde{m}_{n+1}$. We did not write the
decorations of the vertices. We can compute $N$ from Proposition
\ref{decorationN}, and the only vertex with $d>0$ is the last one on
the right where $d=k$.

If the unique  element in the Newton process is of the form
$$\big(\sg_{(p_1,q_1,\mu_1)},\cdots,\sg_{(p_n,\tilde{m}_n,\mu_n)},\sg_{(p_{n+1},\tilde{m}_{n+1},\mu)};(y+h(x))^{\nu}\big),$$
the Newton tree is as in Figure 15.

\bigskip
 \begin{figure}[ht]
 \begin{center}
\includegraphics{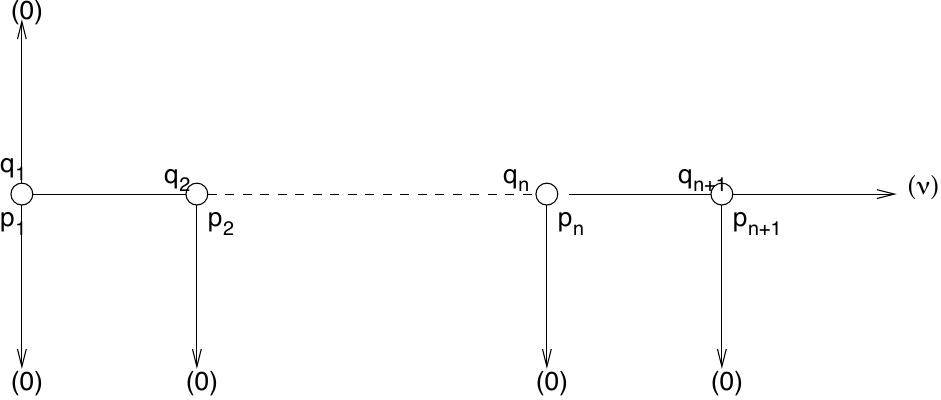}
\caption{}
 \end{center}
 \end{figure}
\bigskip
\noindent The computation of the $q_i$'s and $N$'s is the same as
before, and each vertex has $d=0$.

Now consider two elements in the Newton process:
$$(\sg_{(p_1,q_1,\mu_1)},\cdots,\sg_{(p_n,\tilde{m}_n,\mu_n)},\sg_{(p_{n+1},\tilde{m}_{n+1},\mu)};k),$$
$$(\sg_{(p'_1,q'_1,\mu'_1)},\cdots,\sg_{(p'_{n'},\tilde{m'}_{n'},\mu'_{n'})},\sg_{(p'_{n'+1},\tilde{m'}_{n'+1},\mu)};k').$$
Suppose there is a first integer $j$ such that $p_i/q_i=p'_i/q'_i$
for $i\leq j$, $\mu_i=\mu'_i$ for $i<j$ and $\mu_{j}\neq \mu'_{j}$.
Then the Newton tree of the Newton process with these two elements
is as in Figure 16.

\bigskip
 \begin{figure}[ht]
 \begin{center}
\includegraphics{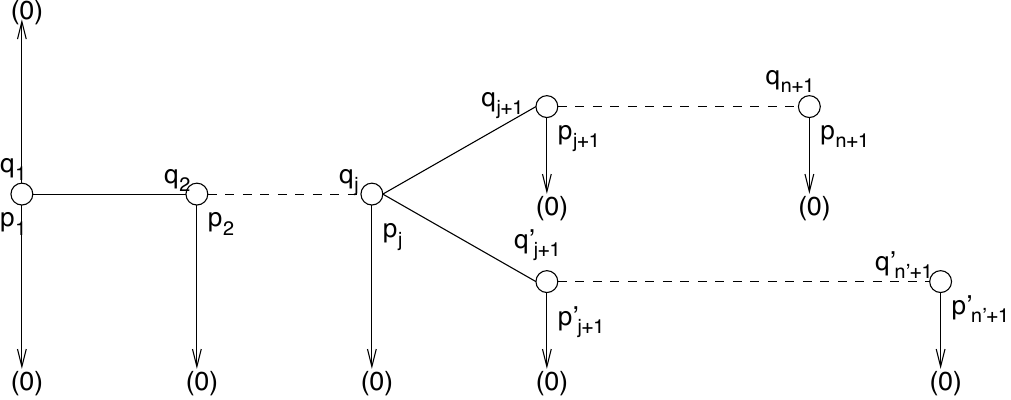}
\caption{}
 \end{center}
 \end{figure}
\bigskip

\noindent
Otherwise there is a first integer $j$ such that $p_i/q_i=p'_i/q'_i$
 and $\mu_i=\mu'_i$ for $i< j$ and $p_j/q_j>p'_j/q'_j$.
Then the Newton tree is as in Figure 17.

\bigskip
 \begin{figure}[ht]
 \begin{center}
\includegraphics{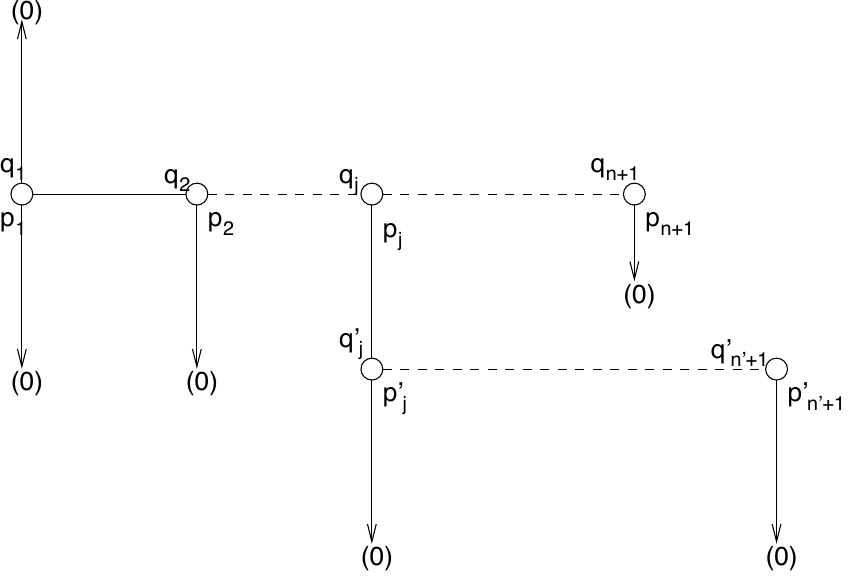}
\caption{}
 \end{center}
 \end{figure}
\bigskip

\noindent We can compute the decorations of the vertices using
Proposition \ref{decorationN}. In the case of branches the
construction is the same except that we have arrows at the final
vertices.

\begin{remark}\label{puiseux}
(1) The Newton process allows us to recover the Newton tree, but we
cannot write the Newton process from the Newton tree since there we
don't keep track of the $\mu_i$'s and the expressions $y+h(x)$.

(2) In the case of a principal ideal $(f)$ the Newton process  is
 similar to the Puiseux expansion of the branches of $f$, and the
 Newton tree to the
construction of splice diagrams from Puiseux expansions given by
Eisenbud and Neumann in \cite{EN}.
\end{remark}

 \bigskip
\noindent {\bf Example 5.} In this example we consider two ideals
with the same Newton tree, but different Newton process. Let
$$\Ic_1=(2x^4-x^2y^3+x^5, xy^5+x^2y^6,y^7+xy^6)$$ and
$$\Ic_2=(3x^4-x^2y^3,x^3y^2,y^7).$$
Their Newton tree is given in Figure 18.

  \begin{figure}[ht]
 \begin{center}
\includegraphics{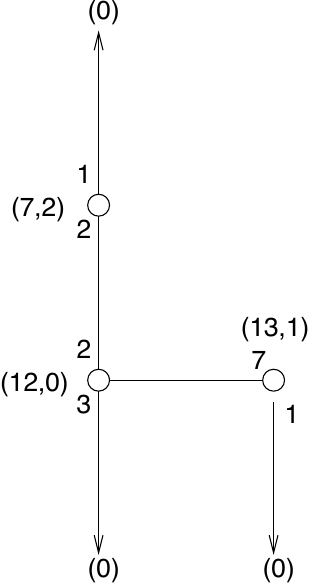}
\caption{}
 \end{center}
 \end{figure}

\noindent The Newton process of $\Ic_1$ is
 $$\{(\sg_{(3,2,2)},\sg_{(1,1,\mu)};1),(\sg_{(2,1,\mu)};2)\}$$
 and the Newton process of $\Ic_2$ is
 $$\{(\sg_{(3,2,3)},\sg_{(1,1,\mu)};1),(\sg_{(2,1,\mu)};2)\}.$$

\bigskip

\subsection{Newton process of the integral closure of an ideal}

In this paragraph we compare the Newton process of an ideal  and the
Newton process of its integral closure. Denote by $\overline{\Ic}$ the integral closure of an ideal $\Ic$.

The following result was proved in \cite{BFS} for ideals of finite codimension. The general case follows easily from that case.

\begin{lemma}\label{same Newton diagram}
  Let $\Ic$ be an ideal in $\bc[[x,y]]$.
 We have
 $$\Dg(\overline{\Ic})=\Dg(\Ic).$$
  \end{lemma}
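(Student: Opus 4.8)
The plan is to reduce the general statement to the case of ideals of finite codimension, which is the case treated in \cite{BFS}. First I would recall the standard fact that the integral closure of a monomial factor behaves well: if $\Ic=(x^k)\Jc$ with $x\nmid \Jc$, then $\overline{\Ic}=(x^k)\overline{\Jc}$, and moreover $\Dg(\Ic)=(k,0)+\Dg(\Jc)$, so one may assume $\Ic$ is not divisible by a power of $x$ (and symmetrically not by a power of $y$). Next, writing $\Ic=(f)\Ic'$ with $\Ic'$ of finite codimension (this decomposition is available since we are in $\bc[[x,y]]$ and it is used elsewhere in the paper), I would use $\overline{\Ic}=(f)\overline{\Ic'}$: the principal factor $(f)$ is integrally closed up to taking $f$ reduced is not needed here — what is needed is that integral closure commutes with multiplication by a principal ideal, $\overline{(f)\Ic'}=(f)\overline{\Ic'}$, which is a classical property of integral closure in a normal domain. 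Since $\Dg((f)\Ic')=\Dg(f)+\Dg(\Ic')$, the claim for $\Ic$ follows once we know it for the finite-codimension ideal $\Ic'$, plus the trivial fact $\Dg(\overline{(f)})=\Dg((f))$ (indeed $\overline{(f)}=(f)$ for a single element, as principal ideals in a normal domain are integrally closed, or at worst $\Dg$ is unchanged).

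The heart is then the finite-codimension case: $\Dg(\overline{\Ic})=\Dg(\Ic)$ when $\Ic$ is $(x,y)$-primary. Here I would simply invoke \cite{BFS}, where exactly this is established (the Newton polyhedron of an ideal of finite codimension equals that of its integral closure). One clean way to see the inclusion $\Dg(\Ic)\subseteq\Dg(\overline{\Ic})$ is trivial since $\Ic\subseteq\overline{\Ic}$. For the reverse inclusion $\Dg(\overline{\Ic})\subseteq\Dg(\Ic)$, the point is that membership in $\Dg(\Ic)$ can be detected by the monomial valuations $v_{p,q}$ for $(p,q)\in(\bn^*)^2$: a monomial $x^\alpha y^\beta$ lies in $\Dg(\Ic)$ iff $p\alpha+q\beta\geq \min\{v_{p,q}(g)\mid g\in\Ic\}$ for all such $(p,q)$ (together with the coordinate directions), and every monomial valuation is non-negative on $\bc[[x,y]]$ and satisfies $v_{p,q}(\overline{\Ic})=v_{p,q}(\Ic)$ because monomial valuations are Rees-type valuations compatible with integral closure — $v(h)\geq v(\Ic)$ whenever $h$ satisfies an integral equation over $\Ic$. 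This gives $\Dg(\overline{\Ic})\subseteq\Dg(\Ic)$.

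The main obstacle I anticipate is making the monomial-valuation argument fully rigorous in the power-series (rather than polynomial or finite-codimension) setting, specifically justifying that $\Dg(E)$ for an arbitrary $E\subseteq\bn\times\bn$ is the intersection of the half-planes cut out by the monomial valuations and the two coordinate axes, and that each such valuation is an integrally-closed-stable (i.e.\ satisfies $v(\overline{\Ic})=v(\Ic)$) valuation even when $\Ic$ is not $(x,y)$-primary. But the reduction in the first two paragraphs sidesteps exactly this difficulty: after factoring out the monomial part and the principal part, one is in the finite-codimension case, where $\Dg(\Ic)$ is a genuine (bounded-complement) Newton polyhedron and the result of \cite{BFS} applies directly. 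So in the write-up I would keep the reduction explicit and cite \cite{BFS} for the remaining case, exactly as the statement "The general case follows easily from that case" suggests.
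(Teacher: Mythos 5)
Your proposal is correct and matches the paper's approach: the paper gives no written proof beyond the remark that \cite{BFS} establishes the finite-codimension case and that "the general case follows easily from that case," and your reduction via $\Ic=(f)\Ic'$ with $\Ic'$ of finite codimension, the identity $\overline{(f)\Ic'}=(f)\overline{\Ic'}$ (which the paper itself later cites from \cite{SH}), and the additivity $\Dg((f)\Ic')=\Dg(f)+\Dg(\Ic')$ is exactly the intended easy reduction. The auxiliary monomial-valuation sketch is a reasonable (and essentially correct) way to reprove the \cite{BFS} input, but it is not needed once you invoke that reference.
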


\begin{prop}\label{integral closure}
 Let $\Ic$ be a non-trivial ideal in $\bc[[x,y]]$.
 The Newton process of the integral closure of $\Ic$ is the same as the Newton process of $\Ic$.
 \end{prop}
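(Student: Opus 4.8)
The plan is to prove the slightly stronger assertion that the Newton process is an invariant of the integral closure: \emph{if $\overline{\Ic}=\overline{\Jc}$ for two non-trivial ideals $\Ic,\Jc$ of $\bc[[x,y]]$, then $\Ic$ and $\Jc$ have the same Newton process}; the proposition is then the case $\Jc=\overline{\Ic}$, since $\overline{\overline{\Ic}}=\overline{\Ic}$. First I would reduce to ideals not divisible by a power of $x$: as $x$ is a nonzerodivisor, $\overline{x^{k}\Ic_{0}}=x^{k}\overline{\Ic_{0}}$, so the monomial factor $(x^{k})$ that the Newton process sets aside is determined by $\overline{\Ic}$, hence the same for $\Jc$, and it suffices to treat $\Ic_{0},\Jc_{0}$. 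I would then induct on $d(\Ic)+d(\Jc)$. If both depths are $0$, both ideals are principal, hence integrally closed (as $\bc[[x,y]]$ is regular, so normal), so $\Ic=\overline{\Ic}=\overline{\Jc}=\Jc$. If only $\Ic$ has depth $0$, then $\overline{\Jc}=\Ic$ is principal; writing $\Jc=g\Jc'$ with $(g)=\overline{\Jc}$ forces $g\overline{\Jc'}=(g)$, so $1\in\Jc'$ and $\Jc=(g)=\Ic$, contradicting $d(\Ic)+d(\Jc)\geq 1$. So in the inductive step both ideals have depth $\geq 1$ and, by Lemma~\ref{same Newton diagram}, the same nonempty Newton polygon $\nc(\Ic)=\nc(\Jc)$.

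The key technical fact I would establish is that \emph{initial forms of integrally dependent elements are integral over the initial ideal}: for a face $S$ of $\nc(\Ic)$, if $g\in\overline{\Ic}$ has $\ti(g,l_{S})\neq 0$, then the $(p_{S},q_{S})$-weighted order of $g$ equals $N_{S}$ (it is $\geq N_{S}$ because $\Dg(\overline{\Ic})=\Dg(\Ic)$), and passing to the weight-$nN_{S}$ homogeneous part of an integral dependence relation $g^{n}+a_{1}g^{n-1}+\cdots+a_{n}=0$ with $a_{i}\in\Ic^{i}$ exhibits $\ti(g,l_{S})$ as integral over $\ti(\Ic,S)$. Together with the trivial inclusion $\ti(\Ic,S)\subseteq\ti(\overline{\Ic},S)$ this gives $\overline{\ti(\Ic,S)}=\overline{\ti(\overline{\Ic},S)}$, hence $\overline{\ti(\Ic,S)}=\overline{\ti(\Jc,S)}$ whenever $\overline{\Ic}=\overline{\Jc}$.

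From this I would deduce that the data attached to each face $S$ coincide for $\Ic$ and $\Jc$. Writing $\ti(\Ic,S)$ in its canonical form $x^{a}y^{b}F_{\Ic,S}(x^{q_{S}},y^{p_{S}})\cdot(k_{1},\dots,k_{s})$ and using the identity $\overline{hJ}=h\overline{J}$ valid for a nonzerodivisor $h$ and an ideal $J$, applied with $h=x^{a}y^{b}F_{\Ic,S}(x^{q_{S}},y^{p_{S}})$, one sees that the greatest common divisor of $\overline{\ti(\Ic,S)}$ equals $h$, because the $k_{i}$ are coprime and so $\overline{(k_{1},\dots,k_{s})}$ has trivial gcd. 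Comparing with the same computation for $\Jc$ gives $F_{\Ic,S}=F_{\Jc,S}$ (both normalized monic in $y$), so the two face polynomials have the same roots $\mu_{i}$; and then $\overline{(k_{1},\dots,k_{s})}$ is the same ideal for $\Ic$ and $\Jc$, which forces the common degree $d_{S}$ of the $k_{i}$'s to agree, since the Newton diagram of an ideal generated by coprime forms of a single degree is a single segment and thus determines that degree.

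Finally I would unwind the recursive definition of the Newton process. For each root $\mu_{i}$ of the common face polynomial of $S$, put $\sg=\sg_{(p_{S},q_{S},\mu_{i})}$; since integral dependence relations are preserved by the ring homomorphism $\sg$, one has $\sg(\Ic)\subseteq\sg(\overline{\Ic})=\sg(\overline{\Jc})\subseteq\overline{\sg(\Jc)}$, hence $\overline{\sg(\Ic)}=\overline{\sg(\Jc)}$; as $d(\sg(\Ic))<d(\Ic)$ and $d(\sg(\Jc))<d(\Jc)$, the induction hypothesis applies and $\sg(\Ic)$ and $\sg(\Jc)$ have the same Newton process. Now the Newton process of an ideal of depth $\geq 1$ is built from the $y$-exponent $\beta_{m}$ (read off from $\Dg$), the faces $S$ with their weights $(p_{S},q_{S})$, the dicritical pairs $(S,d_{S})$ with $d_{S}\geq 1$, the roots $\mu_{i}$ of each face polynomial, and, with $\sg_{(p_{S},q_{S},\mu_{i})}$ prepended, the Newton processes of the transformed ideals $\sg_{(p_{S},q_{S},\mu_{i})}(\cdot)$ --- all of which now coincide for $\Ic$ and $\Jc$. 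This yields the claim. The step I expect to be the main obstacle is precisely this last one: one must check carefully that the definition of the Newton process through the final vertices of the Newton tree does unfold into exactly this recursion, i.e. that prepending $\sg_{(p_{S},q_{S},\mu_{i})}$ to the Newton process of $\sg_{(p_{S},q_{S},\mu_{i})}(\Ic)$ accounts for exactly the sets $\tilde{\Sg}_{v}$ of all final vertices $v$ whose chain of preceding vertices starts at the face $S$ through the root $\mu_{i}$.
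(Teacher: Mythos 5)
Your argument is correct, but it takes a genuinely different route from the paper's. The paper stays entirely at the level of Newton diagrams: since a Newton map $\sg$ is a ring homomorphism, $\sg(\overline{\Ic})\subset\overline{\sg(\Ic)}$, so $\Dg(\sg(\Ic))\subset\Dg(\sg(\overline{\Ic}))\subset\Dg(\overline{\sg(\Ic)})$, and Lemma \ref{same Newton diagram} forces equality for \emph{every} $\sg$; the multiplicity $\nu$ of each root $\mu$ of the face polynomial is then read off from the corner $(N,\nu)$ of the diagram of $\Ic_{\sg_{(p_S,q_S,\mu)}}$, so equality of all transformed diagrams yields $F_{\Ic,S}=F_{\overline{\Ic},S}$ and equality of the dicritical degrees, and one repeats down the tree. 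You instead work at the level of initial ideals on a single diagram: your key lemma --- extracting the weight-$nN_S$ homogeneous part of an integral dependence relation to show $\ti(g,l_S)\in\overline{\ti(\Ic,S)}$ for $g\in\overline{\Ic}$, hence $\overline{\ti(\Ic,S)}=\overline{\ti(\overline{\Ic},S)}$ --- does not appear in the paper, and you then recover $x^ay^bF_{\Ic,S}(x^{q_S},y^{p_S})$ as the gcd of that integral closure (legitimate, since the residual ideal generated by the coprime $k_i$ is of finite codimension, so its closure has trivial gcd) and $d_S$ from the Newton polygon of the residual factor. Both proofs share the same recursive skeleton: pass to $\sg_{(p_S,q_S,\mu_i)}$ for each root, use $\overline{\sg(\Ic)}=\overline{\sg(\overline{\Ic})}$, and induct on depth. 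What your route buys: Lemma \ref{same Newton diagram} is invoked only once per level rather than after every Newton map, and you obtain directly the stronger statement that two ideals with the same integral closure have the same Newton process --- which the paper derives only as a consequence of this proposition in the theorem of Section 5. What it costs: you must write out the (standard but not free) lemma on initial forms of integral elements and the gcd/degree extraction, where the paper's diagram comparison is shorter, if terser. The book-keeping step you flag at the end --- that the definition of the Newton process via final vertices unfolds into the recursion "prepend $\sg_{(p_S,q_S,\mu_i)}$ to the process of the transformed ideal" --- is indeed the point requiring care, but it is exactly how the Newton tree is constructed by induction on depth, so it goes through.
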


\begin{proof}
Let $\sg$ be a Newton map. Since it is a ring homomorphism, we have
$\sg (\overline{\Ic})\subset \overline{\sg (\Ic)}$.
Then we have
$$\Dg(\sg (\Ic))\subset \Dg(\sg (\overline{\Ic})) \subset \Dg(\overline{\sg\ (\Ic)})$$
and hence we conclude, by Lemma \ref{same Newton diagram}, that
$$\Dg(\sg (\Ic))= \Dg(\sg (\overline{\Ic})).$$


In order to show now that $\Ic$ and $\overline{\Ic}$
have the same Newton process, we may restrict to the case when $\Ic$ is of finite codimension. (Indeed, we have that  $\overline{(f)\Ic_1}=(f) \overline{\Ic_1}$ for $f\in \bc[[x,y]]$ and $\Ic_1$ an ideal in $\bc[[x,y]]$, see \cite{SH}.)

Let $S$ be a face of the Newton polygon of $\Ic$ (and $\overline{\Ic}$), with equation $p_S\ag+q_S\bg=N$, and

$$ \ti(\Ic,S)=x^{a_S}y^{b_S}F_{\Ic,S}(x^{q_S},y^{p_S})\big(k_1(x^{q_S},y^{p_S}),\cdots,k_s(x^{q_S},y^{p_S})\big)$$
or
$$
\ti(\overline{\Ic},S)=x^{a_S}y^{b_S}G_{\overline{\Ic},S}(x^{q_S},y^{p_S})\big(h_1(x^{q_S},y^{p_S}),\cdots,h_t(x^{q_S},y^{p_S})\big)$$
as in (\ref{initialideal1}) or (\ref{initialideal2}).
If $F_{\Ic,S}$ is constant, then the face is dicritical and after a
Newton map $\sigma=\sg_{(p_S,q_S,\mu)}$ with $\mu$ generic we have a
Newton diagram of the form $(N,0)+\br^2$ (Figure 19).

  \begin{figure}[ht]
 \begin{center}
\includegraphics{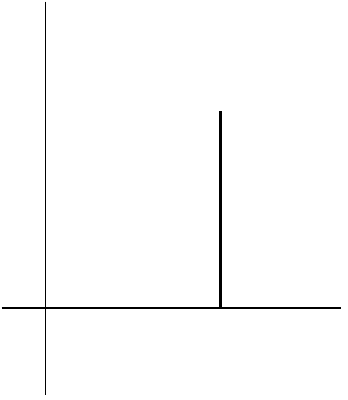}
\caption{}
 \end{center}
 \end{figure}

If $F_{\Ic,S}$ is not constant, let $\mu$ be one of its roots with
multiplicity $\nu$. After the Newton map
$\sigma=\sg_{(p_S,q_S,\mu)}$ the Newton polygon is either empty with
Newton diagram $(N,\nu)+\br ^2$ or is not empty with
 $(N,\nu)$ at the origin of the first face of the Newton polygon (Figure 20).

   \begin{figure}[ht]
 \begin{center}
\includegraphics{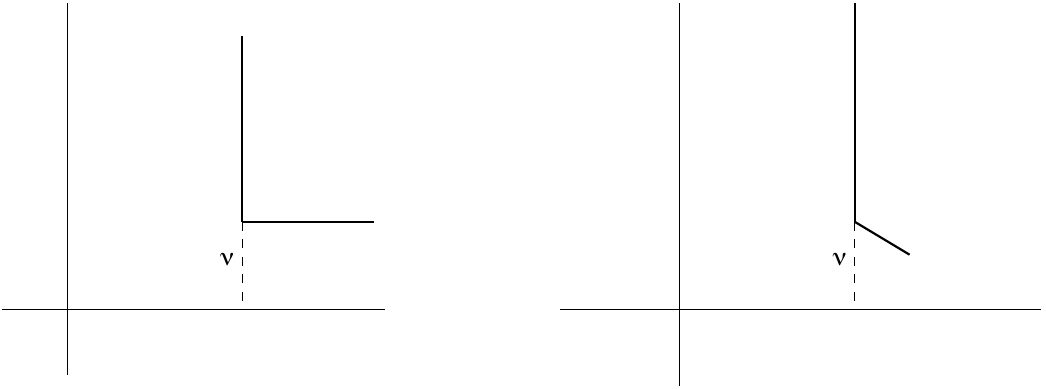}
\caption{}
 \end{center}
 \end{figure}

 Since $\sg(\Ic)$ and $\sg(\overline{\Ic})$ have the same Newton polygon for all $\sigma$,
  we must have $F_{\Ic,S}=G_{\overline{\Ic},S}$ and $\deg(k_i)=\deg(h_j)$.
Repeating the argument we conclude that $\Ic$ and $\overline{\Ic}$
have the same Newton process.
\end{proof}

\section{Computations of some invariants of the ideal using the Newton tree}

Recall that we denote by $\dc (\Ic)$ the set of dicritical vertices
of the Newton tree of the ideal $\Ic$ in $\bc[[x,y]]$.

\subsection{Vertices on Newton trees and valuations}

Let $\Ic$ be an ideal in $\bc[[x,y]]$ and $f\in \bc[[x,y]]$.  Let $v$ be a vertex on the Newton tree of $\Ic$. It corresponds to a face $S_v$ of a Newton polygon of some $\Ic_{\Sg}$, where $\Sg$ is a sequence of Newton maps. The face $S_v$ has equation $p\ag+\tilde{m}\bg=N_v$. We denote by
$\sg_v$ the Newton map $\sg_{(p,\tilde{m},\mu)}$ for a generic $\mu$.

Define $N_v(f)$ via
$$\sg_v(f_{\Sg})(x,y)=x^{N_v(f)}f_1(x,y)$$
with $f_1(x,y)\in \bc[[x,y]], f_1(0,y)\neq 0$. The map
$$\begin{matrix}
&\nu_v&:&\bc[[x,y]]&\longrightarrow&\bn\\
&&&f&\mapsto& N_v(f)\\
\end{matrix}$$
is a valuation on $\bc[[x,y]]$.

Using Proposition \ref{processofproduct}, we can identify the Newton
tree of $\Ic$ and the Newton tree of $f$ with subgraphs of the
Newton tree of $(f)\Ic$.

\begin{remark}\label{N is intersection mult}
The integer $N_v(f)$ is the intersection multiplicity of $f$ with an
irreducible element in $\bc[[x,y]]$ with Newton process
$\{(\Sg,\sg_v; y)\}$.

\noindent Indeed, as mentioned in Remark \ref{puiseux},
that Newton process corresponds to the Puiseux expansion of that
irreducible element.
\end{remark}

\begin{cor}\label{intersection multiplicity} Let $g\in\bc[[x,y]]$ and let $\Ic$ be a non-trivial ideal in $\bc[[x,y]]$ with generic curve $f$.
 The intersection multiplicity $(g,f)_0$ of $g$ with $f$ is equal to
 $$\sum_{v\in\mathcal{D}_{\Ic}}N_v(g)d_v.$$
 \end{cor}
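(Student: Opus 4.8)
The plan is to reduce the statement to the previously established formula for intersection multiplicities via the Newton process of a product, together with Remark~\ref{puiseux} and Remark~\ref{N is intersection mult}. First I would write the generic curve $f$ as a $\bc$-linear combination $f = \lbg_1 f_1 + \cdots + \lbg_r f_r$ with generic coefficients, so that by Proposition~\ref{ideal-gencurve} its Newton tree is obtained from the Newton tree of $\Ic$ by attaching, at each dicritical vertex $v$, exactly $d_v$ arrows of multiplicity one, while all edge decorations and the $N$-decorations are unchanged. In particular, the branches of $f$ are in bijection with the pairs $(v,j)$ where $v\in\mathcal{D}_{\Ic}$ and $j$ ranges over $1,\dots,d_v$, and each such branch $f_{v,j}$ is irreducible with Newton process $\{(\Sg_v,\sg_v; y)\}$, where $\Sg_v$ is the sequence of Newton maps leading to $v$ and $\sg_v$ is the generic Newton map at $v$ (the $d_v$ branches at a fixed $v$ differ only in the choice of root of the generic degree-$d_v$ homogeneous factor, hence all have the same associated valuation $\nu_v$).

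Next I would use additivity of intersection multiplicity over branches: $(g,f)_0 = \sum_{v\in\mathcal{D}_{\Ic}} \sum_{j=1}^{d_v} (g, f_{v,j})_0$. By Remark~\ref{N is intersection mult}, each summand $(g,f_{v,j})_0$ equals $N_v(g)$, since $f_{v,j}$ is an irreducible element with Newton process $\{(\Sg_v,\sg_v;y)\}$ and the intersection multiplicity of $g$ with such an element is exactly $N_v(g)$ by definition of the valuation $\nu_v$. Summing over the $d_v$ branches at $v$ gives $d_v N_v(g)$, and summing over $v\in\mathcal{D}_{\Ic}$ yields
$$
(g,f)_0 = \sum_{v\in\mathcal{D}_{\Ic}} N_v(g)\, d_v,
$$
which is the claimed formula. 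One small point to address is that for $(g,f)_0$ to be finite we need $g$ and $f$ to have no common branch; if they do share a branch the formula should be read with both sides $+\infty$, or one restricts to generic $f$ so that no fixed $g$ shares a branch with it — I would note this and otherwise assume $f$ generic.

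The main obstacle I expect is justifying carefully that the $d_v$ branches of the generic curve sitting at a dicritical vertex $v$ all realize the \emph{same} valuation $\nu_v$ and the same Newton process $\{(\Sg_v,\sg_v;y)\}$ up to the choice of generic root, and that no further Newton maps are needed beyond $\sg_v$ — i.e. that the algorithm for $f$ genuinely terminates at $v$ for these branches. This follows from the description in the proof of Proposition~\ref{ideal-gencurve}: at a dicritical vertex the generic-coefficient homogeneous polynomial $\lbg_1 k_1 + \cdots + \lbg_s k_s$ of degree $d_v$ factors into $d_v$ distinct linear factors, so after the generic Newton map $\sg_v$ each branch becomes a unit times $x^{N}(y+h(x))$ with $h$ determined, hence is smooth and contributes a single arrow of multiplicity one; the associated valuation depends only on the data $(\Sg_v,\sg_v)$ and not on which of the $d_v$ roots was chosen. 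Once this is in place, the additivity argument and Remark~\ref{N is intersection mult} close the proof with only bookkeeping.
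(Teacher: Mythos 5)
The paper gives no explicit proof of this corollary; it is stated immediately after Remark~\ref{N is intersection mult} and is meant to follow from that remark combined with Proposition~\ref{ideal-gencurve}. Your argument fills in exactly this intended chain of reasoning, and it is correct: decompose the generic curve $f$ into branches via Proposition~\ref{ideal-gencurve} (at each dicritical vertex $v$ there are $d_v$ branches, arising from the $d_v$ distinct roots of the generic degree-$d_v$ homogeneous form), use additivity of intersection multiplicity over branches, and evaluate each summand via Remark~\ref{N is intersection mult}, noting that the roots $\mu_{v,j}$ are themselves generic so the valuation realized is indeed $\nu_v$. One point worth stating more explicitly, though it is glossed over in the paper as well: your bijection between the branches of $f$ and the pairs $(v,j)$, $v\in\mathcal{D}_{\Ic}$, $1\le j\le d_v$, holds only when $\Ic$ has no non-unit common factor and is not divisible by $x$ or $y$ (equivalently, finite codimension). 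If $\Ic=(h)\Ic'$ with $h$ non-unit, the generic curve acquires the extra factor $h$ and contributes additional branches not attached to dicritical vertices, so the stated formula would undercount. Since the corollary is only invoked in the paper for $\Ic$ of finite codimension (in the proof of Proposition~\ref{degree function}), this is harmless, but it would be cleaner to record the hypothesis.
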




The next proposition shows that we can compute the valuation of any element of $\bc[[x,y]]$ using the Newton tree.

\begin{prop}\label{formula for N}
We have for all vertices $v$ on the Newton tree of $\Ic$ that
$$ N_v(f)= \sum_{g\in \mathcal{F}_f} \rho_{v,g}m(g),$$
where $\mathcal{F}_f$ denotes the set of arrows representing $f$ on the Newton tree of $(f)\Ic$, and $m(g)$ the multiplicity of the arrow $g$ in the Newton tree of $f$.

\end{prop}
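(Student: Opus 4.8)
The claimed formula for $N_v(f)$ only involves $f$ and the arrows of its own Newton tree (sitting inside the Newton tree of $(f)\Ic$), not the rest of $\Ic$; so the first step is to observe that $N_v(f)$, as defined via $\sg_v(f_\Sg)(x,y)=x^{N_v(f)}f_1(x,y)$, depends only on $f$ and on the sequence $\Sg$ together with the extra Newton map $\sg_v=\sg_{(p,\tilde m,\mu)}$ encoded by the vertex $v$. By Remark \ref{N is intersection mult}, $N_v(f)$ equals the intersection multiplicity $(f,C_v)_0$ where $C_v$ is an irreducible element with Newton process $\{(\Sg,\sg_v;y)\}$. Thus it suffices to prove: for an irreducible curve $C$ with a single-branch Newton process and $f\in\bc[[x,y]]$ arbitrary, the intersection multiplicity $(f,C)_0$ equals $\sum_{g\in\mathcal F_f}\rho_{v,g}\,m(g)$, where $v$ is the vertex of the combined tree corresponding to $C$'s branch-end arrow — equivalently, the vertex whose associated valuation is $\nu_v$.

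\emph{Second step: decompose $f$ into irreducible factors.} Write $f=\prod_\ell b_\ell^{e_\ell}$ (up to a unit and a power of $x$, which are handled by Proposition \ref{processofproduct} and the convention that a power of $x$ is remembered separately). Since $\nu_v$ is a valuation (as asserted in the excerpt), $N_v(f)=\sum_\ell e_\ell\,N_v(b_\ell)=\sum_\ell e_\ell\,(b_\ell,C_v)_0$, and the set $\mathcal F_f$ of arrows representing $f$ is the disjoint union over $\ell$ of the arrows representing $b_\ell$, each counted with multiplicity $e_\ell$ (Proposition \ref{processofproduct}, rule (1), superposes branches of the factors). So it is enough to treat the case $f=b$ irreducible, where $\mathcal F_f$ is a single arrow $g_b$ with multiplicity $1$ and the claim becomes $(b,C_v)_0=\rho_{v,g_b}$.

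\emph{Third step: the core identity $(b,C)_0=\rho_{v,g_b}$ for two irreducible curves.} This is exactly the classical formula expressing the intersection multiplicity of two branches in terms of their Puiseux expansions / splice (Eisenbud–Neumann) diagrams, specialized to Newton trees: I would build the combined Newton tree of $(b)\,C$ (equivalently of the ideal $(b)\cdot(\text{generator of }C)$) via the Newton-process-of-a-product rule (Proposition \ref{processofproduct}), identify the vertices of $b$'s and $C$'s trees as subgraphs, and compute $(b,C)_0$ by the Newton algorithm: applying the common initial Newton maps until the two branches separate, the intersection multiplicity accumulates as a product of the $p$'s and $q$'s encountered — which is precisely the product $\rho_{v,g_b}$ of the decorations adjacent to the path from the vertex $v$ (the branch-point of $C$) to the arrow $g_b$ (the branch of $b$). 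Concretely, one tracks that after a Newton map $\sg_{(p,q,\mu)}$ the intersection multiplicity of the transforms satisfies $(b,C)_0 = p\,q\cdot(\text{residual})$ when the two branches still share the face, and equals $N_v(b)\cdot(\text{degree of }y\text{ in }C)$ once they split off — matching the recursive definition of the edge-decorations on the Newton tree (Proposition 3.x on $q=p_0q_0p+\tilde m$). An induction on the depth, or on the number of common Newton maps, closes this.

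\textbf{The main obstacle} will be the bookkeeping in the third step: carefully matching the $\rho$-products read off the Newton tree with the recursive computation of intersection multiplicity along the Newton algorithm, i.e. checking that the decorations $(q,p)$ updated by the gluing rule $q\mapsto q+p_iq_ip_{i-1}^2\cdots p_1^2 p$ reproduce exactly the standard intersection-multiplicity recursion for branches (which is essentially the statement that Newton trees and splice diagrams carry the same linking-number data, cf. Remark \ref{puiseux}(2)). Once that dictionary is in place — and it is already implicit in the constructions of Sections 3 and 4 — the proposition follows by linearity of $\nu_v$ over factorizations of $f$.
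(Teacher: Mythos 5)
Your proposal is correct and follows essentially the same route as the paper: reduce to $f$ irreducible (a single arrow of multiplicity one, the general case following from additivity of $\nu_v$ and Proposition \ref{processofproduct}), then induct on the number of Newton maps shared by the branch of $f$ and the path to $v$, checking that the gluing rule $q=p_0q_0p+\tilde{m}$ for the edge decorations reproduces the multiplicative accumulation of the $p$'s and $q$'s under successive Newton maps. The paper dispenses with your detour through the curvette $C_v$ of Remark \ref{N is intersection mult} and works directly with the definition of $N_v(f)$, carrying out your ``third step'' explicitly: the base cases $i=0$ and $i=1$ by writing $f=(y^{p'_1}-\mu' x^{q'_1})^K+\cdots$ and applying one Newton map, and the inductive step via Lemma \ref{hauteur}.
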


\begin{proof}
It is enough to prove the proposition in the case where $f$ is
irreducible; then there is only one arrow with multiplicity $1$. Let
$w$ be the vertex where $f$ is attached on the Newton tree of
$(f)\Ic$. We write $\mathcal{S}(v)=\{v_1,\cdots, v_i,
v_{i+1},\cdots, v\}$ and $\mathcal{S}(w)=\{v_1,\cdots, v_i,
w_{i+1},\cdots, w\}$, with $v_{i+1}\neq w_{i+1} $. We use induction
on $i$.

Assume first that $i=0$, meaning that $w_1\neq v_1$. Let $(q_1,p_1)$ be the
decorations near $v_1$ and let $(q'_1,p'_1)$  be the  decorations near
$w_1$. Assume that $p'_1q_1-p_1q'_1>0$ (Figure
21).

   \begin{figure}[ht]
 \begin{center}
\includegraphics{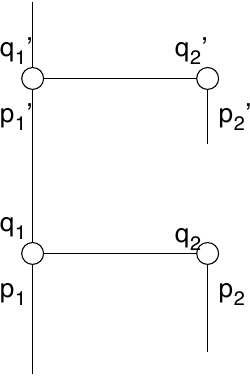}
\caption{}
 \end{center}
 \end{figure}

\noindent
We can write
$$f(x,y)=(y^{p'_1}-\mu'x^{q'_1})^K+\cdots,$$
where $\cdots$ means monomials with exponents above the face
$S_{w_1}$. By Lemma \ref{hauteur} we have $p'_1 K= \rho_{w_1w}/q'_1$ and consequently $K= \rho_{w_2w}/q'_2$. The first Newton map associated
with $v$ is $\sg_{(p_1,q_1,\mu)}$ for some $\mu\in \bc ^*$ and
$$f\big(\mu^{a_1}x^{p_1},x^{q_1}(y+\mu^{b_1})\big)=(x^{q_1p'_1}(y+\mu^{b_1})^{p'_1} - \mu'\mu^{a_1q'_1}x^{p_1q'_1})^K+\cdots=x^{p_1q_1'K}f_1(x,y),$$
where $f_1(0,y)\neq 0$.
Now we apply the composition of Newton maps which give rise to the
vertices $v_2,\cdots, v$ and we get
$$N_{v}(f)= p_1q_1'Kp_2p_3\cdots p = p_1q_1'K\rho_{v_2,v}/q_2=\rho_{v,w}.$$

Assume next that $i=1$ (Figure 22).

   \begin{figure}[ht]
 \begin{center}
\includegraphics{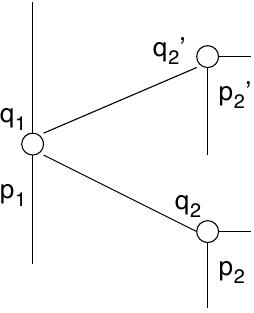}
\caption{}
 \end{center}
 \end{figure}

\noindent
We can write
$$f(x,y)=(y^{p_1} - \mu' x^{q_1})^K+\cdots,$$
where $\cdots$ means monomials with exponents above the face
$S_{v_1}$. The first Newton map associated with $v$ is
$\sg_{(p_1,q_1,\mu)}$ for some $\mu\in \bc ^*$ with $\mu \neq \mu'$
(because $i=1$). Hence
$$f\big(\mu^{a_1}x^{p_1},x^{q_1}(y+\mu^{b_1})\big)=(x^{q_1p_1}(y+\mu^{b_1})^{p_1} - \mu' \mu^{a_1q_1}x^{p_1q_1})^K+\cdots=x^{p_1q_1K}f_1(x,y),$$
where $f_1(0,y)\neq 0$,
and
$$N_{v}(f)=p_1q_1K\rho_{v_2,v}/q_2=\rho_{v,w}.$$


Now we assume that $i=i_0>1$. We write again $f(x,y)=(y^{p_1} - \mu' x^{q_1})^K+\cdots$,
where $\cdots$ means monomials with exponents above the face
$S_{v_1}$.  We consider
$$\sg_{(p_1,q_1,\mu)}(f)(x,y)=x^{p_1q_1K}f_1(x,y).$$
Note that $\mathcal{S}(w)$ on the Newton tree of
$\sg_{(p_1,q_1,\mu)}((f)\Ic)$ has strictly less than $i_0$ vertices
in common with $\mathcal{S}(v)$. We  apply the induction hypothesis
to this Newton tree. We have (using Lemma \ref{hauteur})
$$N_{v}(f_1)=\tilde{q}_{i_0}p_{i_0}p_{i_0+1}\cdots pK/(p_2\cdots p_{i_0}),$$
where $(\tilde{q}_{i_0},p_{i_0}) $ are the decorations near
$v_{i_0}$ on the Newton tree of  $\sg_{(p_1,q_1,\mu)}((f)\Ic)$.

Then we have
$$
\begin{aligned}
N_v(f)&=p_1q_1Kp_2\cdots p_{i_0}\cdots
p+\tilde{q}_{i_0}p_{i_0}p_{i_0+1}\cdots pK/(p_2\cdots p_{i_0})\\
&=p_{i_0}Kp_{i_0+1}\cdots p \, (p_1q_1p_2^2\cdots
p_{i_0-1}^2p_{i_0}+\tilde{q}_{i_0})/(p_2\cdots p_{i_0}) \\
&=p_{i_0}q_{i_0}p_{i_0+1}\cdots p K/(p_2\cdots p_{i_0}),
\end{aligned}
$$
which, again by Lemma \ref{hauteur}, proves the proposition.
\end{proof}

\begin{cor}\label{N are same}
 Let $\Ic$ be a non-trivial ideal in $\bc[[x,y]]$ and $f$ a generic curve of $\Ic$. For all vertices $v$ of the Newton tree of $\Ic$
 we have
 $$N_v(f)=N_v.$$
 \end{cor}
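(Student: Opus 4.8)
The plan is to combine the two closed formulas we already have — the one for $N_v$ coming from Proposition \ref{decorationN} and the one for $N_v(f)$ coming from Proposition \ref{formula for N} — and to read off the comparison of the two arrow sets that occur in them from Proposition \ref{ideal-gencurve}.

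Concretely, Proposition \ref{decorationN} gives
$$N_v = \sum_{g\in\mathcal{F}}\rho_{v,g}m(g) + \sum_{w\in\mathcal{D}_{\Ic}}\rho_{v,w}d_w ,$$
with $\mathcal{F}$ the set of arrows of the Newton tree of $\Ic$, while Proposition \ref{formula for N} applied to the generic curve $f$ gives $N_v(f)=\sum_{g\in\mathcal{F}_f}\rho_{v,g}m(g)$, the sum being over the arrows of the Newton tree of $f$ with their multiplicities. By Proposition \ref{ideal-gencurve} the Newton tree of $f$ is the Newton tree of $\Ic$ with, at each dicritical vertex $w\in\mathcal{D}_{\Ic}$, exactly $d_w$ additional arrows of multiplicity one attached, the edge decorations and the $N$-decorations of the vertices being unchanged. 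Hence $\mathcal{F}_f=\mathcal{F}\sqcup\{g_{w,j}\mid w\in\mathcal{D}_{\Ic},\ 1\le j\le d_w\}$ with $g_{w,j}$ a new multiplicity-one arrow at $w$; moreover the multiplicities of the arrows of $\mathcal{F}$ (and the numbers $\rho_{v,g}$ attached to them) are unaffected by the passage to the tree of $f$, and, via Proposition \ref{processofproduct}, to the tree of $(f)\Ic$ underlying Proposition \ref{formula for N}. Splitting the sum,
$$N_v(f)=\sum_{g\in\mathcal{F}}\rho_{v,g}m(g)+\sum_{w\in\mathcal{D}_{\Ic}}\sum_{j=1}^{d_w}\rho_{v,g_{w,j}} .$$

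It then remains to observe that $\rho_{v,g_{w,j}}=\rho_{v,w}$: the arrow $g_{w,j}$ hangs on $w$ through a single edge carrying no decoration near $w$ — exactly as for the branch arrows in the proof of Lemma \ref{hauteur}, where $\rho_{v,f_i}=\rho_{v,v}$ for an arrow $f_i$ attached at $v$ — so the numbers adjacent to the path from $v$ to $g_{w,j}$ are precisely those adjacent to the path from $v$ to $w$. Therefore the inner sum equals $d_w\rho_{v,w}$ and $N_v(f)=N_v$. (If $\Ic$ is divisible by a power of $x$ or of $y$, one first factors these out; they contribute identical top, resp.\ bottom, arrows to both Newton trees and play no role.) I expect the only genuinely delicate point to be the middle paragraph: checking that, through the identifications of Propositions \ref{processofproduct} and \ref{ideal-gencurve}, the arrow set of $\Ic$ together with the multiplicities and the adjacency products $\rho_{v,\cdot}$ are transported unchanged into the Newton tree of $f$; once that is granted the statement is a bookkeeping computation.
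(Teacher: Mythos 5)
Your proposal is correct and follows exactly the route of the paper, whose proof of this corollary is simply ``combine Proposition \ref{formula for N} and Proposition \ref{decorationN}''; you have merely made explicit the intermediate facts (Proposition \ref{ideal-gencurve} for the comparison of arrow sets, and $\rho_{v,g_{w,j}}=\rho_{v,w}$ for the added multiplicity-one arrows) that the paper leaves implicit.
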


\begin{proof}
Combine Proposition \ref{formula for N} and Proposition \ref{decorationN}.
\end{proof}



 \subsection{Multiplicity of an ideal}

\begin{definition}
Let $\Ic$ be a non-trivial ideal in $\bc[[x,y]]$ of finite codimension. We denote by $m (\Ic)$ the largest $k\in \bn$ such that $\Ic \subset \mc^k$.

\end{definition}

\begin{remark}
We have $m(\Ic)=m(f)$, where $m(f)$ is the multiplicity  (at the origin) of a generic curve $f$ of $\Ic$.
\end{remark}

\begin{definition}\label{rho}
Let $v$ be a vertex on a Newton tree, and $\cls (v)=\{v_i,\cdots, v_1,v\}$. We define
$$\rg (v)=\min \{ p_ip_{i-1} \cdots p_1 p, q_ip_{i-1}\cdots p_1p\} .$$
\end{definition}

\smallskip
\begin{remark}\label{originoftree}
 Note that $\rg(v)=\rg_{v_0,v}$, where $v_0$ is the vertex on the Newton tree with near decorations $(1,1)$. If the vertex $v_0$ does not appear on the Newton tree of $\Ic$ one considers $(x+y)\Ic$ and the vertex $v_0$ is a vertex of the Newton tree of this ideal.
\end{remark}

\smallskip
\begin{prop}\label{formula for m}
 Let $\Ic$ be a non-trivial ideal in $\bc[[x,y]]$ of finite codimension. We have
$$m(\Ic)=\sum_{v\in \dc(\Ic)}\rg (v) d_v .$$
\end{prop}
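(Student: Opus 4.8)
The plan is to reduce the statement to the generic-curve picture and then to a computation about intersection multiplicities on the Newton tree. First I would use the Remark that $m(\Ic)=m(f)$ for a generic curve $f$ of $\Ic$, so that it suffices to compute the multiplicity at the origin of $f$. The multiplicity $m(f)$ equals the intersection number $(f,\ell)_0$ of $f$ with a generic line $\ell$ through the origin, and such a generic line is (up to unit) of the form $y-cx$ with $c$ generic, i.e.\ an irreducible curve whose Newton process is $\{(\sg_{(1,1,c)};y)\}$ for generic $c$; equivalently, after adjoining the factor $(x+y)$ as in Remark \ref{originoftree}, it is represented by the vertex $v_0$ with near decorations $(1,1)$ (or an arrow attached there). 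By Remark \ref{N is intersection mult} and Corollary \ref{intersection multiplicity} applied with $g=\ell$ a generic line, we get
$$
(f,\ell)_0=\sum_{v\in\mathcal D_{\Ic}}N_v(\ell)\,d_v.
$$

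The next step is to identify $N_v(\ell)$ with $\rg(v)$. By Proposition \ref{formula for N} (applied to the curve $\ell$, whose Newton tree consists of a single arrow of multiplicity one attached at $v_0$), we have $N_v(\ell)=\rho_{v,v_0}$, the product of the decorations adjacent to the path from $v$ to $v_0$ on the Newton tree of $(\ell)\Ic$ (or of $(x+y)\Ic$). Then I would invoke Remark \ref{originoftree}, which states precisely that $\rg(v)=\rho_{v_0,v}$; this is the combinatorial heart and can be checked directly from Definition \ref{rho}: along the path from $v$ back to $v_0$ one picks up exactly the bottom decorations $p_i$ of all the vertices $v_i,\dots,v_1$ strictly before $v$ (each contributing $p_i$ as the adjacent decoration on the vertical/horizontal edge leading toward $v_0$) together with one extra adjacent decoration at $v$ itself, which is $p$ or $q$ according to whether one exits $v$ downward or upward toward $v_0$; the minimum of the two resulting products is $\min\{p_ip_{i-1}\cdots p_1 p,\ q_ip_{i-1}\cdots p_1 p\}=\rg(v)$. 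Since $\ell$ is generic, the path from $v_0$ to each dicritical $v$ leaves $v$ along whichever of the two edges is not occupied by the generic branch direction, and genericity of $c$ guarantees we get the minimum; combining, $N_v(\ell)=\rg(v)$.

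Putting these together yields
$$
m(\Ic)=m(f)=(f,\ell)_0=\sum_{v\in\mathcal D_{\Ic}}N_v(\ell)\,d_v=\sum_{v\in\mathcal D_{\Ic}}\rg(v)\,d_v,
$$
which is the claim. The main obstacle I expect is the careful bookkeeping in the middle step: verifying rigorously that for a \emph{generic} line the intersection multiplicity $N_v(\ell)$ with the branch through $v$ is exactly $\rho_{v_0,v}$ and that this equals $\rg(v)$ rather than some other adjacent product — in other words, that the generic line ``sees'' each dicritical vertex along the cheapest path. This requires knowing which edge the generic branch direction occupies at each vertex on the way from $v_0$, and using Lemma \ref{hauteur} together with Proposition \ref{formula for N} to track the powers of $x_1$ produced by the successive Newton maps. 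The rest is formal, following from results already established (Remark \ref{N is intersection mult}, Corollary \ref{intersection multiplicity}, Proposition \ref{formula for N}, Remark \ref{originoftree}).
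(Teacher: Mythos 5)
Your argument is correct and follows essentially the same route as the paper: reduce to a generic curve $f$, interpret $m(f)$ as the intersection number with a generic line (the branch attached at the vertex $v_0$ with decorations $(1,1)$), apply Proposition \ref{formula for N} together with Proposition \ref{ideal-gencurve}, and identify the resulting product $\rho_{v_0,v}$ with $\rg(v)$ via Remark \ref{originoftree}. The only cosmetic difference is that you evaluate the tree pairing from the line's side (via Corollary \ref{intersection multiplicity} and $N_v(\ell)$) whereas the paper evaluates it from $f$'s side as $N_{v_0}(f)=\sum_{g}\rho_{v_0,g}m(g)$; these are the same computation by symmetry of $\rho$.
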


\begin{proof}
We use Proposition \ref{formula for N} and the previous remarks, and the fact that $m(f)=N_{v_0}(f)$.
\end{proof}

\subsection{Degree function of an ideal. Dicritical vertices and Rees valuations}





\begin{definition}
Let $(R,\mathcal{M})$ be a local noetherian ring of (Krull)
dimension $n$. If $\Ic$ is an $\mathcal{M}$-primary ideal in $R$, one defines its Hilbert-Samuel multiplicity
$e(\Ic)$ as
$$e(\Ic)=\lim_{k\to \infty}\frac{n}{k^n}\lambda(R/\Ic^k ),$$
where $\lambda$ is the length.
\end{definition}

\smallskip
Following Rees \cite{DV}\cite{Re}, we define the degree function $d_{\Ic}(f)$
of an element $f\in \mathcal{M}\setminus \{0\}$ with respect to the
ideal $\Ic$ as
$$d_{\Ic}(f)=e\left(\frac{\Ic+fR}{fR}\right).$$
To every prime divisor $\nu$ of $R$ one can assign a nonnegative
integer $d(\Ic,\nu)$, satisfying $d(\Ic,\nu)=0$ for all but a finite
number of $\nu$, such that
$$d_{\Ic}(f)=\sum_{\nu\in \mathcal{P}}d(\Ic,\nu)\nu(f)$$
for all $f\in \mathcal{M}\setminus \{0\}$, where $\mathcal{P}$
denotes the set of prime divisors of $R$. Moreover, Rees and Sharp \cite{RS} proved that the numbers $d(\Ic,\nu)$ are uniquely determined by
this condition, that is, if
$$\sum_{\nu\in \mathcal{P}}d(\Ic,\nu)\nu(f)=\sum_{\nu\in \mathcal{P}}d'(\Ic,\nu)\nu(f)$$
for all nonzero $f\in \mathcal{M}$, then for all $\nu \in \mathcal{P}$ we have $d(\Ic,\nu)=d'(\Ic,\nu)$.

The valuations $\nu$ such that $d(\Ic,\nu)\neq 0$ are called the {\it Rees valuations} of $\Ic$.


\begin{prop}\label{degree function}
Let $\Ic$ be a non-trivial ideal of finite codimension in $\bc[[x,y]]$ and
$f\in (x,y), f\neq 0$. Then
$$d_{\Ic}(f)=\sum_{v\in \mathcal{D}_{\Ic}}N_v(f) d_v.$$
\end{prop}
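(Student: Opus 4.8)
The plan is to reduce the statement to Corollary \ref{intersection multiplicity} and Remark \ref{N is intersection mult}, together with the uniqueness part of the Rees--Sharp theorem. First I would recall that for an $\mathcal{M}$-primary ideal $\Ic$ in a two-dimensional regular local ring and $f\in\mathcal{M}\setminus\{0\}$, the degree function $d_{\Ic}(f)=e\big((\Ic+fR)/fR\big)$ has a geometric interpretation: if $g$ is a generic element (generic $\bc$-linear combination of generators) of $\Ic$, then $d_{\Ic}(f)$ equals the intersection multiplicity $(g,f)_0$ of the curves $g$ and $f$ at the origin. This is a classical fact (it amounts to saying that the generic element of $\Ic$ realizes the Hilbert--Samuel multiplicity of $\Ic$ along the curve $f$, i.e. $e\big((\Ic+fR)/fR\big)=\dim_{\bc}\bc[[x,y]]/(g,f)=(g,f)_0$); I would cite it from Rees's work on degree functions (\cite{DV}, \cite{Re}) rather than reprove it.

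Granting that, the proof is short. By Corollary \ref{intersection multiplicity}, taking $f$ there to be the element we call $f$ here and the ideal to be $\Ic$ with generic curve $g$, we get
$$(g,f)_0=\sum_{v\in\mathcal{D}_{\Ic}}N_v(f)\,d_v.$$
Combining with $d_{\Ic}(f)=(g,f)_0$ gives exactly the claimed formula. One should check that Corollary \ref{intersection multiplicity} is applicable regardless of whether $f$ is irreducible or has several branches, or shares components with the generic curve $g$; since $g$ is generic it shares no branch with a fixed $f$, and the corollary is stated for arbitrary $g\in\bc[[x,y]]$, so there is no issue. One should also note $N_v(f)$ is well-defined for every dicritical vertex $v$ of $\Ic$ via the valuation $\nu_v$ of the previous subsection, using the identification of the Newton tree of $\Ic$ with a subtree of the Newton tree of $(f)\Ic$.

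An alternative, more self-contained route avoids the black box $d_{\Ic}(f)=(g,f)_0$ and instead verifies the formula by using the Rees--Sharp uniqueness statement quoted just above: one shows that the assignment $v\mapsto N_v(f)$, $v\in\mathcal{D}_{\Ic}$, together with the values $d_v$, satisfies the defining relation $d_{\Ic}(f)=\sum_\nu d(\Ic,\nu)\nu(f)$ with $\nu$ ranging over the valuations $\nu_v$ attached to dicritical vertices; uniqueness then forces $d(\Ic,\nu_v)=d_v$ and $d(\Ic,\nu)=0$ otherwise. This is in fact the natural way to also obtain the companion statement that the Rees valuations of $\Ic$ are precisely the $\nu_v$ for $v\in\mathcal{D}_{\Ic}$. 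Either way, the main obstacle is the same single input: establishing that the generic curve of $\Ic$ computes the degree function, i.e. $d_{\Ic}(f)=(g,f)_0$. Everything downstream of that is a direct appeal to Corollary \ref{intersection multiplicity}. I would therefore structure the proof around citing or briefly justifying that identity, and then simply invoke the corollary.
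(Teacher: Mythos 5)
Your proposal is correct and follows essentially the same route as the paper: the paper likewise reduces to the identity $d_{\Ic}(f)=e_1((f)\vert \Ic)=(g,f)_0$ for a generic element $g$ of $\Ic$ (citing Rees and \cite{DV}\cite{Re}), and then concludes by Corollary \ref{intersection multiplicity}. Your alternative sketch via Rees--Sharp uniqueness is closer in spirit to how the paper later derives Proposition \ref{rees is dicritical}, but for this statement the main argument coincides with the paper's.
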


\begin{proof}
We have $d_{\Ic}(f)=e_1((f)\vert \Ic)$ \cite{DV}, where $e_1(\Ic \vert
\mathcal{J})$ is the mixed multiplicity of the ideals $\Ic$ and
$\mathcal{J}$. Rees \cite{Bi}\cite{Re} proved that for sufficiently generic
elements $g_{\Ic}$ and $g_{\mathcal{J}}$ of $\Ic$ and $\mathcal{J}$,
we have $e_1(\Ic \vert \mathcal{J})=e((g_{\Ic},g_{\mathcal{J}}))$.
Moreover $e((g_{\Ic},g_{\mathcal{J}}))=(g_{\Ic},g_{\mathcal{J}})_0$,
the intersection multiplicity of $g_{\Ic}$ and $g_{\mathcal{J}}$  in
$\bc[[x,y]]$. The result then follows from Corollary \ref{intersection multiplicity}.
\end{proof}

\begin{prop}\label{rees is dicritical}
The set of valuations $\nu_v, v\in \mathcal{D}_{\Ic},$ is the set of Rees valuations of $\Ic$, and
for each $v\in  \mathcal{D}_{\Ic}$ we have $d(\Ic, \nu)=d_ v$.
\end{prop}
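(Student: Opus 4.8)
The plan is to combine Proposition \ref{degree function} with the uniqueness result of Rees and Sharp quoted in the text. By Proposition \ref{degree function} we already have the formula
$$d_{\Ic}(f)=\sum_{v\in\mathcal{D}_{\Ic}}N_v(f)\,d_v=\sum_{v\in\mathcal{D}_{\Ic}}\nu_v(f)\,d_v$$
for every nonzero $f\in(x,y)$, since $\nu_v(f)=N_v(f)$ by definition of $\nu_v$. On the other hand Rees's theory gives the a priori expression $d_{\Ic}(f)=\sum_{\nu\in\mathcal{P}}d(\Ic,\nu)\,\nu(f)$, and Rees--Sharp \cite{RS} assert that the coefficients in any such representation valid for all nonzero $f\in\mathcal{M}$ are uniquely determined. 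Thus, once I check that the $\nu_v$ for $v\in\mathcal{D}_{\Ic}$ are genuine (distinct) prime divisors of $\bc[[x,y]]$, comparing the two expressions forces $d(\Ic,\nu_v)=d_v$ for each dicritical vertex $v$ and $d(\Ic,\nu)=0$ for every other prime divisor $\nu$; hence the Rees valuations of $\Ic$ are exactly the $\nu_v$, $v\in\mathcal{D}_{\Ic}$, with the stated coefficients.

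First I would recall (from the discussion preceding Remark \ref{N is intersection mult} and from Remark \ref{N is intersection mult} itself) that each $\nu_v$ is a divisorial valuation on $\bc[[x,y]]$: it is the order of vanishing along the exceptional divisor produced by the sequence of Newton maps $\Sg$ reaching $v$ followed by the generic map $\sg_v$, equivalently the intersection-multiplicity valuation associated to an irreducible element with Newton process $\{(\Sg,\sg_v;y)\}$. So each $\nu_v$ lies in $\mathcal{P}$. Second, I would verify that the map $v\mapsto\nu_v$ is injective on $\mathcal{D}_{\Ic}$: two distinct vertices of the Newton tree give rise to different sequences of Newton data, hence to non-equivalent divisorial valuations — concretely, one can separate them by evaluating on suitable monomials or on the irreducible elements attached at the respective vertices, using Proposition \ref{formula for N}. (That the $d_v$ are strictly positive for dicritical vertices is immediate from the definition of dicritical.)

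The main obstacle, and the only delicate point, is the application of the Rees--Sharp uniqueness theorem: it requires that the equality of the two linear combinations holds for \emph{all} nonzero $f\in\mathcal{M}=(x,y)$, not merely for a generic or dense family, and that both sides are expressed over the \emph{same} index set $\mathcal{P}$ of prime divisors. Proposition \ref{degree function} already gives the needed identity for all such $f$, so I would just need to phrase the comparison carefully: write Rees's representation $d_{\Ic}(f)=\sum_{\nu\in\mathcal{P}}d(\Ic,\nu)\nu(f)$ and our representation $d_{\Ic}(f)=\sum_{\nu\in\mathcal{P}}d'(\Ic,\nu)\nu(f)$, where $d'(\Ic,\nu_v)=d_v$ for $v\in\mathcal{D}_{\Ic}$ and $d'(\Ic,\nu)=0$ otherwise; since these agree for all nonzero $f\in\mathcal{M}$, Rees--Sharp gives $d(\Ic,\nu)=d'(\Ic,\nu)$ for every $\nu\in\mathcal{P}$, which is exactly the claim. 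I expect no further computation is required beyond these bookkeeping verifications.
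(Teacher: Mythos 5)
Your proposal is correct and follows exactly the paper's approach: the paper's proof is the single sentence ``Combine Proposition \ref{degree function} with the above mentioned result of Rees and Sharp.'' Your version simply spells out the bookkeeping (that the $\nu_v$ are distinct prime divisors, that $d_v>0$ for dicritical $v$, and that the identity of Proposition \ref{degree function} holds for all nonzero $f\in\mathcal{M}$ so that Rees--Sharp uniqueness applies), which the paper leaves implicit.
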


\begin{proof}
Combine Proposition \ref{degree function} with the above mentioned
result of Rees and Sharp.
\end{proof}

Another (particular case of a) result of Rees and Sharp \cite[Corollary 5.3]{RS} is that, if $\Ic$ and $\mathcal{J}$
are two ideals in $\bc[[x,y]]$ of finite codimension, then the following statements
are equivalent:
\begin{enumerate}
\item $\overline {\Ic}=\overline{\mathcal{J}}$,
\item $d(\Ic,\nu)=d(\mathcal{J},\nu)$ for all $\nu \in \mathcal{P}$.
\end{enumerate}



\smallskip

\begin{theorem}
Two non-trivial ideals in $\bc[[x,y]]$  have the same Newton process if and only
if  they have the same integral closure.
\end{theorem}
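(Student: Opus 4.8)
The plan is to reduce the general statement to the finite-codimension case, where we can invoke the uniqueness result of Rees and Sharp stated just above, together with the identification of Rees valuations with dicritical vertices (Proposition~\ref{rees is dicritical}) and the explicit interpretation of the Newton process in terms of dicritical vertices and the data $(y+h(x))^\nu$ attached to them.

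First I would establish the easy direction. If $\Ic$ and $\Jc$ have the same integral closure, then by Proposition~\ref{integral closure} we have that the Newton process of $\Ic$ equals the Newton process of $\overline{\Ic}=\overline{\Jc}$, which in turn equals the Newton process of $\Jc$. So this direction is immediate and costs nothing beyond Proposition~\ref{integral closure}.

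For the converse, suppose $\Ic$ and $\Jc$ have the same Newton process. First reduce to finite codimension: by Lemma~\ref{same Newton diagram} the Newton diagrams agree, so $\Ic$ and $\Jc$ are divisible by exactly the same power $x^k$ of $x$ and the same power $y^l$ of $y$ (these are read off from the arrows of the Newton process, namely the term $(\emptyset;y^{\beta_m})$ and the outer monomial factor), so after dividing we may assume neither $\Ic$ nor $\Jc$ is divisible by $x$ or $y$. Next, by the structure theory of the Newton algorithm, an ideal factors as $(f)\Ic'$ with $\Ic'$ of finite codimension, where $(f)$ collects precisely the branch data $(y+h(x))^\nu$ occurring in the Newton process and $\Ic'$ the dicritical data; by Proposition~\ref{processofproduct} (and its Corollary) this decomposition of the Newton process is visible in the process itself. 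Hence sameness of Newton process forces $\Ic=(f)\Ic'$ and $\Jc=(f)\Jc'$ with the \emph{same} $f$ (up to units) and with $\Ic'$, $\Jc'$ of finite codimension having the same Newton process. Since $\overline{(f)\Ic'}=(f)\overline{\Ic'}$, it suffices to prove $\overline{\Ic'}=\overline{\Jc'}$, i.e. we are reduced to the finite-codimension case.

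So assume now $\Ic$ and $\Jc$ have finite codimension and the same Newton process. Then they have the same set of dicritical vertices with the same decorations $d_v$, and (crucially) the same valuations $\nu_v$ attached to those vertices — because the valuation $\nu_v$ is determined by the sequence of Newton maps $\Sg$ leading to $v$ together with $\sg_v$, which is exactly what the Newton process records. By Proposition~\ref{rees is dicritical}, this means $\Ic$ and $\Jc$ have the same Rees valuations and the same multiplicities: $d(\Ic,\nu)=d(\Jc,\nu)$ for all prime divisors $\nu$ of $\bc[[x,y]]$ (both sides vanish off the common dicritical set, and on a dicritical vertex $v$ both equal $d_v$). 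The cited particular case of the Rees–Sharp theorem (\cite[Corollary 5.3]{RS}) then yields $\overline{\Ic}=\overline{\Jc}$, completing the proof.

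**Main obstacle.** The genuinely delicate point is the reduction step: showing that sameness of the Newton process really does force the \emph{same} principal part $(f)$ and finite-codimension parts with the same process, rather than merely some compatible pairing. This rests on Proposition~\ref{processofproduct} being reversible in the sense that the decomposition $\Ic=(f)\Ic'$ with $\Ic'$ of finite codimension is canonical and its effect on the Newton process is to split it disjointly into the branch-type elements (those with $Z=(y+h(x))^\nu$) and the dicritical-type elements (those with $Z\in\bn^*$); I would need to argue that two ideals with identical Newton processes must have identical such splittings, which follows because the branch data $(y+h(x))^\nu$ are literally part of the recorded process. Once that bookkeeping is pinned down, the rest is a direct appeal to the already-established Propositions~\ref{integral closure} and~\ref{rees is dicritical} and to Rees–Sharp.
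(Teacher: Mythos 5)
Your proof is correct and follows essentially the same route as the paper's: easy direction via Proposition~\ref{integral closure}, reduction to the finite-codimension case by factoring out a common principal part $(f)$, and then Proposition~\ref{rees is dicritical} combined with the Rees--Sharp uniqueness result to conclude. The one place where you add genuine content is in the reduction step: the paper simply asserts that $\Ic_1=(f)\Ic'_1$ and $\Ic_2=(f)\Ic'_2$ with the \emph{same} $f$ and with $\Ic'_1,\Ic'_2$ having the same Newton process, while you explain why this is forced --- namely, that the Newton process splits cleanly into branch-type entries $(\Sg;(y+h(x))^\nu)$ coming from $(f)$ and dicritical-type entries $(\Sg;k)$ coming from the finite-codimension part, via Proposition~\ref{processofproduct} and its Corollary. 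This is a legitimate detail that the paper leaves implicit, and your identification of it as ``the genuinely delicate point'' is apt.
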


\begin{proof}
Let $\Ic_1$ and $\Ic_2$ be two ideals in $\bc[[x,y]]$. Assume they
have the same integral closure. Since (by Proposition \ref{integral
closure})  the Newton process of the integral closure is equal to
the Newton process of the ideal, they have the same Newton process.

Assume they have the same Newton process. We can write
$\Ic_1=(f)\Ic'_1$ with $\Ic'_1$ of finite codimension, and
$\Ic_2=(f)\Ic'_2$ with $\Ic'_2$ of finite codimension, and then
$\Ic'_1$ and $\Ic'_2$ have the same Newton process. Proposition
\ref{rees is dicritical} and the second result of Rees and Sharp
imply that $\overline{\Ic'_1}=\overline{\Ic'_2}$ and consequently
$\overline{\Ic_1}=\overline{\Ic_2}$.
\end{proof}

\bigskip
\noindent {\bf Example 6.} Consider the two ideals
$$\Ic_1=(x^2,xy^4,y^5) \qquad\text{and}\qquad \Ic_2=(x^2,xy^3,y^5).$$
They have the same Newton process $\{(\sg_{(5,2,\mu)};1)\}$,
therefore they have the same integral closure.

\begin{remark}
Assume that  the ideal $\Ic$ in $\bc[[x,y]]$ is non degenerate. Let
$\Ic_0$ be the monomial ideal generated by the elements
$x^{\ag}y^{\bg}$, where $(\ag,\bg)$ are all the vertices of the
Newton polygon of $\Ic$. Then $\Ic$ and $\Ic_0$ have the same Newton
process, hence the same integral closure. But the integral closure
of a monomial ideal is a monomial ideal. Thus if $\Ic$ is non
degenerate, its integral closure is a monomial ideal. This result
has already been proved in \cite{S}. Reciprocally, if the integral
closure of $\Ic$ is a monomial ideal, then it is non degenerate and $\Ic$
itself is non degenerate.
\end{remark}

\subsection{Factorization of the integral closure of an ideal}

Recall the following result of Zariski.

\begin{theorem}
Every non-zero integrally closed ideal $\Ic$ in $\bc[[x,y]]$ can be
written uniquely (except for ordering of the factors) as
$$\Ic=f_1^{l_1}\cdots f_m^{l_m}\Ic_1^{k_1}\cdots \Ic_n^{k_n},$$
where $\Ic_1,\cdots,\Ic_n$ are simple integrally closed ideals of
finite codimension, $f_1,\cdots,f_m$ are irreducible elements in
$\bc[[x,y]]$ and $l_1,\cdots,l_m,k_1,\cdots,k_n$ are positive
integers.
\end{theorem}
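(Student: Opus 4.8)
The plan is to read the factorization off the Newton process, exploiting two facts established above: by Proposition~\ref{integral closure} (together with the theorem that two non-trivial ideals in $\bc[[x,y]]$ have the same Newton process if and only if they have the same integral closure) the Newton process is a complete invariant of the integral closure; and by Proposition~\ref{processofproduct} the Newton process of a product is obtained from the union of the factors' processes by the additive merging rule on the $Z$-components. I will also invoke Zariski's theorem that in a two-dimensional regular local ring a product of integrally closed ideals is again integrally closed (see \cite{SH}).

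\textbf{Existence.} Let $\Ic$ be a non-zero integrally closed ideal. Its Newton process is a finite set of pairs, each of exactly one of two types: a \emph{branch atom} $(\Sg;(y+h(x))^{\nu})$ or a \emph{dicritical atom} $(\Sg;k)$ with $\nu,k\in\bn^*$. To a branch atom associate the irreducible power series $f$ with Newton process $\{(\Sg;y+h(x))\}$; this exists because such a one-element process is exactly a Puiseux parametrisation (Remark~\ref{puiseux}). To a dicritical atom associate an ideal $\Ic_S$ with Newton process $\{(\Sg;1)\}$, obtained from an ideal $\Ic_{(p,q)}$ of the introduction by the chain of inverse Newton maps recorded in $\Sg$; by Proposition~\ref{rees is dicritical} it has exactly one Rees valuation, with $d$-value $1$, so it is a simple integrally closed ideal of finite codimension (if $\Ic_S=\mathcal{A}\mathcal{B}$ with both factors proper, Proposition~\ref{processofproduct} would force the single $Z$-value $1$ to split as a sum of two elements of $\bn^*$, which is impossible). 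Now put
\[ J := \prod_{a}f_a^{\nu_a}\cdot\prod_{b}\Ic_{S_b}^{k_b}, \]
the products being over the branch atoms and dicritical atoms of $\Ic$. Applying Proposition~\ref{processofproduct} repeatedly, and noting that the atoms occurring are pairwise distinct (a branch atom is never a dicritical atom, distinct $f_a$ give distinct data $(\Sg_a,h_a)$, and distinct simple complete $\Ic_{S_b}$ give distinct $\Sg_b$ since each is determined by its process), no merging occurs and the Newton process of $J$ equals that of $\Ic$. Hence $\overline{J}=\overline{\Ic}=\Ic$, and since $J$ is a product of integrally closed ideals in the two-dimensional regular local ring $\bc[[x,y]]$, $J$ is itself integrally closed, so $J=\Ic$. (The monomials $x^k,y^l$ are absorbed into the $f_a^{\nu_a}$ since $x,y$ are irreducible.)

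\textbf{Uniqueness.} Suppose $\Ic=f_1^{l_1}\cdots f_m^{l_m}\,\Ic_1^{k_1}\cdots\Ic_n^{k_n}$ as in the statement. An irreducible $f_i$ has Newton process consisting of a single branch atom $(\Sg_i;y+h_i(x))$, and a simple integrally closed ideal $\Ic_j$ of finite codimension has Newton process consisting of a single dicritical atom $(\mathcal{T}_j;1)$: it is a one-element process because the process determines $\overline{\Ic_j}=\Ic_j$, and its exponent must be $1$, for otherwise $\Ic_j$ would have the same integral closure as $\mathfrak{p}\cdot\mathfrak{p}^{k-1}$ (with $\mathfrak{p}$ having process $\{(\mathcal{T}_j;1)\}$), contradicting simplicity. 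Thus $f_i^{l_i}$ has process $\{(\Sg_i;(y+h_i(x))^{l_i})\}$ and $\Ic_j^{k_j}$ has process $\{(\mathcal{T}_j;k_j)\}$, and these $m+n$ atoms are pairwise distinct by the same reasoning as above. By Proposition~\ref{processofproduct} the Newton process of $\Ic$ is therefore exactly the multiset of these atoms; since it depends only on $\Ic$, the multiset of branch atoms with exponents (which recovers the pairs $(f_i,l_i)$) and of dicritical atoms with exponents (which recovers the pairs $(\Ic_j,k_j)$) is determined by $\Ic$. This is the asserted uniqueness up to ordering.

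\textbf{Main obstacle.} The substantive external input is Zariski's product theorem — that products of integrally closed ideals stay integrally closed in a two-dimensional regular local ring — without which one only obtains $\overline{J}=\Ic$ rather than $J=\Ic$ in the existence part. Everything else is bookkeeping with Proposition~\ref{processofproduct}: isolating the atoms of a Newton process, realising each by an irreducible element or a simple complete ideal, and checking distinctness so that the merging rule is inert.
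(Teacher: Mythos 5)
The paper does not actually prove this statement: it is recalled as a classical theorem of Zariski (hence the ``Recall the following result''), and the discussion that follows only explains how to \emph{read off} the factors from the Newton process. Your proposal instead tries to \emph{derive} the theorem from the Newton-process machinery, which is a genuinely different and more ambitious route. The uniqueness half is essentially sound: it rests on Proposition \ref{processofproduct}, the fact that the process is an invariant of the integral closure, and the two external inputs you name, and it is close in spirit to what the paper does establish.

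The existence half, however, has a real gap. For each dicritical atom $(\Sg;k)$ you need an \emph{integrally closed} ideal of finite codimension whose Newton process is exactly $\{(\Sg;1)\}$, and you produce it by applying ``the chain of inverse Newton maps'' to an ideal $\Ic_{(p,q)}$. Newton maps are ring homomorphisms $\bc[[x,y]]\to\bc[[x_1,y_1]]$; they are not invertible on ideals, and nothing in the paper manufactures an ideal upstairs with a prescribed process downstairs. This existence statement is precisely the classical bijection between divisorial valuations (finite chains of infinitely near points) and simple complete ideals -- i.e.\ it is the substantive content of Zariski's theory, not a consequence of the combinatorics. The paper tacitly concedes this: in Example 3 it must guess generators for $\Ic_1$ and $\Ic_2$ and verify integral closedness by hand via the Rees valuation. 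A correct completion would either construct the candidate directly (e.g.\ as the valuation ideal $\{g:\nu_v(g)\ge N_v\}$ for the dicritical valuation $\nu_v$ attached to $\Sg$, and check its process is $\{(\Sg;1)\}$) or cite Zariski/Lipman for the existence of simple complete ideals -- at which point one is importing most of the theorem being proved. Two smaller points: your justification that a simple complete ideal has a one-atom process (``because the process determines $\overline{\Ic_j}=\Ic_j$'') is a non sequitur; the correct argument is that a process with two atoms, or one atom of exponent at least $2$, would yield a nontrivial factorisation via the existence step plus the product theorem. And you implicitly use that a finite-codimension ideal has no branch atoms; this is true (every transform $\Ic_\Sg$ contains a power of $x_1$, so no factor $y_1+h(x_1)$ can be common) but should be stated.
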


Such a decomposition can be obtained using the Newton process. The
ideals $\Ic_j$ are the integrally closed ideals with Newton process
$\{(\Sg;1)\}$ such that  $(\Sg;k_j)$ belongs to the Newton process
of $\Ic$. The irreducible elements $f_i$ are the irreducible
elements with Newton process $\{\big(\Sg; (y+h_i(x))\big)\}$ such that
$\big(\Sg; (y+h_i(x))^{l_i}\big)$ belongs to the Newton process of $\Ic$.

\bigskip
\noindent {\bf Example 2 {\rm (continued)}.} The Newton process of
$\Ic$ is
$$\{(\sg_{(1,1,\mu)};3),(\sg_{(1,3,\mu)};1)\}.$$
Then
$$\overline{\Ic}=\Ic_1^3\Ic_2,$$
where $\Ic_1$ is the integrally closed ideal with  Newton process
$\{(\sg_{(1,1,\mu)};1)\}$, that is $(x,y)$, and $\Ic_2$ is the
integrally closed ideal with Newton process
$\{(\sg_{(1,3,\mu)};1)\}$, that is $(x^3,y)$. Hence
$$\overline{\Ic}=(x,y)^3(x^3,y).$$

\bigskip
\noindent {\bf Example 3 {\rm (continued)}.} The Newton process of $\Ic$ is
$$\{ (\sg_{(3,2,1)}, \sg_{(1,4,\mu)};1), $$
$$\begin{aligned} (\sg_{(3,2,-1)},\sg_{(2,1,-1/9)}, \sg_{(1,1,-1/54)},\sg_{(1,1,-55/17496)},&\sg_{(1,1,-10/19683)}, \\ &\sg_{(1,1,-7/93312)},\sg_{(1,1,-91/9565938)},\sg_{(1,1,\mu)};1),
\end{aligned}$$
$$(\sg_{(1,2,\mu)};1),(\sg_{(1,4,-1)}, \sg_{(1,2,\mu)};1)\}.$$
Then $\overline{\Ic}$ is the product of four simple integrally closed ideals:
$$\overline{\Ic}=\Ic_1\Ic_2\Ic_3\Ic_4 .$$
To find generators of these ideals, we use the following result in  \cite [Theorem 4.3]{RS}:
$x\in \overline{\Ic} $ if and only if $v(x)\geq v(\Ic)$ for all Rees valuations $v$ of $\overline{\Ic}$.

\smallskip
Let $\Sg_1=\{ (\sg_{(3,2,1)}, \sg_{(1,4,\mu)};1)\}$.
We claim that the simple integrally closed ideal with Newton process $\Sg_1$ is $$\Ic_1=(x^2-y^3,x^2y^2,x^3y).$$
It is easy to verify that this ideal has indeed $\Sg_1$ as Newton process; we must show that it is integrally closed.

We know that $X\in \overline{\Ic_1}$ if and only if
$v(X)\geq v(\Ic_1)=10$, where $v$ is the unique Rees valuation involved. Hence all monomials $x^{\ag}y^{\bg}$ with $3\ag+2\bg \geq 10$ belong to $\overline{\Ic_1}$. Moreover, one can compute that $X=\sum_{3\ag+2\bg<10}c_{\ag,\bg}x^{\ag}y^{\bg}$ belongs to $\overline{\Ic_1}$ if and only if $X\in (x^2-y^3)$. Finally one verifies immediately that $x^2-y^3$, $x^2y^2$ and $x^3y$ generate the other monomials $x^{\ag}y^{\bg}$ with $3\ag+2\bg \geq 10$, being $x^4$, $xy^4$ and $y^5$.

\smallskip
Let $$\Sg_2=\{(\sg_{(3,2,-1)},\sg_{(2,1,-1/9)}, \sg_{(1,1,-1/54)},$$
$$\sg_{(1,1,-55/17496)},\sg_{(1,1,-10/19683)},\sg_{(1,1,-7/93312)},\sg_{(1,1,-91/9565938)},\sg_{(1,1,\mu)};1)\}.$$
We claim that
$$\Ic_2=((x^2+y^3)^2+xy^5,x(x^2+y^3)^2, x^4y^2,y^8,x^5y)$$
is the simple integrally closed ideal with Newton process $\Sg_2$.
It is straightforward  to verify that $\Ic_2$ has $\Sg_2$ as Newton process; we must show that it is integrally closed.

Now $X\in \overline{\Ic_2}$ if and only if
$v(X)\geq v(\Ic_2)=32$, where $v$ is the unique Rees valuation involved.
Hence all monomials $x^{\ag}y^{\bg}$ with $3\ag+2\bg\geq 16$ are in $\overline{\Ic_2}$. Also, a more tedious calculation shows that
$X=\sum_{3\ag+2\bg<16}c_{\ag,\bg}x^{\ag}y^{\bg}$ belongs to $\overline{\Ic_2}$ if and only if $X\in ((x^2+y^3)^2+xy^5,x(x^2+y^3)^2)$.
 Since $(x^2+y^3)^2+xy^5$ and $x(x^2+y^3)^2$ belong to $\Ic_2$, also $x^2y^5 \in \Ic_2
 $. Next $x^2y^5, x^4y^2, y^8,y^2((x^2+y^3)^2+xy^5)\in \Ic_2$ implies that $xy^7\in \Ic_2$. Further $xy^7,x^5y,x^2y^6,xy((x^2+y^3)^2+xy^5)\in\Ic_2$ implies that $x^3y^4\in\Ic_2$. Finally also $x^6$ belongs to $\Ic_2$. As a conclusion all monomials with $3\ag+2\bg\geq 16$ are in $\Ic_2$.

\smallskip
Let
$\Sg_3=\{(\sg_{(1,2,\mu)};1)\}$.
Then $\Ic_3=(x^2,y)$.

Let $\Sg_4=\{(\sg_{(1,4,-1)}, \sg_{(1,2,\mu)};1)\}$.
Then $\Ic_4=(x^4+y,x^2y)$.

\bigskip
\noindent {\bf Example 5 {\rm (continued)}.}
 We have
$$\Ic_1=\mathcal{J}_1\mathcal{J}_2^2 \quad\text{and} \quad \Ic_2=\mathcal{J}'_1\mathcal{J}_2^2,$$
where $\mathcal{J}_1$ is the integrally closed ideal with Newton
process $\{(\sg_{(3,2,2)},\sg_{(1,1,\mu)};1)\}$, that is
$(2x^2-y^3,xy^2,y^4)$ \cite[ex. 1.3.3]{SH},  $\mathcal{J}'_1$ is the
integrally closed ideal with Newton process
$\{(\sg_{(3,2,3)},\sg_{(1,1,\mu)};1)\}$, that is
$(3x^2-y^3,xy^2,y^4)$, and $\mathcal{J}_2$ is the integrally closed
ideal with Newton process $\{(\sg_{(2,1,\mu)};1)\}$, that is
$(x,y^2)$.

\smallskip
\subsection{Hilbert-Samuel multiplicity of an ideal}



\smallskip
\begin{theorem}\label{HSfirst}
Let $\Ic$ be a non-trivial ideal in $\bc[[x,y]]$ of finite codimension. Then
$$e(\Ic)=\sum_{v \in \dc (\Ic)} N_v d_v .$$
\end{theorem}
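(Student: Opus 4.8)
The plan is to relate the Hilbert--Samuel multiplicity $e(\Ic)$ to the degree function and then invoke the combinatorial formula for the degree function already established. Recall the classical fact (due to Rees, and used implicitly in Section 5 above) that for an $\mc$-primary ideal $\Ic$ in a two-dimensional regular local ring and a sufficiently generic element $f\in\Ic$, one has $e(\Ic)=d_{\Ic}(f)$; more precisely, if $f$ is a generic element of $\Ic$ then $\Ic+fR/fR$ has finite colength in $R/fR$ and its Hilbert--Samuel multiplicity computes $e(\Ic)$. Concretely, if $f$ is a generic curve of $\Ic$ in the sense used throughout Section 3, then $e(\Ic) = d_{\Ic}(f) = (f,g)_0$ for $g$ another generic element of $\Ic$, i.e. $e(\Ic)$ equals the intersection multiplicity of two generic curves of $\Ic$.

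First I would fix a generic curve $f$ of $\Ic$ and apply Proposition \ref{degree function} with this $f$: this gives
$$
d_{\Ic}(f)=\sum_{v\in\mathcal{D}_{\Ic}} N_v(f)\, d_v .
$$
Next I would invoke Corollary \ref{N are same}, which says precisely that for a generic curve $f$ of $\Ic$ and every vertex $v$ of the Newton tree of $\Ic$ one has $N_v(f)=N_v$. Substituting this into the previous display yields
$$
d_{\Ic}(f)=\sum_{v\in\mathcal{D}_{\Ic}} N_v\, d_v .
$$
Finally I would identify $e(\Ic)$ with $d_{\Ic}(f)$ for this generic $f$, using the Rees-type result recalled above (that for a generic $f\in\Ic$, $e(\Ic)=e(\Ic+fR/fR)=d_{\Ic}(f)$, valid because $\Ic$ has finite codimension so $R/fR$ is one-dimensional and $\Ic$ is primary to its maximal ideal). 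Combining gives $e(\Ic)=\sum_{v\in\dc(\Ic)}N_v d_v$, as claimed.

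The main obstacle is the first identification $e(\Ic)=d_{\Ic}(f)$ for generic $f$: one must be careful that "generic" here means generic in $\Ic$ (so that $f$ is a generic curve of $\Ic$, for which Corollary \ref{N are same} applies), not merely generic in $\mc$, and one must check that such an $f$ is indeed a reduction-theoretic generic element so that Rees's mixed-multiplicity formula $e_1((f)\mid\Ic)=e(\Ic+fR/fR)$ and $e_1(\Ic\mid\Ic)=e(\Ic)$ both apply. Once the dictionary "$e(\Ic)=d_{\Ic}(f)$ for generic $f\in\Ic$" is in place, the rest is a direct chain of citations to Proposition \ref{degree function} and Corollary \ref{N are same}. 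Alternatively, and perhaps more cleanly, one can bypass $d_{\Ic}$ entirely: by Rees's theorem $e(\Ic)=(f,g)_0$ for two generic curves $f,g$ of $\Ic$, and then apply Corollary \ref{intersection multiplicity} with $g$ generic, giving $e(\Ic)=\sum_{v\in\mathcal{D}_{\Ic}} N_v(g)\,d_v$, and conclude again by Corollary \ref{N are same}.
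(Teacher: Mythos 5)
Your argument is correct, and it reaches the formula by a slightly different final step than the paper. The paper's proof is a one\--line citation of the Rees--Sharp formula $e(\Ic)=\sum_{v\in\mathcal{R}(\Ic)}v(\Ic)\,d(\Ic,v)$ from \cite[Theorem 4.3]{RS}, combined with Proposition \ref{rees is dicritical} (which identifies the Rees valuations with the $\nu_v$ for $v\in\dc(\Ic)$ and gives $d(\Ic,\nu_v)=d_v$) and, implicitly, with the fact that $\nu_v(\Ic)=N_v$, which is Corollary \ref{N are same}. You instead bypass the explicit Rees\--valuation dictionary and go through the reduction\--theoretic identity $e(\Ic)=d_{\Ic}(f)=(f,g)_0$ for generic elements $f,g$ of $\Ic$ (a superficial element of an $\mathcal{M}$\--primary ideal in a two\--dimensional regular local ring cuts the multiplicity down to the quotient, equivalently two generic elements generate a reduction), and then apply Proposition \ref{degree function} (or Corollary \ref{intersection multiplicity}) together with Corollary \ref{N are same}. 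Since Proposition \ref{rees is dicritical} is itself deduced from Proposition \ref{degree function} and the Rees--Sharp uniqueness statement, the two proofs rest on the same machinery; yours trades the citation of \cite[Theorem 4.3]{RS} for the superficial\--element fact, and you correctly identify the one point that needs care, namely that the generic curve of $\Ic$ (a generic linear combination of the generators) is indeed a reduction\--theoretic generic element so that $e(\Ic)=d_{\Ic}(f)$ applies. With that point checked, the chain of substitutions is exactly as you describe and the proof is complete.
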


\begin{proof}
This is the formula $e(\Ic)=\sum_{v \in \mathcal{R}(\Ic)}v(\Ic)d(\Ic,v)$ \cite[Theorem 4.3]{RS}, where $\mathcal{R}(\Ic)$ denotes the set of Rees valuations of $\Ic$.

\end{proof}


\bigskip
\noindent {\bf Example 3 {\rm (continued)}.} Looking at the Newton
tree of the ideal $\Ic$  we calculate $e(\Ic)= \sum_{v \in \dc
(\Ic)} N_v d_v = 10+26+52+14=102$.

\bigskip
When the non-trivial ideal $\Ic$ is not of finite codimension,  one introduces
$$j(\Ic)=\lim_{n\to \infty}\frac{1}{n}\lambda\left(\Gamma_m(\Ic^n/\Ic^{n+1})\right)$$
where $\lambda (C)$ is the length of the $\bc[[x,y]]$-module $C$ and
$\Gamma_m(D)$ is the zero-th local cohomology functor applied to
$D$. When $\Ic$ has finite codimension, $j(\Ic)$ is the usual
multiplicity $e(\Ic)$.

It is proved in \cite{KV} that, if $\Ic=(f)\Ic_1$ with $\Ic_1$ of finite
codimension, then
$$j(\Ic)=e(\Ic_1)+d_{\Ic_1}(f).$$
Then by Proposition \ref{degree function} the previous theorem is
still valid in this more general setting.

\begin{theorem}
Let $\Ic=(f)\Ic_1$ be a non-trivial ideal in $\bc[[x,y]]$, with $\Ic_1$ of finite codimension. Then
$$j(\Ic)=\sum_{v \in \dc (\Ic_1)} N_v d_v .$$
\end{theorem}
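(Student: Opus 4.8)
The plan is to reduce to the finite-codimension case via the identity $j(\Ic)=e(\Ic_1)+d_{\Ic_1}(f)$ of \cite{KV} recalled above, and then to recombine the two summands using the valuations $\nu_v$ attached to dicritical vertices. Two preliminary remarks make this work. First, a generic curve of $\Ic=(f)\Ic_1$ has the form $fg$, where $g$ is a generic curve of $\Ic_1$: a generic $\bc$-linear combination of the generators $fg^{(1)},\dots,fg^{(r)}$ of $\Ic$ is $f$ times a generic $\bc$-linear combination of the generators $g^{(1)},\dots,g^{(r)}$ of $\Ic_1$. Second, since the principal ideal $(f)$ contributes no dicritical vertices, Proposition \ref{processofproduct} applied to $\Ic=(f)\Ic_1$ shows that $\dc(\Ic)=\dc(\Ic_1)$ with the same multiplicities $d_v$, and that such a vertex $v$ carries the same divisorial valuation $\nu_v$ whether it is read off the Newton tree of $\Ic_1$ or of $\Ic$.

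Granting these, the computation is short. By Theorem \ref{HSfirst} together with Corollary \ref{N are same} applied to $\Ic_1$ one has $e(\Ic_1)=\sum_{v\in\dc(\Ic_1)}N_v(g)\,d_v$, and by Proposition \ref{degree function} one has $d_{\Ic_1}(f)=\sum_{v\in\dc(\Ic_1)}N_v(f)\,d_v$. Since $\nu_v$ is a valuation on $\bc[[x,y]]$, $N_v(g)+N_v(f)=N_v(fg)$, whence $j(\Ic)=e(\Ic_1)+d_{\Ic_1}(f)=\sum_{v\in\dc(\Ic_1)}N_v(fg)\,d_v$. Finally $fg$ is a generic curve of $\Ic$, so Corollary \ref{N are same} applied to $\Ic$ gives $N_v(fg)=N_v$ for every vertex $v$ of the Newton tree of $\Ic$; combined with $\dc(\Ic)=\dc(\Ic_1)$ this is the desired formula.

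The main point requiring care is the applicability of the quoted results in this setting: Corollary \ref{N are same} needs a non-principal ideal, which $\Ic$ is because $\Ic_1$, being $(x,y)$-primary, is proper and non-principal; and Proposition \ref{degree function} needs finite codimension, which is exactly why $\Ic_1$, not $\Ic$, appears in the degree-function term. An alternative that bypasses the valuation formalism would be to prove directly that for a dicritical vertex $v$ the decoration $N_v$ on the Newton tree of $\Ic$ equals its decoration on the Newton tree of $\Ic_1$ plus $N_v(f)$, deducing this from Proposition \ref{decorationN} and Proposition \ref{formula for N} after splitting the arrows of the Newton tree of $\Ic$ into the curvettes coming from $f$ and those coming from $\Ic_1$; the bookkeeping with the products $\rho$ of adjacent decorations is precisely what valuation-additivity above renders unnecessary, so the obstacle is really only one of organization rather than of substance.
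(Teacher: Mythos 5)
Your proposal is correct and follows essentially the same route as the paper: the paper also reduces to the identity $j(\Ic)=e(\Ic_1)+d_{\Ic_1}(f)$ from \cite{KV} and then invokes Proposition \ref{degree function} (together with Theorem \ref{HSfirst}) to rewrite both terms as sums over the dicritical vertices. Your extra care in justifying the recombination step --- that the decoration $N_v$ on the Newton tree of $\Ic$ equals $N_v(f)+N_v(g)=N_v(fg)$ for $fg$ a generic curve of $\Ic$, via valuation additivity and Corollary \ref{N are same} --- simply fills in details the paper leaves implicit.
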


\bigskip
We develop another computation of the Hilbert-Samuel multiplicity of an ideal of finite codimension using regions in the plane limited by
Newton polygons.

If an ideal $\Ic$ in $\bc[[x,y]]$ has a Newton polygon such that
$S_1$ has its origin on the line $\{x=N\}$ and $S_m$ its extremity
on the line $\{y=0\}$, we denote by $m(\Ic)$ the area of the region
limited by the lines $\{x=N\}$, $\{y=0\}$ and the Newton polygon (Figure 23).

       \begin{figure}[ht]
 \begin{center}
 \includegraphics{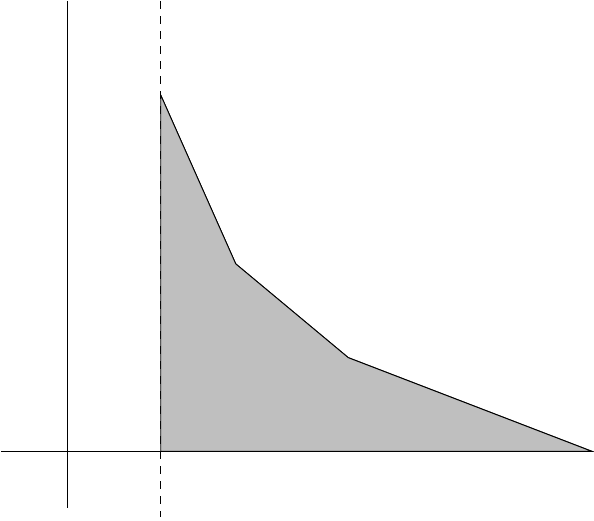}
 \caption{}
 \end{center}
\end{figure}

Let $\Ic$ be an ideal of finite codimension in $\bc[[x,y]]$. It is
proven in \cite{BFS}\cite{Ko} that
$$e(\Ic)=2m(\Ic)$$
if and only if $\Ic$ is non degenerate, and that $e(\Ic)>2m(\Ic)$
otherwise.

\bigskip
\noindent {\bf Example 2 {\rm (continued)}.} Looking at the Newton
polygon of the ideal $\Ic$  we see that $e(\Ic)=2m(\Ic)= 18$. This
is confirmed by Theorem \ref{HSfirst}, saying that $e(\Ic)=12+6=18$.

\bigskip
We can compute $e(\Ic)$ in general, using the area of the regions associated with the successive Newton polygons that appear in the Newton process.
If $\Sg_i=(\sg_1,\cdots, \sg_i)$ is a sequence of Newton maps, then $\Ic_{\Sg_i}$ has a Newton polygon as before and $m(\Ic_{\Sg_i})$ is well defined.

\begin{theorem}\label{HSmult} Let $\Ic$ be a non-trivial ideal in $\bc[[x,y]]$ of finite codimension  and of depth $d$. Then
$$e(\Ic)=2\, \left(m(\Ic)+\sum_{i=2} ^{d} \sum_{\Sg_i}m(\Ic_{\Sg_i})\right) ,$$
the second summation being taken over all possible sequences of Newton maps of length $i$.
\end{theorem}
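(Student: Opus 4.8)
The strategy is to combine Theorem \ref{HSfirst} (the formula $e(\Ic)=\sum_{v\in\dc(\Ic)}N_vd_v$) with a bookkeeping argument that redistributes each contribution $N_vd_v$ over the Newton polygons in which the relevant path of $\cls(v)$ travels. First I would set up the combinatorial framework: for a dicritical vertex $v$ with $\cls(v)=\{v_i,\dots,v_1,v\}$, each $v_j$ corresponds to a face of a Newton polygon $\Dg(\Ic_{\Sg_j})$ for a sequence $\Sg_j$ of Newton maps of length $j-1$ (with $\Sg_1=\emptyset$, so $v_i$ lies on $\Dg(\Ic)$ itself). The plan is to prove the identity by double counting: expand $2m(\Ic_{\Sg_i})$, which is twice the area cut off by the axes-like lines $\{x=N\}$, $\{y=0\}$ and the Newton polygon of $\Ic_{\Sg_i}$, as a sum over the faces of that polygon, using the elementary fact (already invoked in the proof of Proposition \ref{decorationN}) that twice the area is $\sum_S N_S|S|_1 + (\text{the piece from the bottom arrow})$ where $|S|_1$ is the width and $|S|_1=\tfrac{q_S}{p_S}|S|_2$.

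The heart of the argument is then Lemma \ref{hauteur}, which expresses $|S_v|_2$ in terms of the $\rho$'s and the decorations $m(f)$, $d_w$ of arrows and dicriticals lying "to the right" of $v$. Feeding this into the area formula, twice the area of the region attached to a single face $S_v$ becomes a sum $\sum_{f}(\text{something})\cdot m(f) + \sum_w(\text{something})\cdot d_w$ over arrows $f$ and dicriticals $w$ with $v\in \cls(f)$ resp. $v\in\cls(w)$. When we sum over \emph{all} faces on \emph{all} Newton polygons appearing in the Newton process — i.e. over all vertices $v$ of the whole Newton tree — each dicritical $w$ (with decoration $d_w$) and each arrow $f$ (with multiplicity $m(f)$) gets counted once for every $v\in\cls(w)$ resp. $v\in\cls(f)$, with the appropriate $\rho$-weights telescoping. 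The key recursion is the one from Proposition \ref{decorationN}, $N_w=\sum_{f}\rho_{w,f}m(f)+\sum_{w'\in\dc(\Ic)}\rho_{w,w'}d_{w'}$: summing the per-face area contributions along the path $\cls(w)$ reconstructs exactly $N_w$. For an ideal of finite codimension there are no arrow-multiplicity terms at final dicritical vertices beyond those already accounted for (all top-level arrows carry decoration $1$ coming from the coordinate faces and contribute to the $N$'s), so after the telescoping the only surviving terms are $\sum_{w\in\dc(\Ic)}N_w d_w$, which by Theorem \ref{HSfirst} equals $e(\Ic)$.

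Concretely, the steps in order would be: (1) record that $2m(\Ic_{\Sg})=\sum_{S}N_S|S|_1+(\text{correction from the axis pieces, which vanish since }\Ic\text{ has finite codimension})$, and rewrite $|S|_1=\tfrac{q_S}{p_S}|S|_2$; (2) substitute the expression for $|S_v|_2$ from Lemma \ref{hauteur}, obtaining per-vertex contributions indexed by arrows and dicriticals on the right of $v$; (3) sum over all vertices $v$ of the Newton tree of $\Ic$, i.e. over $\bigcup_{i=1}^d\bigcup_{\Sg_i}\{\text{faces of }\Dg(\Ic_{\Sg_i})\}$, interchanging the order of summation so that the outer index becomes the pair (dicritical $w$ or arrow $f$); (4) for fixed $w$, recognize that $\sum_{v\in\cls(w)}(\text{weight}(v,w))=N_w$ via the identity in Proposition \ref{decorationN} and the fact that $\rho_{v,w}$-factors along $\cls(w)$ multiply correctly (this is the same "simple computation on the $\rho$'s" used in Lemma \ref{hauteur}); similarly the arrow contributions collapse into the $N_w$'s of the dicriticals they feed into; (5) conclude $2(m(\Ic)+\sum_{i\ge2}\sum_{\Sg_i}m(\Ic_{\Sg_i}))=\sum_{w\in\dc(\Ic)}N_w d_w=e(\Ic)$ by Theorem \ref{HSfirst}.

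The main obstacle I expect is step (3)–(4): making the reindexing and the $\rho$-telescoping rigorous and verifying that nothing is double-counted or dropped — in particular checking the boundary behaviour at the top/bottom arrows of each intermediate polygon (the $(N,0)$-type diagrams of Figure 19 and the $(N,\nu)$-type of Figure 20), which is where the finite-codimension hypothesis is genuinely used, since it guarantees the Newton process terminates in purely dicritical/monomial data with no surviving branch contributions. Once the correspondence "faces of intermediate polygons $\leftrightarrow$ vertices of the Newton tree" is set up cleanly, the computation is a formal manipulation parallel to the proof of Proposition \ref{decorationN}, just summed rather than evaluated at a single vertex.
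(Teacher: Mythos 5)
Your high-level strategy matches the paper's --- decompose each $2m(\Ic_{\Sg_i})$ over the faces of the intermediate Newton polygon, then telescope/reindex to produce $\sum_{w\in \dc(\Ic)}N_w d_w$, and finish by Theorem~\ref{HSfirst} --- but the route you propose through Lemma~\ref{hauteur} and Proposition~\ref{decorationN} is not the paper's, and as written it has concrete gaps.

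First, the area formula in your step (1) is off by a factor $q_S$: twice the area cut off by a Newton polygon whose origin lies on the $y$-axis is $\sum_S N_S(\delta_S-1)$ (paper's Lemma~\ref{lemma1}), where $\delta_S-1 = |S|_1/q_S = |S|_2/p_S$, not $\sum_S N_S|S|_1$. Second, and more seriously, the claim that the ``correction from the axis pieces'' vanishes by finite codimension is false for $i\geq 2$: after a Newton map the transformed ideal is divisible by $x^{N}$ for some $N>0$, so the polygon of $\Ic_{\Sg_i}$ has its origin on $\{x=N\}$ and the relevant identity (paper's Lemma~\ref{lemma2}) carries the correction $(N_S - N p_S)(\delta_S-1)$. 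Far from being negligible, this $-N p_S$ term is precisely what produces the cancellation: when one reindexes $\sum_v N_{\mathrm{prec}(v)}\,p_v\,(\delta_v-1)$ by the preceding vertex and applies $\delta_w - 1 = d_w + \deg F_{\Ic,S_w} = d_w + \sum_{v'\to w} p_{v'}(\delta_{v'}-1)$ (which uses finite codimension), everything telescopes and only $\sum_w N_w d_w$ survives. Third, the telescoping you describe in step (4) cannot be read off Proposition~\ref{decorationN} as stated: that proposition gives $N_w$ as a sum over \emph{all} arrows and dicriticals of the tree with $\rho$-weights, while you need the different identity that the area weights summed along $\cls(w)$ reconstruct $N_w$. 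That identity is true, but establishing it via the $\rho$-calculus is essentially re-proving, in a more complicated coordinate system, the simple recursion $\delta_v-1 = d_v + \sum_{v'\to v}p_{v'}(\delta_{v'}-1)$ that the paper telescopes directly. In short: the idea is right, but you need Lemma~\ref{lemma2}'s $(N_S-Np_S)$ correction and the local recursion for $\delta_v-1$ rather than the vanishing claim, the $|S|_1$-area formula, and Proposition~\ref{decorationN}.
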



\noindent
In order to prove the theorem,
we consider first two lemma's.


\begin{lemma}\label{lemma1}

Let $\Delta$ be a Newton diagram such that $\br_+^2 \setminus
\Delta$ is bounded. Let $m(\Delta)$ be the area of $\br_+^2
\setminus \Delta$. For each face $S$ of $\mathcal{N}(\Delta)$ with
equation $p_S\ag+q_S\bg=N_S$, denote by $\delta_S$ the number of
points with integral coordinates on $S$. Then
$$2m(\Delta)=\sum_{S\subset \mathcal{N}(\Delta)}N_S(\delta_S-1).$$
\end{lemma}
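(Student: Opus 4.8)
The plan is to realise $m(\Delta)$ as a sum of elementary triangle areas, one per face of $\nc(\Delta)$, and to identify each such area with the corresponding summand on the right-hand side.

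Write the vertices of $\Delta$ as $v_0=(\alpha_0,\beta_0),\dots,v_m=(\alpha_m,\beta_m)$ as in Section~2. First I would note that the boundedness of $\br_+^2\setminus\Delta$ forces $\alpha_0=0$ and $\beta_m=0$: otherwise the complement contains a whole strip along one of the coordinate axes. (If $\nc(\Delta)$ is empty the asserted identity is the trivial $0=0$, so assume $m\geq1$.) Since $\Delta$ is convex and satisfies $\Delta+\br_+^2=\Delta$, the segment joining $O=(0,0)$ to any point of $\br_+^2\setminus\Delta$ again misses $\Delta$; hence $\br_+^2\setminus\Delta$ has closure the polygon $P$ with successive vertices $O,v_0,v_1,\dots,v_m$, and $P$ is star-shaped with respect to $O$. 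Consequently $P$ is triangulated by the $m$ triangles $T_i:=Ov_{i-1}v_i$, which pairwise meet only along the radii $Ov_j$, and $m(\Delta)=\sum_{i=1}^m\operatorname{area}(T_i)$.

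The heart of the matter is a per-face computation. Fix a face $S_i=[v_{i-1},v_i]$ with supporting line $p_i\alpha+q_i\beta=N_i$ and $\gcd(p_i,q_i)=1$. The primitive lattice vector along this line is $(q_i,-p_i)$, so $v_i-v_{i-1}=k_i(q_i,-p_i)$ for a unique $k_i\in\bn^*$; as $\gcd(q_i,p_i)=1$, the lattice points on $S_i$ are exactly $v_{i-1}+j(q_i,-p_i)$ for $0\leq j\leq k_i$, so $\delta_{S_i}=k_i+1$. Substituting $\alpha_i=\alpha_{i-1}+k_iq_i$ and $\beta_i=\beta_{i-1}-k_ip_i$ into the elementary formula for the area of a triangle with a vertex at the origin, and using that $v_{i-1}$ lies on the supporting line, gives
$$2\operatorname{area}(T_i)=\alpha_i\beta_{i-1}-\alpha_{i-1}\beta_i=k_i\bigl(p_i\alpha_{i-1}+q_i\beta_{i-1}\bigr)=k_iN_i=N_{S_i}(\delta_{S_i}-1).$$
Summing over $i=1,\dots,m$ yields $2m(\Delta)=\sum_{S\subset\nc(\Delta)}N_S(\delta_S-1)$, which is the claim.

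I expect the only real obstacle to be the geometric bookkeeping of the first step: checking that the closure of $\br_+^2\setminus\Delta$ is precisely $Ov_0v_1\cdots v_m$ and that the fan triangulation from $O$ is honest (no overlaps, total cover). Both facts follow at once from convexity of $\Delta$ together with $\Delta+\br_+^2=\Delta$, so this is routine. If one prefers to avoid the triangulation entirely, one may instead apply the shoelace formula directly to the closed polygon $P$ traversed (clockwise) as $O,v_0,\dots,v_m,O$: the two edges lying on the axes contribute $0$, leaving $2m(\Delta)=\sum_{i=1}^m(\alpha_i\beta_{i-1}-\alpha_{i-1}\beta_i)$, and the same per-face identity then finishes the proof.
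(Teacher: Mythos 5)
Your proof is correct and follows essentially the same route as the paper: the paper also decomposes $\br_+^2\setminus\Delta$ into the triangles with vertices the origin and the endpoints of each face, and derives the per-face identity $2m(S)=x_2y_1-x_1y_2=N_S(\delta_S-1)$ exactly as you do. The only difference is that you justify carefully the fan decomposition from the origin (which the paper dismisses with ``obviously''), a worthwhile but routine addition.
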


\begin{proof}
Let $S$ be  a face of   $\mathcal{N}(\Delta)$. Denote by
$A_1=(x_1,y_1)$ and $A_2=(x_2,y_2)$ the origin and end of $S$,
respectively. Denote by $m(S)$ the area of the region limited by the
triangle $(A_1,0,A_2)$. Then
$$2m(S)=x_2y_1-x_1y_2=(x_2-x_1)y_1+x_1(y_1-y_2) .$$
Since
$x_2-x_1=q_S(\delta_S-1)$ and $y_1-y_2=p_S(\delta_S-1)$, we conclude that
$$2m(S)=(q_Sy_1+p_Sx_1)(\delta_S-1)=N_S(\delta_S-1) .$$
Obviously $m(\Delta)$ is the sum of these areas $m(S)$.
\end{proof}

\bigskip

If in Lemma \ref{lemma1} the depth of $\Ic$ is one, then all faces $S$ are dicritical and $\delta_S-1=d_S$.

\begin{lemma}\label{lemma2}
Let $\Sg$ be a sequence of Newton maps and $\Ic _{\Sg}$ the
corresponding ideal. Assume that its Newton polygon has its origin
on the line $x=N$. For each face $S$ of $\mathcal{N}(\Ic_{\Sg})$
with equation $p_S\ag+q_S\bg=N_S$, denote by $\delta_S$ the number
of points with integral coordinates on $S$. Then
$$2 m(\Ic_{\Sg})= \sum_{S\subset\mathcal{N}(\Ic_{\Sg})} (N_S-Np_S)(\delta_S-1).$$
\end{lemma}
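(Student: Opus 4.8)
The plan is to deduce Lemma~\ref{lemma2} from Lemma~\ref{lemma1} by a horizontal translation. First I would write $\Delta=\Delta(\Ic_\Sg)$ and let $v_0=(\ag_0,\bg_0),v_1,\dots,v_m$ be its vertices; by hypothesis $v_0$ lies on $\{x=N\}$, i.e.\ $\ag_0=N$, and (implicit in the definition of $m(\Ic_\Sg)$) the face $S_m$ has its extremity $v_m$ on $\{y=0\}$. Since $\ag_0=N<\ag_1<\dots<\ag_m$ and $\bg_0>\bg_1>\dots>\bg_m=0$, the translated set $\Delta':=\Delta-(N,0)$ is again a Newton diagram: indeed $\Delta'+\br_+^2=\Delta'$ and its vertex set is $\{(0,\bg_0),(\ag_1-N,\bg_1),\dots,(\ag_m-N,0)\}\subset\bn\times\bn$. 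Moreover $\br_+^2\setminus\Delta'$ is bounded, because $\Delta'$ meets the $y$-axis at $(0,\bg_0)$ and the $x$-axis at $(\ag_m-N,0)$.

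Next I would track the faces. The faces of $\nc(\Delta')$ are precisely the translates $S'=S-(N,0)$ of the faces $S$ of $\nc(\Ic_\Sg)$. If $S$ has supporting line $p_S\ag+q_S\bg=N_S$ with $\gcd(p_S,q_S)=1$, then $(\ag,\bg)\in S'$ iff $(\ag+N,\bg)\in S$, i.e.\ iff $p_S\ag+q_S\bg=N_S-Np_S$; hence $p_{S'}=p_S$, $q_{S'}=q_S$ and $N_{S'}=N_S-Np_S$ (this is automatically $>0$ since $S'$ is a genuine face of a Newton polygon meeting both axes). Since translation by the integral vector $(-N,0)$ is a bijection between the integral points of $S$ and those of $S'$, we get $\delta_{S'}=\delta_S$. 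Finally the region whose area is $m(\Ic_\Sg)$ — bounded by $\{x=N\}$, $\{y=0\}$ and $\nc(\Ic_\Sg)$ — is carried by $(\ag,\bg)\mapsto(\ag-N,\bg)$ onto the region bounded by $\{x=0\}$, $\{y=0\}$ and $\nc(\Delta')$, that is, onto $\br_+^2\setminus\Delta'$; as translations preserve area, $m(\Ic_\Sg)=m(\Delta')$.

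Putting these together, applying Lemma~\ref{lemma1} to $\Delta'$ yields
$$2m(\Ic_\Sg)=2m(\Delta')=\sum_{S'\subset\nc(\Delta')}N_{S'}(\delta_{S'}-1)=\sum_{S\subset\nc(\Ic_\Sg)}(N_S-Np_S)(\delta_S-1),$$
which is the assertion. I do not expect a genuine obstacle here: the whole content is the reduction to Lemma~\ref{lemma1}, and the only care needed is in checking that $\Delta'$ is a bona fide Newton diagram with bounded complement (immediate from the monotonicity of the coordinates of the $v_i$ together with the hypotheses on $S_1$ and $S_m$) and in correctly computing the effect of the translation on the supporting lines and on the lattice-point counts $\delta_S$.
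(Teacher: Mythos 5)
Your proof is correct. It differs from the paper's in packaging: the paper does not invoke Lemma~\ref{lemma1} in the proof of Lemma~\ref{lemma2}, but instead repeats the computation of Lemma~\ref{lemma1} verbatim with the triangulation apex moved from the origin to $(N,0)$ — for each face $S$ with origin $A_1=(x_1,y_1)$ and end $A_2=(x_2,y_2)$ it takes the triangle $(A_1,(N,0),A_2)$ and evaluates $2m(S)=(x_2-N)y_1-(x_1-N)y_2=(q_Sy_1+p_Sx_1-Np_S)(\delta_S-1)=(N_S-Np_S)(\delta_S-1)$, then sums over faces. Your translation by $(-N,0)$ achieves exactly the same thing as a formal reduction to Lemma~\ref{lemma1}, since the translation carries the supporting line $p_S\ag+q_S\bg=N_S$ to $p_S\ag+q_S\bg=N_S-Np_S$, preserves $p_S,q_S$, the lattice-point counts $\delta_S$, and the area; all the points you flag as needing care (that $\Delta-(N,0)$ is a genuine Newton diagram with bounded complement, and the effect on $N_S$ and $\delta_S$) do check out. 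The only thing your route buys is avoiding the duplicated shoelace computation; the only thing the paper's route buys is being self-contained at the level of each triangle. Either way the content is the same.
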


\begin{proof}
Let $S$ be a face of the Newton polygon of $\Ic_{\Sg}$ with origin
$A_1=(x_1,y_1)$ and end $A_2=(x_2,y_2)$. Denote by $m(S)$ the area
of the region limited by the triangle $(A_1,(N,0),A_2)$.  Then
$$2m(S)=(x_2-N)y_1-(x_1-N)y_2=(x_2-x_1)y_1+x_1(y_1-y_2)-N(y_1-y_2) .$$
Since $x_2-x_1=q_S(\delta_S-1)$ and $y_1-y_2=p_S(\delta_S-1)$, we see that
$$2m(S_v)=(q_Sy_1+p_Sx_1-Np_S)(\delta_S-1)=(N_S-Np_S)(\delta_S-1) ,$$
and  $m(\Ic_{\Sg})$ is the sum of the areas $m(S)$.
\end{proof}

Now we can prove Theorem \ref{HSmult}.
\begin{proof}
We have by Lemma \ref{lemma1} that
$$2m(\Ic)=\sum_v N_v(\delta_v-1) ,$$
where $v$ runs over the vertices of the graph associated with
$\nc(\Ic)$. Here each vertex $v$ corresponds to a face $S_v$ whose
supporting line has equation $p_v\ag+q_v\bg=N_v$, and we set
$\delta_v=\delta_{S_v}$.

Let $\sg_v=\sg_{(p_v,q_v,\mu_i)}$ be a Newton map associated with
$S_v$, where $\mu_i$ is a root of the face polynomial $F_{\Ic,S_v}$.
We have by Lemma \ref{lemma2} that
$$2m(\Ic_{\sg_v})= \sum_{v_i} (N_{v_i}-N_vp_{v_i})(\delta_{v_i}-1) ,$$
where the sum is taken over the vertices $v_i$ of the graph
associated with the Newton diagram of $\Ic_{\sg}$. We have
$$\delta_v-1=d_v+\deg F_{\Ic,S_v}$$
and
$$\deg F_{\Ic,S_v}=\sum_{v'}p_{v'}(\delta_{v'}-1),$$
where now the sum is over all vertices $v'$ of the graphs associated
with the Newton diagrams of the $\Ic_{\sg}$ for {\it all} roots of
$F_{\Ic,S_v}$. For the last equality we use that $\Ic$ is of finite codimension. Then
$$\sum_{v'}p_{v'}(\delta_{v'}-1)=\delta_v-1-d_v$$ and hence
$$2m(\Ic)+\sum_v \sum_\sigma 2m(\Ic_{\sg_v})= \sum_v N_vd_v+ \sum_v  \sum_{v'} N_{v'}(\delta_{v'}-1) .$$
Continuing the same procedure we arrive at the formula
$$2\left(m(\Ic)+\sum_{i=2} ^{d} \sum_{\Sg_i}m(\Ic_{\Sg_i})\right)=\sum_w N_w d_w ,$$
where, on the right hand side, $w$ runs over all vertices of the Newton tree of $\Ic$.
Since $d_w \neq 0$ if and only if $w$ is dicritical, we obtain the stated formula for $e(\Ic)$.
\end{proof}

\bigskip
\noindent {\bf Example 3 {\rm (continued)}.} Looking at the Newton
polygons arising in the Newton process for the ideal $\Ic$ we
calculate $e(\Ic)= 88+4+2+1+1+1+1+1+1+2=102$.

\bigskip

\subsection{{\L}ojasiewicz exponent}

\begin{definition}
Let $\Ic=(f_1,\cdots, f_r)$ be an ideal in $\bc[[x,y]]$ of finite codimension.
The {\L}ojasiewicz exponent of $\Ic$, denoted by $\cl _0(\Ic)$, is the infimum of $\ag>0$ such that there exists
an open neighbourhood $U$ of $0$ in $\bc^2$ and a constant $c>0$ such that
$$\|x\|^{\ag}\leq c \sup_{1\leq i \leq r} \vert f_i(x)\vert $$
for all $x\in U$.
\end{definition}

By \cite{LT}, we have that
$$\cl_0(\Ic)=\min\{r/s \mid (x,y)^r \subseteq \overline{\Ic^s}\} ,$$
 and
$$\cl _0(\Ic)= \sup _{\Gg \in \mathcal{T}}\frac{\inf _{h\in \Ic} \text{ord}(h\circ \Gg)}{\text{ord}(\Gg)} ,$$
where $\mathcal{T}$ denotes the set of analytic maps $(\bc,0)\to (\bc^2,0)$.
The number on the right hand side of the previous equality is called the {\it order of contact of $\Ic$}
in \cite{MN}. They proved that, if
$$f_a=\prod _b f_{a,b}^{n_{a,b}}$$
is the decomposition of $f_a, 1\leq a\leq r,$ into irreducible components in $\bc[[x,y]]$,
then
\begin{equation}\label{lojasiewicz}
\cl_0(\Ic)=\max_{a,b}\frac {\min_i\text{ord}(f_i\circ \Gg_{a,b})}{\text{ord}(\Gg_{a,b})} ,
\end{equation}
where $\Gg_{a,b}$ is a parametrization of $\{f_{a,b}=0\}$.

\begin{prop}
Let $\Ic$ be an ideal in $\bc[[x,y]]$ of finite codimension. Then
$$\cl_0(\Ic)=\max_{v\in \dc(\Ic)} \frac{N_v}{\rho(v)} ,$$
where $\rho(v)$ is as in Definition \ref{rho}.
\end{prop}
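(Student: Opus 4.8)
The plan is to reduce the statement to a comparison of valuations by exploiting the characterization
$$\cl_0(\Ic)=\min\{r/s \mid (x,y)^r \subseteq \overline{\Ic^s}\}$$
recalled above from \cite{LT}, together with the fact (proved earlier) that the Rees valuations of $\Ic$ are precisely the valuations $\nu_v$ attached to the dicritical vertices. First I would translate the containment: by the Rees--Sharp membership criterion \cite[Theorem 4.3]{RS} (already used above), $(x,y)^r\subseteq\overline{\Ic^s}$ holds if and only if $\nu((x,y)^r)\ge\nu(\Ic^s)$ for every Rees valuation $\nu$ of $\Ic^s$. Now by Proposition \ref{processofproduct} the Newton process of $\Ic^s$ is obtained from that of $\Ic$ by multiplying every $Z$-entry by $s$ (or raising it to the $s$-th power), so $\Ic^s$ has the same dicritical vertices as $\Ic$; hence by Proposition \ref{rees is dicritical} the Rees valuations of $\Ic^s$ are exactly the $\nu_v$ with $v\in\dc(\Ic)$. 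Since each $\nu_v$ is a valuation one has $\nu_v((x,y)^r)=r\,\nu_v((x,y))$ and $\nu_v(\Ic^s)=s\,\nu_v(\Ic)$, so the containment is equivalent to $r/s\ge \nu_v(\Ic)/\nu_v((x,y))$ for all $v\in\dc(\Ic)$, that is, to $r/s\ge\max_{v\in\dc(\Ic)}\nu_v(\Ic)/\nu_v((x,y))$.

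The second step is to identify the two valuations at each dicritical vertex $v$. For the numerator: writing $\Ic=(f_1,\dots,f_r)$ and letting $f=\sum\lambda_if_i$ be a generic curve of $\Ic$, one has $\nu_v(\Ic)=\min_i\nu_v(f_i)=\nu_v(f)$, because the value of a generic linear combination of finitely many elements equals the minimum of their values (and this can be arranged for all $v\in\dc(\Ic)$ at once, compatibly with the genericity needed below); then $\nu_v(f)=N_v(f)=N_v$ by Corollary \ref{N are same}. For the denominator: applying Proposition \ref{formula for N} to the linear form $x+y$, whose unique branch has multiplicity one and is attached at the vertex $v_0$ with near decorations $(1,1)$, gives $N_v(x+y)=\rho_{v,v_0}=\rho(v)$ by Remark \ref{originoftree}; since $x+y$ is a general element of $(x,y)$, this yields $\nu_v((x,y))=\nu_v(x+y)=\rho(v)$ (if $v_0$ is not present one passes to $(x+y)\Ic$, as in Remark \ref{originoftree}). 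Substituting, the containment $(x,y)^r\subseteq\overline{\Ic^s}$ is equivalent to $r/s\ge\max_{v\in\dc(\Ic)}N_v/\rho(v)$; as the right-hand side is a fixed rational number realized by a suitable pair $(r,s)$, the minimum of all admissible $r/s$ equals it, which is the asserted formula.

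The routine part is the first step, which only repackages Proposition \ref{rees is dicritical} and the cited Rees--Sharp criterion. The delicate part is the second step: one must make sure that a single generic choice of coefficients simultaneously realizes $\nu_v(\Ic)=\min_i\nu_v(f_i)$ at every dicritical vertex and is generic in the sense of Corollary \ref{N are same}, and one must verify the normalization $\nu_v((x,y))=\rho(v)$ carefully --- essentially the content of Remark \ref{originoftree} combined with Proposition \ref{formula for N}. I expect no other obstacles; the finite-codimension hypothesis is used only to ensure that $\cl_0(\Ic)$ and the Rees-valuation machinery are available.
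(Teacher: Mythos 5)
Your route is genuinely different from the paper's. The paper first replaces $\Ic$ by a two-generated reduction $(g_\lambda,g_\mu)$ with the same integral closure and then invokes the McNeal--N\'emethi ``order of contact'' formula (\ref{lojasiewicz}), which expresses $\cl_0$ as a maximum over the branches of the generators of $\min_i\operatorname{ord}(f_i\circ\Gg)/\operatorname{ord}(\Gg)$; the tree identification is then done through Remark \ref{N is intersection mult}, Corollary \ref{N are same} and Proposition \ref{formula for m}. You instead start from $\cl_0(\Ic)=\min\{r/s\mid (x,y)^r\subseteq\overline{\Ic^s}\}$ and convert the containment into valuation inequalities via the Rees--Sharp membership criterion together with Proposition \ref{rees is dicritical}. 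This is a clean and self-contained alternative: it bypasses the reduction to two generic elements and the branch-parametrization formula entirely, at the cost of having to compute $\nu_v(\Ic)$ and $\nu_v((x,y))$ on the nose. Your first step and your computation $\nu_v(\Ic)=N_v$ (generic combination plus Corollary \ref{N are same}) are fine.

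There is, however, a concrete error in your normalization of the denominator: $\nu_v(x+y)$ is \emph{not} equal to $\rho(v)$ in general, because the branch of $x+y$ need not be attached at the vertex $v_0$ with decorations $(1,1)$. Take $\Ic=((x+y)^2,x^5)$: the first face has initial ideal $((x+y)^2)$, the Newton map $\sg_{(1,1,-1)}$ sends $x+y$ to $x_1y_1$ and $\Ic$ to $x_1^2(y_1^2,x_1^3)$, whose unique face gives the dicritical vertex $w$ with near decorations $(5,2)$ and $\rho(w)=2$. Here $\nu_w(x)=\nu_w(y)=2$, so $\nu_w((x,y))=2=\rho(w)$ as required, but $\nu_w(x+y)=5$: the arrow of $x+y$ sits below $w$, not at $v_0$, and Proposition \ref{formula for N} gives $\rho_{w,g}=5\neq\rho_{w,v_0}$. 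The fix is easy and does not change your architecture: either take a \emph{generic} linear form $ax+by$ (whose arrow really is attached at $v_0$ by a horizontal edge carrying no new decorations, so $\rho_{v,g}=\rho_{v,v_0}=\rho(v)$), or compute directly from the definition of $\nu_v$ that $\nu_v(x)=p_i\cdots p_1p$ and $\nu_v(y)=q_ip_{i-1}\cdots p_1p$, whence $\nu_v((x,y))=\min(\nu_v(x),\nu_v(y))=\rho(v)$ by Definition \ref{rho}. With that repair the argument is complete.
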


\begin{proof}
We note that $\cl_0(\Ic)=\cl_0(\overline{\Ic})$. Let $g_{\lbg},g_{\mu}$ be two sufficiently generic
elements of $\Ic$ such that $e(\Ic)=e((g_{\lbg},g_{\mu}))$. Then $\overline{\Ic}=\overline{(g_{\lbg},g_{\mu})}$ and
$$\cl_0(\Ic)=\cl_0(\overline{\Ic})=\cl_0(\overline{(g_{\lbg},g_{\mu})} )=\cl_0((g_{\lbg},g_{\mu}) ) .$$
Now we can apply (\ref{lojasiewicz}) and previous results on multiplicity and intersection multiplicity, more precisely Remark \ref{N is intersection mult}, Corollary \ref{N are same} and Proposition \ref{formula for m}.
\end{proof}

\bigskip
\noindent {\bf Example 7.} We consider an example in \cite{MN}. Let
$$f_s=sx+x^3+y^8, \quad g=x^2-y^{101},$$
 and $\Ic_s=(f_s,g)$.
If $s\neq 0$, the Newton tree of $\Ic_s$ is the tree on the left in
Figure 24, and for $s=0$ it is the one on the right.
 Then we can compute that $\cl_0(\Ic_s)=16$ if $s\neq 0$, and that $\cl_0(\Ic_0)=8$.
       \begin{figure}[ht]
 \begin{center}
 \includegraphics{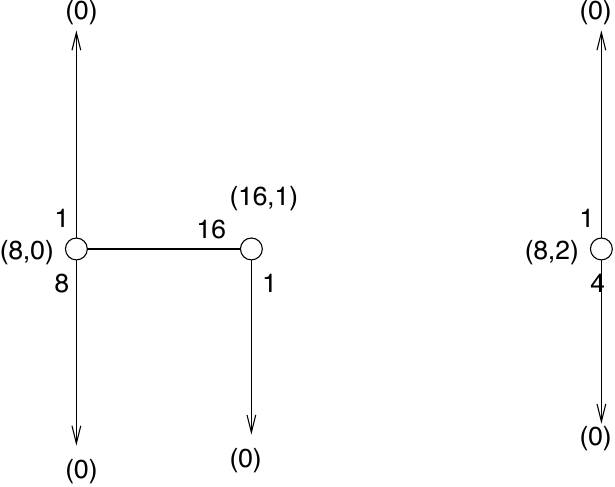}
 \caption{}
 \end{center}
\end{figure}

\begin{remark}
We note that in the previous example we have
$e(\Ic_s)=e(\Ic_0)=16$. It is proven in \cite{P} that in an $e$-constant
family of ideals, the {\L}ojasiewicz exponent is lower semi-continuous.
It is not the case in general.
\end{remark}

\bigskip
\section{Geometric version}

\bigskip
Our Newton algorithm in section 2 was developed from an algebraic point of view; we did not include coordinate changes.
This is in particular useful in studying when two ideals have the same integral closure.

From geometric point of view however, our Newton tree/process is sometimes not minimal.
Consider for example the principal ideal $\Ic$ in $\bc[[x,y]]$ generated by $(y+x^2+x^3)^2+x^7$.
Its Newton algorithm consists of three Newton maps $\sigma_1$, $\sigma_2$ and $\sigma_3$, given by
$$
\begin{cases} x=x_1    \\  y=x_1^2(y_1-1)
\end{cases}\qquad\begin{cases} x_1=x_2   \\  y_1=x_2(y_2-1)    \end{cases}
 \text{and}\qquad \begin{cases} x_2=x_3^2   \\  y_2=x_3(y_3-1) ,
\end{cases}
$$
respectively, resulting in the ideal $(x_3^{14} y_3)$.

If on the other hand we would perform first the change of coordinates $(x,y)\to(x,y-x^2-x^3)$ in $\bc[[x,y]]$, the given ideal would be
 generated by $y^2+x^7$. And then the Newton map $\sigma$ given by $x=x_1^2, y=x_1^7(y_1-1)$ immediately leads to the ideal $(x_1^{14} y_1)$.
We could say that the \lq algebraic depth\rq\ of $\Ic$ is $3$ and its \lq geometric depth\rq\ is $1$.

\medskip
In a sequel to this paper, we will develop a geometric version, using Newton maps and appropriate coordinate changes, yielding \lq minimal\rq\ Newton trees with geometric interpretation. In particular their vertices will correspond to exceptional components of the so-called relative log canonical model of the blow-up of $\operatorname{Spec} \bc[[x,y]]$ in $\Ic$.
In Example 3 for instance, this \lq minimal\rq\ Newton tree has only six vertices: the four dicritical ones and those with decorations $N=22$ and $N=46$.

\enddocument

\section{Computation of the motivic zeta function of an ideal}

In this section we express the motivic zeta function of an ideal in terms of the Newton tree. The method is the method used in [ACLM] for quasi-ordinary singularities. We can retrieve the results of [VV] for the topological zeta function of an ideal.

Let $\Gc$ be the Grothendieck ring of the algebraic varieties over $\bc$. It is generated by the symbols $[S]$ where $S$ is an algebraic variety over $\bc$, with the relations
$$[S]=[S']$$
when $S$ and $S'$ are isomorphic,
$$[S]=[S\setminus S']+[S']$$
when $S'$ is closed in $S$, and
$$[S\times S']=[S][S'].$$

Let $\bl=[\ba_{\bc}^1]$ and $\Gc_{\text{loc}}=\Gc[\bl ^{-1}]$. We denote by $\Gc[T]_{\text{loc}}$ the subring of $\Gc_{\text{loc}}[[T]]$ generated by $\Gc_{\text{loc}}[T]$ and the series
$$(1-\bl^{-a}T^b)^{-1}$$
with $a\in \bz,b\in \bn \setminus \{0\}$.

If $X$ is an algebraic variety over $\bc$, we denote by $\cl(X)$ the set of arcs on $X$.  If $R$ is a $\bc$-algebra
$$\cl_n(X)\sim\text{Mor}(\text{Spec} R[t]/t^{n+1}R[t], X)$$
$$\cl(X)=\lim_{\longleftarrow} \cl _n(X)$$
Denote by
$$\pi _n: \cl (X) \longrightarrow \cl_n(X)$$
If $A$ is a semialgebraic set of $\cl _0(\bc ^2)$, we say that it is {\it stable at order} $n$, if $\forall m\geq n$,
$$[\pi_m(A)]=[\pi _n(A)]\bl ^{2(m-n)}$$
Then we define $$\mu(A)=[\pi_n(A)]\bl ^{-2n}.$$
If $A$ is not stable, $\lim_{n\to \infty}[\pi_n(A)]\bl^{-2n}$ exists and we define
$$\mu(A)=\lim_{n\to \infty}[\pi_n(A)]\bl^{-2n}$$

Let $\Ic$ be an ideal in $\bc[[x,y]]$. We write $\Ic=(x^N)\Ic'$. Let $\og$ be a differential,
$$\og=x^{\nu-1}dx \wedge dy$$
We assume that if $N=0$, then $\nu=1$.

Let $\phi\in \cl _0(\bc^2)$. Define
$$\text{ord}_{\phi } \Ic=\min \{\text{ord}(f \circ \phi ), f \in \Ic\}$$

Let $$V_{n,m}=\{\phi\in \cl_0(\bc^2)\vert \text{ord}_\phi \Ic=n, \text{ord}(\og \circ \phi )=m\}$$

Define
$$\zg_{DL}(\Ic,\og)(T)=\sum_{n\geq 1}(\sum_{m\geq 1}\mu(V_{n,m})\bl^{-m})T^n$$
 The hypothesis ensures that the sum over $m$ is finite.

 First we consider the case where the ideal $\Ic$ is principal, generated by a monomial.
 $$\Ic=x^{N_1}y^{N_2}$$
 Let $$
 \phi(t)=\left \{
\begin{matrix}
 &x&=& c_1t^{k_1}+\cdots+a_nt^n\\
 &y&=& c_2t^{k_2}+\cdots+b_nt^n
 \end{matrix}
  \right . $$
  where $c_1,c_2\in \bc^*$.
  We have
  $$\text{ord}_\phi \Ic=k_1N_1+k_2N_2=n$$
  $$\text{ord}(\og \circ \phi )=(\nu-1)k_1=m$$
 $$ \mu(V_{n,m})=[\pi_n(V_{n,m})]\bl^{-2n}$$
 Then
 $$\zg_{DL}(\Ic,\og)(T)=(\bl-1)^2\sum_{k_1\geq1,k_2\geq 2}\bl^{n-k_1}\bl^{n-k_2}\bl^{-(\nu-1)k_1}\bl^{-2n}T^n$$
 $$\zg_{DL}(\Ic,\og)(T)=(\bl-1)^2\sum_{k_1\geq1,k_2\geq 2}\bl^{-\nu k_1-k_2}T^{k_1N_1+k_2N_2}$$
 Finally,
 $$\zg_{DL}(\Ic,\og)(T)=(\bl-1)^2\frac{\bl^{-\nu}T^{N_1}}{1-\bl^{-\nu}T^{N_1}}\frac{\bl^{-1}T^{N_2}}{1-\bl^{-1}T^{N_2}}$$

 To compute the Zeta function of an ideal,  we  use induction on the depth of the ideal. If the ideal has depth $0$, we already computed the Zeta function. Consider an ideal $\Ic$ of depth $n$. We give a
 formula computing the Zeta function if $\Ic$ in terms of the Zeta function of the ideals $\Ic_{\sg}$.

 Let $\Ic =(x^N)\Ic'$. Consider its Newton polygon and the dual of its Newton polygon. Let $S$ be a face of the Newton polygon of $\Ic$ with equation $p\ag+q\bg=N$. This face corresponds in the dual to a line going through the origin with direction $(p,q)$. We have a decomposition of $\br_+^2$ into disjoint cones. Let $\Dg$ be one of them generated by $(p_1,q_1),(p_2,q_2)$. Let
 $$\pc_{\Dg}=\{(i,j)\in \bn^2\vert i=\mu_1p_1+\mu_2p_2,j=\mu_1q_1+\mu_2q_2, (\mu_1,\mu_2)\in \bq^2, 0<\mu_1\leq1,0<\mu_2\leq1\}$$

 The set of points with integer coordinates in the cone $\Dg$ is
 $$\rc_{\Dg}=\{(i,j)\in \bn^2\vert i=i_0+k_1p_1+k_2p_2,j=j_0+k_1q_1+b_2q_2,(i_0,j_0)\in \pc_{\Dg},(k_1,k_2)\in \bn^2\}$$

 Let $(k_1,k_2)\in \bn^2$
 \begin{enumerate}
 \item Assume that $(k_1,k_2)$ lies in a cone $\Dg$ generated by $(p_1,q_1),(p_2,q_2)$. Let $(a,b)$ the intersection point of the two faces of the Newton polygon whose supporting lines have equations
$ p_1 \ag+q_1\bg =N_1$ and  $ p_2 \ag+q_2\bg =N_2$.  Let $\phi \in \cl _0(\bc^2)$ $$
 \phi (t)=\left \{
\begin{matrix}
 &x&=& c_1t^{k_1}+\cdots+a_nt^n\\
 &y&=& c_2t^{k_2}+\cdots+b_nt^n
 \end{matrix}
  \right . $$
  where $c_1,c_2\in \bc^*$.
   $$\text{ord}_\phi \Ic=k_1a+k_2b=n$$
  $$\text{ord}(\og \circ \phi )=(\nu-1)k_1$$
  The contribution to $\zg_{DL}(\Ic,\og)(T)$ is
  $$(\bl-1)^2\sum_{(k_1,k_2)\in \Dg\cap \bn^2}\bl^{n-k_1}\bl^{n-k_2}\bl^{-2n}\bl^{-(\nu-1)k_1}T^n$$
   $$=(\bl-1)^2\sum_{(k_1,k_2)\in \Dg\cap \bn^2}\bl^{-\nu k_1}\bl^{-k_2}T^{k_1a+k_2b}$$
   Let
   $$D_{\Dg}=\sum_{(i,j)\in \pc_{\Dg}}\bl^{-(\nu i+j)}T^{ia+jb}$$
   Then the contribution to $\zg_{DL}(\Ic,\og)(T)$ is
$$(\bl-1)^2 D_{\Dg}  \sum_{(n_1,n_2)\in \bn^2} \bl ^{-[(p_1\nu+q_1)n_1+(p_2\nu+q_2)n_2]}T^{(p_1a+q_1b)n_1+(p_2a+q_2b)n_2}$$
$$=(\bl-1)^2 D_{\Dg}  \frac{1}{(1-\bl^{-(p_1\nu+q_1)}T^{N_1})(1-\bl^{-(p_2\nu+q_2)}T^{N_2})}$$
 \item Assume that $(k_1,k_2)\in L$ where $L$ is a line separating two cones, generated by $(p,q)$. Then there exists $k\in \bn^*$ such that $k_1=pk, k_2=qk$. Let $S$ be the corresponding face of the Newton polygon and $F_{l_S}$ be the face polynomial.  Let $\phi \in \cl _0(\bc^2)$ $$
 \phi (t)=\left \{
\begin{matrix}
 &x&=& c_1t^{pk}+\cdots+a_nt^n\\
 &y&=& c_2t^{qk}+\cdots+b_nt^n
 \end{matrix}
  \right . $$
  where $c_1,c_2\in \bc^*$.
We have two cases
\begin{enumerate}
\item For any root $\mu$ of $F_{l_S}$, we have $c_1^q-\mu c_2^p\neq 0$. In this case,
  $$\text{ord}_\phi \Ic=Nk$$
  $$\text{ord}(\og \circ \phi )=p(\nu-1)k$$
  The contribution to $\zg_{DL}(\Ic,\og)(T)$ is
   $$(\bl-1)(\bl-r-1)\sum_{k\in \bn^*}\bl^{-(p\nu+q)k}T^{Nk}$$
   where $r$ is the number of distinct roots of $F_{l_S}$.
   The contribution to $\zg_{DL}(\Ic,\og)(T)$ is
  $$(\bl-1)(\bl-r-1)\frac{\bl ^{-(p\nu+q)}T^N}{1-\bl^{- (p\nu +q)}T^N}$$
\item There exists $\mu$ root of $F_{l_S}$ such that $c_1^q-\mu c_2^p= 0$.

We need a lemma to achieve the computation. Denote by $\overline{X}_{n,(\nu-1)pk}$ the set of arcs such that
$$\overline{ \phi} (t)=\left \{
\begin{matrix}
 &x&=& c_1t^{pk}+\cdots+a_nt^n+a_{n+1}t^{n+1}+\cdots+a_{n+pk}t^{n+pk}\\
 &y&=& c_2t^{qk}+\cdots+b_nt^n+b_{n+1}t^{n+1}+\cdots+b_{n+qk}t^{n+qk}
 \end{matrix}
 \right.$$
 and $\text{ord}_{\overline{\phi}}\Ic =n, \text{ord}( \og \circ \overline{\phi})=(\nu-1)pk$

 Denote by $\overline{X}^{\sg}_{n,(p\nu+q-1)k}$ the set of arcs such that
$$\overline{ \phi}_1 (t)=\left \{
\begin{matrix}
 &x_1&=& c_1t^{k}+\cdots+a_nt^n+a_{n+1}t^{n+1}+\cdots+a_{n+k}t^{n+k}\\
 &y_1&=& c_2t+\cdots+b_nt^n
 \end{matrix}
 \right.$$
 and $\text{ord}_{\overline{\phi}_1}\Ic_{\sg} =n, \text{ord}( \og_1 \circ \overline{\phi}_1)=(p\nu+q-1)k$,
 where $\og_1=x_1^{p\nu+q-1}dx_1\wedge dy_1$.
 Let $\overline{ \phi}\in \overline{X}_{n,(\nu-1)pk}$, and write
$$\overline{ \phi} (t)=\left \{
\begin{matrix}
 &x&=& c_1t^{pk}\phi_1(t)\\
 &y&=& c_2t^{qk}\phi_2(t)
 \end{matrix}
 \right.$$
 Define
 $$\textit{S}(\overline{ \phi}) (t)=\left \{
\begin{matrix}
 &x&=& \frac{c_2^{q'}}{c_1^{p'}}t^{k}\phi_1(t)^{1/p} & \ \ \mod t^{n+k+1}\\
 &y&=&-\mu^{p'}+\mu^{p'} \frac{\phi_2(t)}{\phi_1(t)^q/p} &\ \ \mod t^{n+1}
 \end{matrix}
 \right.$$
 \begin{lemma}
 The map $\textit{S}$ defines a bijection between  $\overline{X}_{n,(\nu-1)pk}$ and $\overline{X}^{\sg}_{n,(p\nu+q-1)k}$.
  \end{lemma}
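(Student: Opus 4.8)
The plan is to recognise $S$ as the inverse of the map on arcs induced by the Newton map $\sg=\sg_{(p,q,\mu)}$ itself, and then to check that both directions are well defined at the prescribed truncation orders and that they match the two defining conditions. Recall that $\sg$ is given by $x=\mu^{q'}x_1^p$, $y=x_1^q(y_1+\mu^{p'})$ with $pp'-qq'=1$; it carries an arc $\overline{\phi}_1(t)=(x_1(t),y_1(t))$ to the arc $T(\overline{\phi}_1)(t)=\big(\mu^{q'}x_1(t)^p,\ x_1(t)^q(y_1(t)+\mu^{p'})\big)$. First I would note that, since $\bc$ has characteristic zero, $u\mapsto u^p$ is a bijection of $1+t\bc[[t]]$ and of each of its finite truncations, so the fractional powers $\phi_1(t)^{1/p}$ and $\phi_1(t)^{q/p}$ occurring in the definition of $S$ are meaningful. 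Writing $x=c_1t^{pk}\phi_1(t)$ and $y=c_2t^{qk}\phi_2(t)$ with $\phi_1(0)=\phi_2(0)=1$, solving $x=\mu^{q'}x_1^p$ for $x_1$ (the unique $p$-th root with prescribed leading coefficient) and $y=x_1^q(y_1+\mu^{p'})$ for $y_1$, and using the relation $c_2^{\,p}=\mu c_1^{\,q}$ singled out in the present case together with $pp'-qq'=1$, one recovers exactly the formulas defining $S$; so $S=T^{-1}$ as formal operations.

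Next I would carry out the bookkeeping of truncation orders, which is the substantive part. The orders $n+pk$ for $x$, $n+qk$ for $y$, $n+k$ for $x_1$ and $n$ for $y_1$ are chosen precisely so that $T$ and $S$ restrict to mutually inverse bijections between the underlying jet spaces. Indeed, if $x_1$ is a jet of order $n+k$ with $\ord_t x_1=k$, then its degree-$(n+k)$ coefficient first affects $x_1^p$ in degree $(p-1)k+(n+k)=n+pk$, so $\mu^{q'}x_1^p$ is a well-defined jet of order $n+pk$ with leading term of order $pk$; and if $y_1$ is a jet of order $n$ with zero constant term, then $y_1+\mu^{p'}$ has constant term $\mu^{p'}\neq 0$, so $x_1^q(y_1+\mu^{p'})$ is a well-defined jet of order $n+qk$ with leading term of order $qk$. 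Conversely, from such $x$ and $y$ one recovers $x_1$ to order $n+k$ as the unique $p$-th root with prescribed leading coefficient, and then $y_1$ to order $n$ by dividing $y$ by $x_1^q$ and subtracting $\mu^{p'}$; these operations are inverse to the previous ones, again because $p\neq 0$ in $\bc$. On leading coefficients, writing $c$ for the leading coefficient of $x_1$, the arc $T(\overline{\phi}_1)$ has $x$-leading coefficient $c_1=\mu^{q'}c^{\,p}$ and $y$-leading coefficient $c_2=\mu^{p'}c^{\,q}$, so $c_2^{\,p}/c_1^{\,q}=\mu^{pp'-qq'}=\mu$, i.e. the leading terms lie on the branch cut out by the relevant factor of the face polynomial; dually, $y_1+\mu^{p'}$ having constant term $\mu^{p'}$ forces $y_1$ to have zero constant term, which is the shape of the arcs in $\overline{X}^{\sg}$.

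Finally I would transport the two defining conditions. For the ideal this is immediate: since $\sg$ is a ring homomorphism, $h\big(T(\overline{\phi}_1)(t)\big)=(\sg h)\big(\overline{\phi}_1(t)\big)$ for every $h\in\bc[[x,y]]$, hence $\ord_{T(\overline{\phi}_1)}\Ic=\ord_{\overline{\phi}_1}\sg(\Ic)=\ord_{\overline{\phi}_1}\Ic_{\sg}$, so the index $n$ is preserved. For the differential, the Jacobian of $\sg$ is $\partial(x,y)/\partial(x_1,y_1)=p\,\mu^{q'}x_1^{p+q-1}$, whence
$$\sg^{*}\og=\sg^{*}\!\big(x^{\nu-1}\,dx\wedge dy\big)=p\,\mu^{q'\nu}\,x_1^{p\nu+q-1}\,dx_1\wedge dy_1,$$
a nonzero constant times $\og_1=x_1^{p\nu+q-1}\,dx_1\wedge dy_1$; combined with the change-of-variables Jacobian this is exactly why the weight $\bl^{-(\nu-1)pk}$ on $\overline{X}_{n,(\nu-1)pk}$ corresponds to the weight $\bl^{-(p\nu+q-1)k}$ on $\overline{X}^{\sg}_{n,(p\nu+q-1)k}$, and why $\og_1$ is the right twisting form after the Newton map. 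Assembling these observations yields that $T$ and $S$ are mutually inverse bijections between the two sets. The main obstacle is the second step: one has to be sure that passing through $x_1\mapsto x_1^p$ and through division by $x_1^q$ neither loses nor creates any coefficient, i.e. that $T$ and $S$ really are bijections at the finite truncation level, and this is guaranteed precisely by the choice of truncation orders together with characteristic zero (for the unique $p$-th root).
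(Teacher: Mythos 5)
Your proposal is correct and takes essentially the same route as the paper: the paper's proof likewise exhibits the inverse of $S$ as the truncated arc-space substitution induced by the Newton map $\sg_{(p,q,\mu)}$ itself, with the leading-coefficient relation forced by the case condition pinning down the compatible $p$-th root. Your extra bookkeeping of the truncation orders $n+pk,\ n+qk,\ n+k,\ n$ and of the transport of $\ord_{\phi}\Ic$ and of $\og$ merely makes explicit what the paper leaves to the reader.
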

 \begin{proof}
 First $\textit{S}(\overline{ \phi}) \in \overline{X}^{\sg}_{n,(p\nu+q-1)k}$. Let's consider
 $$\overline{ \phi}_1 (t)=\left \{
\begin{matrix}
 &x_1&=& c_1t^{k}+\cdots+a_nt^n+a_{n+1}t^{n+1}+\cdots+a_{n+k}t^{n+k}\\
 &y_1&=& c_2t+\cdots+b_nt^n
 \end{matrix}
 \right.$$
  such that $\text{ord}_{\overline{\phi}_1}\Ic_{\sg} =n, \text{ord}( \og_1 \circ \overline{\phi}_1)=(p\nu+q-1)k$. Let $x_1=c_1t^k\phi (t)$. Define
  $$\overline{ \phi} (t)=\left \{
\begin{matrix}
 &x&=& c_1^p\mu^{q'}t^{pk}\phi(t)^p &\ \ \mod t^{n+pk+1}\\
 &y&=& c_1^qt^{qk}\phi(t)^q(y_1-\mu^{p'})&\ \ \mod t^{n+qk+1}
 \end{matrix}
 \right.$$
 Then $\overline{ \phi} \in \overline{X}_{n,(\nu-1)pk}$.
 \end{proof}

 Now let
 Denote by $X_{n,(\nu-1)pk}$ the set of arcs such that
$$ \phi (t)=\left \{
\begin{matrix}
 &x&=& c_1t^{pk}+\cdots+a_nt^n\\
 &y&=& c_2t^{qk}+\cdots+b_nt^n
 \end{matrix}
 \right.$$
 and $\text{ord}_{\overline{\phi}}\Ic =n, \text{ord}( \og \circ \overline{\phi})=(\nu-1)pk$

 Denote by $X^{\sg}_{n,(p\nu+q-1)k}$ the set of arcs such that
$$ \phi_1 (t)=\left \{
\begin{matrix}
 &x_1&=& c_1t^{k}+\cdots+a_nt^n\\
 &y_1&=& c_2t+\cdots+b_nt^n
 \end{matrix}
 \right.$$
 and $\text{ord}_{\overline{\phi}_1}\Ic_{\sg} =n, \text{ord}( \og_1 \circ \overline{\phi}_1)=(p\nu+q-1)k$.

 We have
 $$\sum_{n\geq 1}\sum_{k\geq 1}[X_{n,(\nu-1)pk}]\bl^{-(\nu-1)pk}\bl^{-2n}T^n=$$
 $$\sum_{n\geq 1}\sum_{k\geq 1}[\overline{X}_{n,(\nu-1)pk}]\bl^{-qk} \bl^{-\nu pk}\bl^{-2n}T^n=$$
 $$\sum_{n\geq 1}\sum_{k\geq 1}[ \overline{X}^{\sg}_{n,(p\nu+q-1)k}]\bl^{-(q+p\nu)}\bl^{-2n}T^n=$$
  $$\sum_{n\geq 1}\sum_{k\geq 1}[ X^{\sg}_{n,(p\nu+q-1)k}]\bl^{-(p\nu+q-1)k}\bl^{-2n}T^n$$
    using the change of variable formula.

    Finally,
  $$\sum_{n\geq 1}\sum_{k\geq 1}[ X^{\sg}_{n,(p\nu+q-1)k}]\bl^{-(p\nu+q-1)k}\bl^{-2n}T^n=
  \sum_{n\geq1}(\sum_{m}\mu(V_{n,m}^{\sg})\bl^{-m})T^n$$
  where $V_{n,m}^{\sg}$ is the set of arcs $\phi \in \cl_0(\bc^2)$ such that
  $\text{ord}_{\phi}\sg(\Ic)=n$ and $\text{ord} (\og_1\circ \phi)=m$.
  Then
  $$ \sum_{n\geq1}(\sum_{m}\mu(V_{n,m}^{\sg})\bl^{-m})T^n=\zg(\sg(\Ic),\og_1)(T)$$
  As the Zeta-function does not depend on the system of coordinates,
 $$\zg(\sg(\Ic),\og_1)(T)=\zg(\Ic_{\sg},\og_1)(T)$$
 \end{enumerate}
 \end{enumerate}
 We have proven

 \begin{theorem}
   $$\zg(f,\og)(T)=\sum_{\Delta} (\bl-1)^2 D_{\Dg}  \frac{1}{(1-\bl^{-(p_1\nu+q_1)}T^{N_1})(1-\bl^{-(p_2\nu+q_2)}T^{N_2})}+$$
 $$  \sum_L (\bl-1)(\bl-r-1)\frac{\bl ^{-(p\nu+q)}T^N}{1-\bl^{- (p\nu +q)}T^N}+$$
 $$\sum_L\sum_{\mu}\zg(\Ic_{\sg},\og_1)(T)$$
 \end{theorem}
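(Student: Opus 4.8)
\emph{Proof strategy.} The plan is to prove the formula by induction on the depth $d(\Ic)$, uniformly in the weight $\nu$ (equivalently, in the gauge form $\og=x^{\nu-1}dx\wedge dy$). The base case is a principal monomial ideal $\Ic=(x^{N_1}y^{N_2})$, for which $\zg_{DL}(\Ic,\og)(T)$ was already computed explicitly above by stratifying $\cl_0(\bc^2)$ according to the leading exponents $(k_1,k_2)=(\ord_\phi x,\ord_\phi y)$ of an arc $\phi(t)=(c_1t^{k_1}+\cdots,\,c_2t^{k_2}+\cdots)$ with $c_1,c_2\in\bc^*$. For the inductive step I would fix $\Ic=(x^N)\Ic'$ of positive depth; its Newton polygon $\nc(\Ic)$ (via the dual polygon) induces a decomposition of $\br_+^2$ into the open cones $\Dg$ spanned by consecutive primitive dual directions $(p_i,q_i)$, together with the rays $L$ spanned by the $(p,q)$ of the faces, and every arc $\phi$ as above lies in exactly one of these strata according to the position of $(k_1,k_2)$. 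I would then compute $\zg_{DL}(\Ic,\og)(T)$ as the sum of the contributions of the three kinds of strata, matching the three sums in the statement; since $d(\Ic_\sg)<d(\Ic)$ for every Newton map $\sg$ occurring, the recursion terminates (at monomial ideals).

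For $(k_1,k_2)$ in the interior of a cone $\Dg=\langle(p_1,q_1),(p_2,q_2)\rangle$ with associated vertex $(a,b)$ of $\nc(\Ic)$ (the intersection of the faces with equations $p_i\ag+q_i\bg=N_i$), one has $\ord_\phi\Ic=k_1a+k_2b$ and $\ord(\og\circ\phi)=(\nu-1)k_1$, both independent of the higher coefficients of $\phi$; writing each lattice point of $\Dg$ uniquely as an element of the fundamental set $\pc_\Dg$ plus a nonnegative integral combination of $(p_1,q_1)$ and $(p_2,q_2)$ and summing the resulting geometric series gives $(\bl-1)^2D_\Dg\big[(1-\bl^{-(p_1\nu+q_1)}T^{N_1})(1-\bl^{-(p_2\nu+q_2)}T^{N_2})\big]^{-1}$. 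For $(k_1,k_2)=(pk,qk)$ on a ray $L$ I would split according to whether $c_1^q-\mu c_2^p\neq0$ for all roots $\mu$ of the face polynomial $F_{l_S}$: in that transverse case $\ord_\phi\Ic=Nk$ and $\ord(\og\circ\phi)=p(\nu-1)k$, the admissible $(c_1,c_2)$ form a variety of class $(\bl-1)(\bl-r-1)$ with $r$ the number of distinct roots of $F_{l_S}$, and summing over $k$ yields the middle term.

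The remaining case — $(k_1,k_2)=(pk,qk)$ on a ray $L$ with $c_1^q-\mu c_2^p=0$ for some root $\mu$ of $F_{l_S}$ — is where the induction enters, and is the technical core. Here the plan is to introduce, for each $k$, the truncated arc spaces $\overline X_{n,(\nu-1)pk}$ of arcs with leading exponents $(pk,qk)$, prescribed orders, and $pk$ (resp. $qk$) extra free coefficients, and to show that the explicit map $S$ attached to the Newton map $\sg=\sg_{(p,q,\mu)}$ — divide off $t^{pk}$ resp. $t^{qk}$, take a $p$-th root of the $x$-part, and form the quotient of the $y$-part by the $(q/p)$-th power of the $x$-part — is a bijection onto the analogous space $\overline X^{\sg}_{n,(p\nu+q-1)k}$ for $\Ic_\sg$, with inverse given by the defining substitution of $\sg$. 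Transporting this bijection back to the untruncated spaces $X_{n,(\nu-1)pk}$ and $X^{\sg}_{n,(p\nu+q-1)k}$ uses the change of variables formula for motivic integrals: the Jacobian of $\sg$ has $x_1$-order $p+q-1$, so together with the weight $x^{\nu-1}$ the gauge form $\og$ becomes exactly $\og_1=x_1^{p\nu+q-1}dx_1\wedge dy_1$, which explains both the $\bl^{-(p\nu+q-1)k}$ that appears and why the recursion carries the differential $\og_1$. Collecting the identities, $\sum_{n,k}[X_{n,(\nu-1)pk}]\bl^{-(\nu-1)pk-2n}T^n=\sum_{n,k}[X^{\sg}_{n,(p\nu+q-1)k}]\bl^{-(p\nu+q-1)k-2n}T^n=\zg(\sg(\Ic),\og_1)(T)$, which by coordinate-invariance of $\zg_{DL}$ equals $\zg(\Ic_\sg,\og_1)(T)$; summing over the rays $L$ and the roots $\mu$ produces the third term and completes the induction.

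I expect the main obstacle to be the bijection $S$ together with the bookkeeping of truncation levels: one must verify that $S$ and its inverse are well-defined morphisms of the truncated arc spaces (so that the $p$-th root and the division $\phi_2/\phi_1^{q/p}$ make sense at the prescribed orders), that $\ord_\phi\Ic$ and $\ord(\og\circ\phi)$ transform as claimed, and that the numbers of free coefficients match on the two sides so that no spurious power of $\bl$ is introduced when passing between $\overline X$ and $X$. Two routine but necessary side points are: that the inner sum over $m$ in the definition of $\zg_{DL}$ is finite — which uses the hypothesis $\nu=1$ when $N=0$ (and $\ord_\phi x\le n/N$ when $N\ge1$) — and the invariance $\zg(\sg(\Ic),\og_1)=\zg(\Ic_\sg,\og_1)$ of $\zg_{DL}$ under the coordinate change hidden in the Newton map.
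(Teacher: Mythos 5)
Your proposal is correct and follows essentially the same route as the paper: induction on the depth, stratification of arcs by the leading exponents $(k_1,k_2)$ relative to the dual cone decomposition of the Newton polygon, the geometric-series computation for the open cones and the transverse ray case, and the bijection $S$ on truncated arc spaces combined with the change of variables formula to reduce the non-transverse ray case to $\zg(\Ic_\sg,\og_1)$. The obstacles you flag (well-definedness of $S$ and its inverse, matching of truncation levels and free coefficients, the transformation of $\og$ into $\og_1$) are exactly the points the paper's lemma addresses.
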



\begin{thebibliography}{99}

\bibitem{ACLM1} {\sc E. Artal, P. Cassou-Nogu\`es, I. Luengo, A. Melle Hern\'andez} \emph{Quasi-ordinary power series and their zeta functions}, {Mem. Amer. Math. Soc.} {178} (2005), vi+85.

\bibitem{ACLM2} {\sc E. Artal, P. Cassou-Nogu\`es, I. Luengo, A. Melle Hern\'andez} \emph{$\nu$-Quasi-ordinary power series: factorisation, Newton trees and resultants}, {Topology of algebraic verieties and singularities, Contemp. Math.} {538} (2011), 321-343.

\bibitem{Bi} {\sc C. Bivia-Ausina}
\emph{Joint reductions of monomial ideals and multiplicity of complex analytic maps}, {Math. Res. Lett.} {15} (2008), 389-407.

\bibitem{BFS} {\sc C. Bivia-Ausina, T. Fukui, M. Saia}
\emph{Newton filtrations, graded algebras and codimension of non-degenerate ideals}, {Math. Proc. Camb. Soc.} {133} (2002), 55-75.

\bibitem{DV} {\sc R. Debremaeker and V. Van Lierde}
\emph{The effect of quadratic transformations on degree functions}, {Contributions to Algebra and Geometry} {47} (2006), 121-135.

\bibitem{EN} {\sc D. Eisenbud and W. Neumann}
\emph{Three dimensional link theory and invariants of plane curve singularities}, {Annals of Mathematics Studies} {110} (1986), Princeton University Press.

\bibitem{KV} {\sc D. Katz and J. Validashti}
\emph{Multiplicities and Rees valuations}, {Collect. Math.} {61} (2010), 1-24.

\bibitem{Ko} {\sc A.Kouchnirenko}
\emph{Polyh\`edres de Newton et nombres de Milnor}, {Invent. Math.} {32} (1976), 1-31.

\bibitem{LT} {\sc M.Lejeune-Jalabert, B.Teissier} \emph{Cloture int\'egrale des id\'eaux et \'equisingularit\'e}, {Ann. Fac. Sci. Toulouse Math.} {17} (2008), 781-859.

\bibitem{MN} {\sc J.D. McNeal and A. N\'emethi}
\emph{The order of contact of a holomorphic ideal in $\bc ^2$}, {Math. Z.} {250} (2005), 873-883.

\bibitem{P} {\sc A. Ploski }
\emph{Semicontinuity of the Lojasiewicz exponent}, ArXiv 1102.4730.

\bibitem{Re} {\sc D. Rees}
\emph{Generalizations of reductions and mixed multiplicities}, {J. Lond. Math. Soc.} {29} (1984), 397-414.

\bibitem{RS} {\sc D. Rees and R. Sharp}
\emph{On a theorem of B. Teissier on multiplicities of ideals in local rings}, {J. Lond. Math. Soc.} {18} (1978), 449-463.

\bibitem{SH} {\sc I. Swanson and C. Huneke}
\emph{Integral closure of ideals, rings, and modules}, {Lond. Math. Soc. L.N.S.} {336} (2006).

\bibitem{S} {\sc M. Saia }
\emph{The integral closure of ideals and the Newton filtration}, {J. Algebraic Geometry} {5} (1996), 1-11.

\bibitem{VV} {\sc L. Van Proeyen and W. Veys}
\emph{The monodromy conjecture for zeta functions associated to ideals in dimension two}, {Ann. Inst. Fourier} {60} (2010), 1347-1362.

\bibitem{VZ} {\sc  W. Veys and W. Zuniga-Galindo}
\emph{Zeta functions for analytic mappings, log-principalization of ideals, and Newton polyhedra}, {Trans. Amer. Math. Soc.} {360} (2008), 2205-2227.

\bibitem{Wall} {\sc C.T.C Wall}
\emph{Singular points on plane curves}, {London Mathematical Society Student Texts} {63} (2004), Cambridge University Press.

\end{thebibliography}

\begin{thebibliography}{99}

\bibitem{B} {\sc C. Bivia-Ausina}
\emph{Joint reductions of monomial ideals and multiplicity of complex analytic maps}, {Math. Res. Lett.} {15} (2008), 389-407.

\bibitem{BFS} {\sc C. Bivia-Ausina, T. Fukui, M. Saia}
\emph{Newton filtrations, graded algebras and codimension of non-degenerate ideals}, {Math. Proc. Camb. Soc.} {133} (2002), 55-75.

\bibitem{b} {\sc }
\emph{}, {} {} (), -.

\bibitem{b} {\sc R. Debremaeker and V. Van Lierde}
\emph{The effect of quadratic transformations on degree functions}, {Contributions to Algebra and Geometry} {47} (2006), 121-135.

\bibitem{b} {\sc J.D. McNeal and A. Nemethi }
\emph{The order of contact of a holomorphic ideal in $\bc ^2$}, {Math. Z.} {250} (2005), 873-883.

\bibitem{b} {\sc D. Rees}
\emph{Generalizations of reductions and mixed multiplicities}, {J. Lond. Math. Soc.} {29} (1984), 397-414.

\bibitem{b} {\sc D. Rees and R. Sharp}
\emph{On a theorem of B. Teissier on multiplicities of ideals in local rings}, {J. Lond. Math. Soc.} {18} (1978), 449-463.

\bibitem{b} {\sc I. Swanson and C. Huneke}
\emph{Integral closure of ideals, rings, and modules}, {Lond. Math. Soc. L.N.S.} {336} (2006).

\bibitem{b} {\sc }
\emph{}, {} {} (), -.

\bibitem{VV} {\sc L. Van Proeyen and W. Veys}
\emph{}, {} {} (), -.

\bibitem{Wall} {\sc C.T.C Wall}
\emph{Singular points on plane curves}, {London Mathematical Society Students Texts} {63} (2004), Cambridge University Press.



\end{thebibliography}
\end{document}